\documentclass{amsart}

\usepackage{amsmath,amsfonts, mathtools, amsthm, relsize, commath, bbm,
	mathrsfs}

\usepackage[svgnames]{xcolor}

\usepackage{url}
\makeatletter
\renewcommand{\@setdate}{%
  \begin{flushleft}
    \footnotesize \@date
  \end{flushleft}
}\makeatother
\newtheorem{theorem}{Theorem}[subsection]
\newtheorem{lemma}{Lemma}[subsection]
\newtheorem{prop}{Proposition}[subsection]
\newtheorem{corollary}{Corollary}[subsection]
\newtheorem{definition}{Definition}[subsection]
\newtheorem{example}{Example}[subsection]

\newcommand{\unitcircle}{\mathbb{S}^1}
\newcommand{\image}[2]{#1\left(#2\right)}
\newcommand{\invimage}[2]{\image{#1^{-1}}{#2}}
\newcommand{\inv}[1]{#1^{-1}}
\newcommand{\Chi}{\mathlarger{\chi}}
\newcommand{\seq}[3]{\left(#1_#2\right)_{#2 \in #3}}
\newcommand{\dual}[1]{\widehat{#1}}
\newcommand{\orth}[1]{#1^{\bot}}
\newcommand{\closure}[1]{\overline{#1}}

\usepackage{hyperref}
\hypersetup{colorlinks=true,urlcolor=blue,linkcolor=red,citecolor=green}

\title[Locally compact modules over localizations of the integers]{Structure Theorems for locally compact modules over localizations of the integers}
\author{Pedro Lourenço}
\date{February 2026}
\address{Department of Mathematics, University of Porto, Portugal}
\email{pemane2002@gmail.com}

\begin{document}

\thispagestyle{empty}
\addtocounter{page}{-1}

\begin{abstract}
	Given a multiplicatively closed subset $S$ of the integers, there exist Structure Theorems for $LC$ modules over the localization $\mathbb{Z}\inv{S}$ (their category is denoted $LC_{\mathbb{Z}\inv{S}}M$) that are "similar" to those of $LCA$ groups.\\
	The most notable one is the 1st Theorem: Given $M \in LC_{\mathbb{Z}\inv{S}}M$, there exists a unique set of prime numbers $\Sigma$ (purely dependent on $S$) for which  $
		M \cong \mathbb{R}^n \times \sideset{}{'}\prod_{q \in \Sigma} \mathbb{Q}_p^{n_p} \times N$,
	where $(n, (n_p)_{p \in \Sigma})$ is a sequence of nonnegative integers and $N$ contains a compact open submodule $K$ such that $K/K_0$ is a topological module over $\prod_{ q \in \mathbb{P}\setminus{\Sigma}} \mathbb{Z}_q$.
	\noindent Just like for $LCA$ groups, it is also possible to characterize the locally compact, compactly generated modules over $\mathbb{Z}\inv{S}$, as well as their Pontryagin Duals (which then allows to conclude that any locally compact $\mathbb{Z}\inv{S}$-module is an inverse limit of modules within a specific family). These characterizations are given in the 2nd and 3rd Structure Theorems respectively.
	\noindent Furthermore, as an elementary consequence of the 1st Structure Theorem, one can obtain a full classification of locally compact vector spaces over $\mathbb{Q}$.

\end{abstract}

\maketitle

\clearpage
{
	\hypersetup{linkcolor=black}
	\tableofcontents
}

\clearpage

\pagenumbering{arabic}

\section{Introduction}
\label{introduction}

In a general algebraic setting, if $R$ is a commutative ring, $S$ a multiplicatively closed subset of $R$ and $G$ an abelian group, there is a bijection

\[
	\left\{
	\begin{aligned}
		 & R\inv{S}-\text{module}   \\
		 & \text{ structures on } G
	\end{aligned}
	\right\}
	\longleftrightarrow
	\left\{
	\begin{aligned}
		 & R-\text{module structures on } G                               \\
		 & \text{that are } S-\text{divisible and } S-\text{torsion-free}
	\end{aligned}
	\right\}
\]

However, in the case of $\mathbb{Z}$-modules, the structure is always unique. This means that an abelian group admits a $\mathbb{Z}\inv{S}$-module structure if and only if it is $S$-divisible and $S$-torsion-free, which is a completely intrinsic characterization. \\
Furthemore, if an abelian group $A$ is both a $R$-module and a $\mathbb{Z}\inv{S}$-module at the same time, it is automatically a bimodule (this is for example helpful in Lemma \ref{compactly generated elliptic and totally disconnected module} and Proposition \ref{guaranteed sigma compactness}). Another useful feature is that any abelian group homomorphism between $\mathbb{Z}\inv{S}$-modules is automatically $\mathbb{Z}\inv{S}$-linear.  \\
Within the context of $LCA$ groups, things complicate slightly: Being a $\mathbb{Z}\inv{S}$-module does not seem to guarantee that the scalar multiplication is continuous. Whether this happens or not can be summarized in the following way: \begin{itemize}
	\item Given a $LCA$ group $G$ that is $S$-divisible and $S$-torsion-free, it is sufficient to check if the map \begin{align*}
		      S \times G & \to G                \\
		      (s, g)     & \mapsto \frac{1}{s}g
	      \end{align*}
	      is continuous.
	\item Because $S$ is discrete, the above is equivalent to saying that for each $s \in S$, $g \mapsto \frac{1}{s}g$ is continuous, which is the same as $g \mapsto sg$ being an automorphism of $G$ (because of local compactness and Hausdorffness of $G$, this is also equivalent to saying that it is a proper map).  \\
\end{itemize}

If $\mu$ is a Haar measure on $G$ and $s \in S$, multiplication by $s$ being an automorphism implies that there exists some $\lambda(s) > 0$ such that $\mu(sE) = \lambda(s)\mu(E)$ for any Borel set $E \subset G$. This means that multiplication by $s$ "uniformly scales" subsets of $G$. \\
Even though it is not entirely clear what restrictions such property imposes on the topology of $G$, one sufficient condition for continuity of scalar multiplication to hold is $\sigma$-compactness, since it allows us to apply the \hyperref[Open mapping thm]{Open Mapping Theorem}. \\
Despite a lack of a satisfactory intrinsic characterization of locally compact topological $\mathbb{Z}\inv{S}$-modules, they nonetheless admit Structure Theorems (Theorems \ref{1st Structure Theorem}, \ref{2nd Structure Theorem} and \ref{3rd Structure Theorem}) which hold several similarities to those of $LCA$ groups (and in a certain sense, they impose restrictions on the structure of those groups). The objective of this article is to prove them, while setting up relevant context before doing so. \\
From here on, any time one talks about a locally compact $\mathbb{Z}\inv{S}$-module, it is implicitly assumed that the module action is continuous. All groups and modules are taken to be topological and Hausdorff throughout. \\

\noindent The body of this paper is divided as follows:
\begin{itemize}
	\item Section \ref{Preliminaries} introduces some definitions and well known but essential results throughout subsections \ref{Annihilators and closed subgroups}-\ref{LC modules over the profinite integers}. On subsection \ref{Adaptation of results and some extentions of definitions}, the notions of module of \hyperref[Definition of module of Lie type]{ Lie type } and \hyperref[Definition of NSS and compactly generated]{ NSS } property are introduced for locally compact modules over a topological ring, and some results that are typically applied to topological groups (like the Open Mapping Theorem) are adapted to suit the context of this article.
	\item On Section~\ref{Localizations of integers}, we first start by defining the "ring of Adeles" associated to a localization $\mathbb{Z}\inv{S}$ of the integers, as well as the "finite Adeles" (Definition \ref{generalized adele rings}). It is proved in Subsection \ref{Generalized Adele rings} that any locally compact module over the latter is a \hyperref[Restricted product definition]{restricted product} of finite dimensional vector spaces over $\mathbb{Q}_p$, as $p$ ranges over a family of prime numbers $\Sigma$ uniquely associated to $S$, before moving to the next subsection.
	\item	      Subsection \ref{modules over localizations of integers} is entirely dedicated to proving the Structure Theorems for $LC$ $\mathbb{Z}\inv{S}$-modules, and has a small corollary attached to it, which gives a  classification of locally compact $\mathbb{Q}$-vector spaces.
	\item Subsection \ref{Modules of Lie type} is rather small, and asks some questions about classifying $LC$ $\mathbb{Z}\inv{S}$-modules of Lie type.
\end{itemize}

\section{Preliminaries}
\label{Preliminaries}

\begin{definition}
	Let $G$ and $G'$ be topological groups. $G$ and $G'$ are said to be locally isomorphic if there exist open neighbourhoods $U \subset G$ and $V \subset G'$ of $e_G$ and $e_{G'}$ respectively, and a homeomorphism $f\colon U \to V$ with the following properties:\begin{itemize}
		\item For any $x, y \in U$ such that $xy \in U$, $f(xy) = f(x)f(y)$.
		\item For any $x \in U$ such that $\inv{x} \in U$, $f(\inv{x}) = \inv{f(x)}$.
		\item Furthermore, if $G$ and $G'$ are abelian and also left topological modules over a topological ring $R$, it is required that whenever $x \in U$ and $r \in R$ are such that $rx \in U$, $f(rx) = rf(x)$.
	\end{itemize}

	In particular, a $LCA$-group $G$ is said to be of Lie type if it is locally isomorphic to $(\mathbb{R}^n, +)$, for some $n \in \mathbb{N}_0$.
\end{definition}

\begin{theorem}[Structure theorems]
	Let $G \in LCA$.
	\begin{enumerate}
		\item $G$ is isomorphic to $\mathbb{R}^n \times H$, for some $n \ge 0$ and $H$ containing a compact open subgroup.
		\item $G$ is compactly generated if and only if it is isomorphic to $\mathbb{R}^n \times \mathbb{Z}^k \times K$, for some $n, k \ge 0$ and $K$ a compact abelian group.
		\item $G$ is of Lie type if and only if it is isomorphic to $\mathbb{R}^n \times (\unitcircle)^k \times D$, where $n, k \ge 0$ and $D$ is discrete.
	\end{enumerate}
\end{theorem}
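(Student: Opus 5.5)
We derive all three parts from two classical inputs, Pontryagin duality and the van Kampen/Weil structure of compactly generated $LCA$ groups, and prove them in the order (2), (1), (3).

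\textbf{Part (2).} Suppose $G$ is compactly generated, generated by a compact symmetric neighbourhood $V$ of $0$. First I would show that $G$ contains a compact subgroup $K$ with $G/K$ a Lie group; this is the Gleason--Yamabe/Peter--Weyl step. Concretely, the characters separate points, so finitely many characters are injective on $V$ modulo a small compact subgroup. Then $G/K$ is a compactly generated abelian Lie group, hence $\cong \mathbb{R}^a \times \mathbb{Z}^b \times (\unitcircle)^c \times F$ with $F$ finite. To split, I would pull back $\mathbb{R}^a \times \mathbb{Z}^b$: the $\mathbb{Z}^b$ part lifts because it is free, and the $\mathbb{R}^a$ part lifts via one-parameter subgroups. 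The preimage of $(\unitcircle)^c \times F$ is compact, and these pieces give $G \cong \mathbb{R}^n \times \mathbb{Z}^k \times K'$. For the converse, such a product is clearly compactly generated.

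\textbf{Part (1).} Take any compact symmetric neighbourhood $V$ and let $G_1 = \langle V \rangle$. This is an open, hence closed, compactly generated subgroup, so by (2), $G_1 \cong \mathbb{R}^n \times \mathbb{Z}^k \times K$. Since $\mathbb{R}^n$ is divisible and $G_1$ is open in $G$, it splits off topologically. Concretely, I would dualize: the restriction $\dual{G} \to \dual{\mathbb{R}^n}$ is surjective and open, and it splits because $\mathbb{R}^n$ is a divisible vector group and injective in $LCA$. This gives $G \cong \mathbb{R}^n \times H$. Inside $H$, the subgroup $\mathbb{Z}^k \times K$ is open in $H\cap G_1$, so $K$ is a compact open subgroup of $H$.

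\textbf{Part (3).} If $G \cong \mathbb{R}^n \times (\unitcircle)^k \times D$, then the identity neighbourhood is locally $\mathbb{R}^{n+k}$, so $G$ is of Lie type. Conversely, if $G$ is of Lie type, it has no small subgroups, so in (1) the compact open subgroup $K \subset H$ is a compact NSS group. By duality, $\dual{K}$ is a finitely generated discrete group, so $K \cong (\unitcircle)^k \times F$. The torus $(\unitcircle)^k$ is divisible and open in $H$ modulo finitely many components, so it splits as a direct factor, with $D = H/(\unitcircle)^k$ discrete. The finite part $F$ is absorbed into $D$.

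The \textbf{main obstacle} is the splitting in (1) and (3), that is, showing that divisible open pieces ($\mathbb{R}^n$ and tori) are topological direct factors. I would handle this uniformly through injectivity of $\mathbb{R}^n$ and $\unitcircle$ in $LCA$, proved via duality and the extension of characters from open subgroups.
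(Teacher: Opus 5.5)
\textbf{There is a genuine gap in (2), and (2) is where all the difficulty lies.} You list ``the van Kampen/Weil structure of compactly generated $LCA$ groups'' among your inputs, but that \emph{is} statement (2). The argument you actually sketch delegates it to Gleason--Yamabe, and the ``concrete'' mechanism you give does not deliver what you need. A compactness argument on $V$ with point-separating characters yields finitely many $\chi_1,\dots,\chi_m$ and a small compact subgroup $K_1=V\cap\bigcap_i\ker\chi_i$ such that $G/K_1$ has no small subgroups. It yields nothing more: the induced map $G/K_1\to(\unitcircle)^m$ is in general neither injective nor an embedding onto a closed subgroup. Already $t\mapsto(e^{2\pi i t},e^{2\pi i\alpha t})$ with $\alpha\notin\mathbb{Q}$ has dense image, so nothing certifies that $G/K_1$ is a Lie group.

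Getting from ``no small subgroups'' to ``$\mathbb{R}^a\times\mathbb{Z}^b\times(\unitcircle)^c\times F$'' is precisely the content of (2) and (3). The classification of compactly generated abelian Lie groups you quote is (2) for Lie groups. Your own proof of (3) cannot close the loop, because it uses (1), which uses (2). The lift of $\mathbb{R}^a$ ``via one-parameter subgroups'' is also unjustified as stated, though it can be repaired by duality. If $E$ is the preimage of $\mathbb{R}^a$, then $A(K)\cong\dual{E/K}\cong\mathbb{R}^a$ is an open divisible subgroup of $\dual{E}$. It therefore splits off with discrete complement (Corollary~\ref{Open injective submodule}), and dualizing gives $E\cong\mathbb{R}^a\times K$ compatibly with $E\to E/K$.

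The paper gives no proof of this statement and defers to Chapter 4 of \cite{DeitmarEchterhoff2014}. The elementary duality argument there supplies the missing idea, a lattice lemma: a compactly generated $G$ contains a discrete subgroup $\Lambda\cong\mathbb{Z}^k$ with $G/\Lambda$ compact. To prove it, pick a finite $F$ with $V+V\subset F+V$, so $G=\langle F\rangle+V$. Each $g\in F$ has either $\closure{\langle g\rangle}$ compact or $\langle g\rangle\cong\mathbb{Z}$ discrete. If all are of the first kind, $G$ is compact; otherwise pass to $G/\langle g\rangle$, induct on $|F|$, and pull back. Then $A(\Lambda)\cong\dual{G/\Lambda}$ is discrete in $\dual{G}$ and $\dual{G}/A(\Lambda)\cong\dual{\Lambda}\cong(\unitcircle)^k$, so $\dual{G}$ is of Lie type.

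So the order is (3), (2), (1):
\begin{enumerate}
\item Prove (3) by extending a local isomorphism $\mathbb{R}^n\supset U\to G$ to a global homomorphism with discrete kernel and open divisible image. That image is $\cong\mathbb{R}^{n-b}\times(\unitcircle)^b$ and splits off with discrete complement.
\item Get (2) by dualizing $\dual{G}\cong\mathbb{R}^a\times(\unitcircle)^b\times D$.
\item Get (1) from (2) via Lemma~\ref{lemma 4.2.14}.
\end{enumerate}

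Granting (2), your (1) is fine. What you need is simply a continuous retraction $G\to\mathbb{R}^n$: extend the projection $G_1\to\mathbb{R}^n$ algebraically using divisibility, and it is automatically continuous because $G_1$ is open. Dualizing the restriction map is a detour.

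Your (3) is correct and genuinely different from the cited route. It uses only the no-small-subgroups property, which forces the compact open subgroup from (1) to have finitely generated dual. That subgroup is then $(\unitcircle)^k\times F$, and its open divisible torus splits off. This avoids the local-isomorphism extension and even shows that NSS suffices, but it is only available after (1), hence after (2).
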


\noindent Because $\dual{\mathbb{R}^n} \cong \mathbb{R}^n$, $\dual{\mathbb{Z}^k} \cong (\unitcircle)^k$ and compact abelian groups are dual to discrete abelian groups, the second and third structure theorems allow us to conclude that compactly generated $LCA$-groups are in correspondence with $LCA$-groups of Lie type through Pontryagin Duality. \\
Chapter 4 of \cite{DeitmarEchterhoff2014} is dedicated to proving these theorems. \\

\subsection{Annihilators and closed subgroups}
\label{Annihilators and closed subgroups}

Let $G \in LCA$. Its Pontryagin Dual is denoted $\dual{G}$.

\begin{definition}
	If $S \subset G$, we define $A(S) := \{\Chi \in \dual{G}: \ \Chi|_S = 1\}$ and for $X\subset \dual{G}$, $\orth{X} := \{g \in G: \chi(g) = 1 \ \text{for all} \ \chi \in X\}$. These sets are called the annihilator of $S$ and orthogonal complement of $X$, respectively. It's easy to check that $A(S) = A(<S>) = A(\closure{<S>})$ for any $S \subset G$. \\
	One can note that for $X \subset \dual{G}$, $\orth{X} = \invimage{\eta_G}{A(X)}$, by \hyperref[Duality thm]{Pontryagin Duality}. So anything that holds for annihilators will also hold for orthogonal complements.
\end{definition}

\begin{prop}
	\label{annihilator isomorphisms}
	If $G \in LCA$ and H is a closed subgroup of G:
	\begin{enumerate}
		\item $\dual{G/H} \cong A(H)$
		\item $\orth{A(H)} = H$
		\item $\dual{G}/A(H) \cong \dual{H}$
	\end{enumerate}
\end{prop}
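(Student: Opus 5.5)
We prove the three statements in the order (1), (2), (3), using Pontryagin Duality, i.e. that $\eta_G\colon G \to \dual{\dual{G}}$ is a topological isomorphism, and the compact-open topology on duals.

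\textbf{(1).} Let $\pi\colon G \to G/H$ be the quotient map. It is open and continuous, and $G/H$ is a Hausdorff $LCA$ group because $H$ is closed. Define $\pi^*\colon \dual{G/H} \to \dual{G}$ by $\Chi \mapsto \Chi\circ\pi$.
\begin{itemize}
	\item \emph{Injectivity and image.} Since $\pi$ is surjective, $\pi^*$ is injective. A character of $G$ factors through $\pi$ exactly when it is trivial on $H$, so the image of $\pi^*$ is $A(H)$. Continuity of a factored character $\bar\Chi$ on $G/H$ follows from $\pi$ being a quotient map.
	\item \emph{Continuity.} $\pi^*$ is continuous: if $K\subset G$ is compact, then $\pi(K)$ is compact.
	\item \emph{Openness onto $A(H)$.} Here we use a standard fact for locally compact groups: every compact subset $C\subset G/H$ is contained in $\pi(K)$ for some compact $K\subset G$. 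To prove it, cover $C$ by finitely many sets $\pi(xU)$, where $U$ is a relatively compact neighbourhood of the identity, and take $K=\bigcup \overline{x_iU}$. With this, the basic neighbourhood $\{\bar\Chi : \bar\Chi(C)\subset V\}$ contains the preimage under $\pi^*$ of $\{\Chi : \Chi(K)\subset V\}\cap A(H)$.
\end{itemize}
Hence $\pi^*$ is a topological isomorphism onto $A(H)$, which is closed in $\dual{G}$.

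\textbf{(2).} The inclusion $H\subset \orth{A(H)}$ is immediate. For the reverse, take $g\notin H$. Then $\pi(g)\neq e$ in the $LCA$ group $G/H$. By the fact that characters of an $LCA$ group separate points (a consequence of Pontryagin Duality, i.e. injectivity of $\eta_{G/H}$), there is a $\bar\Chi\in\dual{G/H}$ with $\bar\Chi(\pi(g))\neq 1$. Then $\Chi=\bar\Chi\circ\pi\in A(H)$ and $\Chi(g)\neq 1$, so $g\notin\orth{A(H)}$.

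\textbf{(3).} Consider the restriction map $\rho\colon\dual{G}\to\dual{H}$, $\Chi\mapsto\Chi|_H$. It is a continuous homomorphism, since compact subsets of $H$ are compact in $G$, and its kernel is $A(H)$. It therefore remains to show that $\rho$ is surjective and open; the induced map $\dual{G}/A(H)\to\dual{H}$ is then a topological isomorphism.

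We obtain this by dualizing. By (1) applied to the closed subgroup $L:=A(H)$ of $\dual{G}$, we get $\dual{\dual{G}/L}\cong A_{\dual{G}}(L)$. Through $\eta_G$, the group $A_{\dual{G}}(L)$ corresponds to $\orth{L}$, and $\orth{L}=H$ by (2). Chasing the maps, the dual of the injective continuous map $\bar\rho\colon\dual{G}/L\to\dual{H}$ is identified, via $\eta$, with the identity $H\to H$. Since $\eta_H$ and $\eta_{\dual{G}/L}$ are isomorphisms, $\bar\rho$ is itself an isomorphism, namely the double dual of the identity up to these identifications.

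The main obstacle is making this last identification rigorous. One must check that the square relating $\bar\rho$, $\dual{\bar\rho}$, $\eta_H$ and $\eta_{\dual{G}/L}$ commutes, which is a diagram chase using the definitions. One must also confirm that a continuous homomorphism whose dual is an isomorphism is itself an isomorphism; this follows because dualizing is a functor and $\eta$ is a natural isomorphism. Density of the image of $\rho$ alone would not give surjectivity, and this duality argument is exactly what supplies it.
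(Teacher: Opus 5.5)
Your proof is correct and follows the paper's outline. Parts (1) and (3) come from dualizing the projection $\pi\colon G\to G/H$ and the inclusion $\iota\colon H\to G$ (the restriction map), and part (2) comes from (1) together with the fact that characters of the $LCA$ group $G/H$ separate points.

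For (3) the paper only says the isomorphism is checked ``by doing some analysis''. You instead apply (1) and (2) to the closed subgroup $A(H)\subset\widehat{G}$ and use naturality of $\eta$ to show that $\widehat{\bar\rho}$, and hence $\bar\rho$, is a topological isomorphism. This is a standard completion, and it is what supplies surjectivity of restriction, which density of the image alone would not. The diagram chase you defer does go through: $\widehat{\bar\rho}\circ\eta_H$ sends $h$ to the character $\chi A(H)\mapsto\chi(h)$, and under (1) this corresponds to $\eta_G(h)$.
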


The full proof won't be given since it is standard, but the way one arrives at $1.$ and $3.$ is by \hyperref[dual morphism]{dualizing} the projection $\pi \colon G \to G/H$ and inclusion $\iota \colon H \to G$ respectively (and then checking the respective isomorphisms by doing some analysis). $2.$ is a consequence of $1.$ along with the fact that characters of $LCA$ groups separate their elements.

\begin{corollary}
	\label{subgroup - annihilator topological relations}
	If $H \le G$ is closed, then
	\begin{itemize}
		\item H is compact if and only if A(H) is open (co-discrete).
		\item H is co-compact if and only if A(H) is discrete
	\end{itemize}
	This is concluded by the fact that compact abelian groups are dual to discrete abelian groups, along with the above Proposition.
\end{corollary}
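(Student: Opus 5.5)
The plan is to deduce both equivalences from Proposition \ref{annihilator isomorphisms} and the duality between compact and discrete abelian groups. The idea is to identify $H$ and $G/H$ with duals of $\dual{G}/A(H)$ and $A(H)$ respectively, and then read off compactness and discreteness.

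For the first item, I use Proposition \ref{annihilator isomorphisms}(3), $\dual{G}/A(H) \cong \dual{H}$. Pontryagin duality states that $H$ is compact if and only if $\dual{H}$ is discrete. Since the isomorphism is topological, this holds if and only if the quotient $\dual{G}/A(H)$ is discrete. For a subgroup $A$ of a topological group, the quotient $\dual{G}/A$ is discrete exactly when $A$ is open, because the quotient map is open and the preimage of the identity coset is $A$. Hence $H$ is compact if and only if $A(H)$ is open, i.e.\ co-discrete. The only care needed is that the isomorphism in (3) is a topological isomorphism and not merely algebraic. I take this as part of the proposition, since it comes from dualizing the inclusion $\iota$.

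For the second item, I apply Proposition \ref{annihilator isomorphisms}(1), $\dual{G/H} \cong A(H)$. Here "co-compact" means that $G/H$ is compact. This holds if and only if $\dual{G/H}$ is discrete, hence if and only if $A(H)$ is discrete in the subspace topology from $\dual{G}$. Here $A(H)$ is closed in $\dual{G}$, being an intersection of preimages of $1$ under the continuous evaluation maps. Again the isomorphism must be topological onto $A(H)$ with its subspace topology, which I take as part of (1).

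The main point to check is this topological (not just algebraic) nature of the isomorphisms in (1) and (3). The remainder is formal: the compact/discrete duality for $LCA$ groups, together with the elementary fact that a quotient by a subgroup is discrete iff the subgroup is open.
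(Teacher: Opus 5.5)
Your proposal is correct and is essentially the argument the paper has in mind. You combine the topological isomorphisms $\dual{G}/A(H)\cong\dual{H}$ and $\dual{G/H}\cong A(H)$ from Proposition \ref{annihilator isomorphisms} with the compact/discrete duality, together with the elementary fact that a quotient is discrete if and only if the subgroup is open.
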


\begin{lemma}
	If $\{S_i\}_{i \in I}$ is a family of subsets of G, then
	\[A(\bigcup_i S_i) = \bigcap_i A(S_i)\]
	This is easy to check by the definition of annihilator.
\end{lemma}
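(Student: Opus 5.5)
The identity follows directly from the definitions, so I will prove it by showing that membership in either side is characterized by the same condition on a character. No duality theory or topological input is needed. The statement is purely set-theoretic about restrictions of characters to subsets.

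Let $\Chi \in \dual{G}$. By the definition of annihilator, $\Chi \in A(\bigcup_i S_i)$ means $\Chi|_{\bigcup_i S_i} = 1$, i.e.\ $\Chi(g) = 1$ for every $g \in \bigcup_i S_i$. Since an element lies in the union exactly when it lies in some $S_i$, this is equivalent to requiring that for every $i \in I$ and every $g \in S_i$ we have $\Chi(g) = 1$. In other words, $\Chi|_{S_i} = 1$ for all $i \in I$. That is precisely the statement $\Chi \in A(S_i)$ for every $i$, i.e.\ $\Chi \in \bigcap_i A(S_i)$.

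Each step is an equivalence, so both inclusions hold at once and the sets coincide. The only edge case is $I = \emptyset$. There the union is empty, so its annihilator is all of $\dual{G}$, and the empty intersection (taken inside $\dual{G}$) is also $\dual{G}$, so the identity still holds. There is no real obstacle. The one point requiring care is to state the convention that intersections are taken inside $\dual{G}$.

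As a remark for later use, combining this with $A(S) = A(\closure{<S>})$ yields $A(\closure{\sum_i H_i}) = \bigcap_i A(H_i)$ for a family of closed subgroups $H_i$.
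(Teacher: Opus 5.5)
Your proof is correct and takes the same approach as the paper, which offers no argument beyond saying the identity follows from the definition of annihilator; your chain of equivalences is that check written out. Your closing remark in fact holds for arbitrary subgroups $H_i$, not only closed ones, which is how the paper's subsequent corollary states it.
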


\begin{corollary}
	\label{ann of sums}
	If $\{H_i\}_{i \in I}$ is a family of subgroups of G, then \[
		A(\closure{\sum_i H_i}) = A(\sum_i H_i) = A(<\bigcup_i H_i>) = A(\bigcup_i H_i) = \bigcap_i A(H_i)\]
	Furthermore, if they are closed, then \[
		A(\bigcap_i H_i) = \closure{\sum_i A(H_i)}
	\]
\end{corollary}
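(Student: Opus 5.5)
The chain of equalities in the first display is essentially formal. First, $A(\closure{\sum_i H_i}) = A(\sum_i H_i)$ holds because characters are continuous and $\unitcircle \setminus \{1\}$ is open. A character trivial on a set is therefore trivial on its closure; the reverse inclusion is immediate since $\sum_i H_i \subset \closure{\sum_i H_i}$. Next, $\sum_i H_i = <\bigcup_i H_i>$ as sets, because the $H_i$ are subgroups of an abelian group. The equality $A(<\bigcup_i H_i>) = A(\bigcup_i H_i)$ is the remark $A(S) = A(<S>)$ from the definition of annihilator, since characters are homomorphisms. Finally, $A(\bigcup_i H_i) = \bigcap_i A(H_i)$ is exactly the preceding Lemma. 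I would simply record these four observations in order.

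For the second statement, assume each $H_i$ is closed. The plan is to apply the first part on the dual side and then take orthogonal complements. Put $X_i := A(H_i) \subset \dual{G}$; these are closed subgroups of $\dual{G}$. The analogue of the first part for orthogonal complements (justified by $\orth{X} = \invimage{\eta_G}{A(X)}$ and Pontryagin Duality, as noted in the definition) gives
\[
\orth{\closure{\sum_i X_i}} = \bigcap_i \orth{X_i}.
\]
By Proposition \ref{annihilator isomorphisms}(2), $\orth{X_i} = \orth{A(H_i)} = H_i$, so the right-hand side is $\bigcap_i H_i$. Applying $A$ to both sides yields $A(\bigcap_i H_i) = A(\orth{Y})$ with $Y := \closure{\sum_i A(H_i)}$, a closed subgroup of $\dual{G}$.

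The remaining step, which I expect to be the only real point, is the identity $A(\orth{Y}) = Y$ for a closed subgroup $Y \le \dual{G}$. This is the dual form of Proposition \ref{annihilator isomorphisms}(2). To prove it, apply part (2) to the closed subgroup $Y$ of the $LCA$ group $\dual{G}$, with $\dual{\dual{G}}$ identified with $G$ via $\eta_G$. The annihilator of $Y$ in $\dual{\dual{G}}$ is $\eta_G(\orth{Y})$. Moreover, the orthogonal complement in $\dual{G}$ of $\eta_G(Z)$, for $Z \subset G$, is exactly $A(Z)$, since $\eta_G(g)(\Chi) = \Chi(g)$. Part (2) therefore reads $A(\orth{Y}) = Y$. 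The thing to check carefully is that $\eta_G$ really intertwines the two notions of annihilator in this way, which is a direct unwinding of the definitions. Combining this with the previous paragraph gives $A(\bigcap_i H_i) = \closure{\sum_i A(H_i)}$.
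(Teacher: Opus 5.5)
Your proposal is correct and follows the paper's argument: the first chain comes from $A(S)=A(\langle S\rangle)=A(\closure{\langle S\rangle})$ together with the preceding lemma, and the second part comes from writing $\bigcap_i H_i=\bigcap_i \orth{A(H_i)}=\orth{(\closure{\sum_i A(H_i)})}$ and then applying $A$ to both sides. The only difference is that you explicitly justify $A(\orth{Y})=Y$ for a closed subgroup $Y\le\dual{G}$, by transporting Proposition \ref{annihilator isomorphisms}(2) through $\eta_G$, whereas the paper uses this step without comment.
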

\begin{proof}

	\[
		\bigcap_i H_i = \bigcap_i \orth{A(H_i)} = \orth{(\closure{\sum_i A(H_i)})}
	\]
	by what was just seen, and because the $H_i$'s are closed. Applying annihilators to both sides, we get \[
		A(\bigcap_i H_i) = A(\orth{(\closure{\sum_i A(H_i)})}) = \closure{\sum_i A(H_i)}
	\]
\end{proof}

\subsection{The subgroup of compact elements}
\label{subgp of compact elements}
The notion of elliptic $LCA$ groups and their duality with totally disconnected $LCA$ groups is helpful throughout this article.

\begin{definition}
	If $x \in G$, x is said to be a compact element if $\closure{<x>}$ is compact. C(G) will denote the set of all compact elements of G. It is also equal to the union of all compact subgroups of G, because of Hausdorffness.
\end{definition}

\begin{theorem}
	\label{ann of connected component}
	Given $G \in LCA$,
	\[
		A(G_0) = C(\dual{G}),
	\]
	where $G_0$ is the connected component at the identity of $G$.
\end{theorem}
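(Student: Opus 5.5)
We prove the two inclusions $C(\dual{G}) \subseteq A(G_0)$ and $A(G_0) \subseteq C(\dual{G})$ separately. The first structure theorem reduces the hard direction to a group with a compact open subgroup.

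\emph{First inclusion.} Let $\chi \in C(\dual{G})$, so $K := \closure{<\chi>}$ is a compact subgroup of $\dual{G}$. By Corollary \ref{subgroup - annihilator topological relations} (applied to $\dual{G}$ and transported to $G$ through $\eta_G$), $\orth{K}$ is an open subgroup of $G$. An open subgroup is also closed, so it contains the connected set $G_0$. Hence $\chi|_{G_0} = 1$, i.e. $\chi \in A(G_0)$.

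\emph{Reverse inclusion.} Let $\chi \in A(G_0)$. Write $G \cong \mathbb{R}^n \times H$ with $H$ containing a compact open subgroup $U$. Then $G_0 = \mathbb{R}^n \times H_0$, so $\chi$ is trivial on $\mathbb{R}^n$ and factors through a character of $H$ trivial on $H_0$. Since $\dual{G} \cong \mathbb{R}^n \times \dual{H}$ and the first factor is trivial, it suffices to treat $H$.

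Now $U$ is a compact group. Its quotient $U/U_0$ is compact and totally disconnected, hence profinite. Since $\chi|_U$ is trivial on $U_0 \subseteq H_0$, it factors through a continuous character of the profinite group $U/U_0$. Such a character has finite image: the image is a compact subgroup of $\unitcircle$ that is a continuous image of a totally disconnected compact group, so it is a proper closed subgroup, hence finite. Therefore $\chi^m|_U = 1$ for some $m \ge 1$, i.e. $\chi^m \in A(U)$.

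By Corollary \ref{subgroup - annihilator topological relations}, $A(U)$ is compact because $U$ is open. Then $\closure{<\chi>}$ is contained in the finite union $\bigcup_{j=0}^{m-1} \chi^j A(U)$, which is compact. So $\chi \in C(\dual{H})$, and hence $\chi \in C(\dual{G})$.

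\emph{Expected obstacle.} The delicate step is justifying $U_0 \subseteq H_0$ and that the character of $U/U_0$ has finite image. For the image, we use that a compact totally disconnected group has a basis of open subgroups at the identity. Pulling back a small neighbourhood of $1 \in \unitcircle$ that contains no nontrivial subgroup forces the kernel of the character to be open, hence of finite index in $U$. The inclusion $U_0 \subseteq H_0$ holds simply because $U_0$ is a connected set containing the identity.
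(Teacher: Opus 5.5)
Your proof is correct. The inclusion $C(\dual{G}) \subseteq A(G_0)$ is essentially the paper's own argument: a compact subgroup of $\dual{G}$ has open orthogonal complement, and that complement must contain $G_0$.

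For the reverse inclusion you take a different route. The paper does not use the structure theorem. It passes to $G/G_0$, which is a totally disconnected LCA group. Van Dantzig's theorem, via the auxiliary lemma, then gives a compact open subgroup $X \le G/G_0$ on which the induced character is trivial. Pulling back gives an open subgroup $H=\pi^{-1}(X)$ with $\chi \in A(H)$, and compactness of $A(H)$ finishes the proof. Along the way one gets the identity $A(G_0)=\bigcup_{H \text{ open}} A(H)$.

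You instead invoke the first structure theorem to obtain a compact open subgroup $U$ of the factor $H$. You split off $\mathbb{R}^n$ through the product decomposition of the dual, and then need van Dantzig only in the compact setting of $U/U_0$. This works, but it uses a much deeper theorem to obtain something the quotient $G/G_0$ provides directly.

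Your endgame can also be shortened. The kernel $V$ of $\chi|_U$ is already an open subgroup of $H$ with $\chi \in A(V)$, and $A(V)$ is compact. So the detour through $\chi^m \in A(U)$ and the finite union of cosets of $A(U)$ is unnecessary.

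One small caveat concerns your first justification that $\chi(U)$ is finite, namely that it is ``a continuous image of a totally disconnected compact group''. That phrasing is not valid for arbitrary continuous maps, since the Cantor set maps continuously onto $\unitcircle$; the homomorphism property is essential. The open-kernel argument in your last paragraph is the rigorous version and should replace it.
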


\noindent To prove this, the following lemma is useful:
\begin{lemma}
	Let $G \in LCA$ be totally disconnected. Then for any $\psi \in \dual{G}$, there exists a compact open subgroup H of G such that $\psi(H) = {1}$.
\end{lemma}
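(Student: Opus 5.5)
The plan is to use van Dantzig's theorem. A totally disconnected locally compact group has a neighbourhood basis of the identity consisting of compact open subgroups. In the abelian case this holds for $G$ directly, since it is Hausdorff, locally compact and totally disconnected. We then combine this with the fact that the unit circle $\unitcircle$ has no small subgroups.

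Concretely, fix $\psi \in \dual{G}$. Let $V \subset \unitcircle$ be an open neighbourhood of $1$ containing no nontrivial subgroup of $\unitcircle$; for instance $V = \{z \in \unitcircle : \operatorname{Re} z > 0\}$ works. Since $\psi$ is continuous, $\invimage{\psi}{V}$ is an open neighbourhood of $e_G$. By van Dantzig's theorem there is a compact open subgroup $H$ of $G$ with $H \subset \invimage{\psi}{V}$. Then $\image{\psi}{H}$ is a subgroup of $\unitcircle$ contained in $V$, and hence $\image{\psi}{H} = \{1\}$.

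The only nontrivial inputs are these two facts, so I will justify them briefly.

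For the circle: if $z \in V$ with $z \neq 1$, write $z = e^{i\theta}$ with $0 < |\theta| < \pi/2$. Some multiple $k\theta$ then satisfies $\pi/2 \le |k\theta| \le \pi$ (modulo $2\pi$), because the multiples advance in steps smaller than $\pi/2$. So $z^k \notin V$, and the subgroup generated by $z$ leaves $V$.

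For van Dantzig's theorem: I would cite it as standard (it appears in \cite{DeitmarEchterhoff2014}). If a sketch is wanted, it goes as follows. Start with a compact open neighbourhood $W$ of $e_G$ inside a given neighbourhood; such a $W$ exists because in a locally compact totally disconnected Hausdorff space the compact open sets form a basis. By compactness of $W$ and openness of $W$, there is an open neighbourhood $U$ of $e_G$ with $U = -U$ and $W + U \subset W$. Then the subgroup generated by $U$ is contained in $W$, and it is open. Being an open subgroup it is also closed, so it is a closed subset of the compact set $W$ and hence compact.

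The main obstacle, if anything, is just the existence of the compact open basis in the totally disconnected space. That is where total disconnectedness (equivalently, zero-dimensionality for locally compact Hausdorff spaces) is used, and I would take it as a known topological fact.
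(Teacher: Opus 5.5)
Your proof is correct and follows the paper's argument: pull back a neighbourhood of $1$ in $\unitcircle$ that contains no nontrivial subgroups, fit a compact open subgroup $H$ inside the preimage using the van Dantzig result (the paper's Theorem \ref{Locally compact t.d.}), and conclude that $\psi(H)$ is trivial. Your justifications of the no-small-subgroups property of $\unitcircle$ and your sketch of van Dantzig's theorem are also correct; the paper simply takes both facts for granted.
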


\begin{proof}
	Let $U \subset \unitcircle$ an open neighbourhood of 1 that does not contain any nontrivial subgroups. Then $\invimage{\psi}{U}$ is an open neighbourhood of 0 in G. By Theorem \ref{Locally compact t.d.}, there exists a compact open subgroup H of G such that $H \subset \invimage{\psi}{U}$. \\
	$\image{\psi}{H}$ will then be a subgroup of $\unitcircle$ contained in U, therefore it is trivial.
\end{proof}

\begin{proof}[Proof of Theorem \ref{ann of connected component}]
	If $\chi \in A(G_0)$, then by the construction done in the first part of \ref{annihilator isomorphisms}, we get a character $\psi\colon G/G_0 \to \unitcircle$ given by $\psi(g + G_0) = \chi(g)$. $G/G_0$ is totally disconnected by Lemma \ref{conn.component}, so there exists some $X \le G/G_0$ compact and open such that whenever $g + G_0 \in X$, $\chi(g) = \psi(g + G_0) = 1$. Therefore, $H = \invimage{\pi}{X}$ is an open subgroup of G on which $\chi$ acts trivially. \\
	Conversely, if $\chi$ acts trivially in some open subgroup H, then since by Lemma \ref{conn.component}, $G_0$ is necessarily contained in H, it follows that $\chi \in A(G_0)$. This together with Corollary  \ref{subgroup - annihilator topological relations} implies
	\[
		A(G_0) = \bigcup_{\mathclap{\substack{H \le G \\ H open}}} A(H) = \bigcup_{\mathclap{\substack{X \le \dual{G} \\ X \ compact}}} X = C(\dual{G})\]
\end{proof}

\noindent Note that there would be no reason to assume that $C(\dual{G})$ is closed since it is an arbitrary union of closed sets. This result also implies, by Pontryagin duality, that $C(G)$ is always a closed subgroup of G.  \\

\begin{corollary}
	\label{elliptic and t.d. duality}
	For $G \in LCA$, $C(G) = G$ if and only if $\dual{G}$ is totally disconnected, and $C(G) = 0$ if and only if $\dual{G}$ is connected. \\
	Furthermore, $\dual{G/C(G)} \cong A(C(G)) = (\dual{G})_0$, which is connected, therefore $G/C(G)$ has no nontrivial compact subgroups. This can be seen as an analogue of the fact that for any discrete abelian group $D$, $D/T(D)$ is torsion-free.  \\
	In particular, if $K$ is a compact abelian group, $A(K_0) = T(\dual{K})$, the torsion subgroup of $\dual{K}$, which means that, $K$ is connected if and only if $\dual{K}$ is torsion-free and $K$ is totally disconnected if and only if $\dual{K}$ is a torsion abelian group. \\
\end{corollary}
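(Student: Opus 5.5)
The corollary is read off from Theorem \ref{ann of connected component}, $A(G_0) = C(\dual{G})$, applied to $G$ and, via Pontryagin duality, to $\dual{G}$; that is, $A((\dual{G})_0) = C(G)$ after identifying $G$ with $\dual{\dual{G}}$ through $\eta_G$. Since the orthogonal complement is $\orth{X} = \invimage{\eta_G}{A(X)}$, this identity also gives that $C(G)$ is a closed subgroup. By Proposition \ref{annihilator isomorphisms}(2) it then yields $A(C(G)) = A(\orth{(\dual{G})_0}) = (\dual{G})_0$, because $(\dual{G})_0$ is closed.

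For the first equivalence, $C(G) = G$ holds iff $A((\dual{G})_0) = \dual{\dual{G}}$ iff $(\dual{G})_0$ is trivial. The last step uses that $A(H)$ is everything only when $H = 0$: by Proposition \ref{annihilator isomorphisms}(2), $H = \orth{A(H)}$. Triviality of the connected component is exactly total disconnectedness of $\dual{G}$. Similarly, $C(G) = 0$ iff $A((\dual{G})_0) = 0$ iff $(\dual{G})_0 = \dual{G}$, i.e. $\dual{G}$ is connected. Here we use $A(\dual{G}) = 0$ and, conversely, $A(H) = 0$ implies $H = \orth{0} = \dual{G}$.

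For the second part, Proposition \ref{annihilator isomorphisms}(1) gives $\dual{G/C(G)} \cong A(C(G)) = (\dual{G})_0$, which is connected. Applying the second equivalence just proved to the group $G/C(G)$ gives $C(G/C(G)) = 0$. Since $C$ of a group is the union of its compact subgroups, $G/C(G)$ has no nontrivial compact subgroups.

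For a compact $K$, we apply Theorem \ref{ann of connected component} to $K$ itself: $A(K_0) = C(\dual{K})$. Since $\dual{K}$ is discrete, $\closure{<x>} = <x>$, and this set is compact iff it is finite, iff $x$ is torsion. Hence $C(\dual{K}) = T(\dual{K})$. Then $K$ is connected iff $K_0 = K$ iff $A(K_0) = 0$, using the same argument as above, iff $T(\dual{K}) = 0$. Likewise, $K$ is totally disconnected iff $K_0 = 0$ iff $A(K_0) = \dual{K}$ iff $\dual{K}$ is torsion.

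The only delicate point is the bookkeeping of the identification $G \cong \dual{\dual{G}}$ when transporting Theorem \ref{ann of connected component} from $\dual{G}$ back to $G$. This is handled by the stated relation $\orth{X} = \invimage{\eta_G}{A(X)}$. Everything else is formal.
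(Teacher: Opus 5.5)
Your proposal is correct and takes the same route the paper implicitly uses: you apply Theorem \ref{ann of connected component} to $\dual{G}$, transport it back through $\eta_G$ to get $C(G) = \orth{((\dual{G})_0)}$, and read off all the equivalences from the annihilator correspondence of Proposition \ref{annihilator isomorphisms}. The compact case is handled via $C(\dual{K}) = T(\dual{K})$ for discrete $\dual{K}$, and your bookkeeping of $G \cong \dual{\dual{G}}$ and of $A(\orth{X}) = X$ for closed $X \le \dual{G}$ is sound.
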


\begin{corollary}[Definition of elliptic]
	\label{union of compact open sbgps}
	$C(G) = G$ if and only if $G$ is a union of compact and open subgroups. In this situation, $G$ is called elliptic (local compactness will be assumed by default in the definition of elliptic).
\end{corollary}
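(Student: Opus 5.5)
The plan is to prove both directions directly, with local compactness doing the real work in the forward direction. Throughout, $G$ is a locally compact abelian group.

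\emph{If $G$ is a union of compact open subgroups, then $C(G)=G$.} Take $x \in G$ and choose a compact open subgroup $H$ with $x \in H$. Then $\closure{<x>} \subset H$, since $H$ is closed. A closed subset of a compact set is compact, so $x$ is a compact element.

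\emph{If $C(G)=G$, then $G$ is a union of compact open subgroups.} First I will produce one compact open subgroup. Choose a compact neighbourhood $V$ of $0$. The idea is to find an open subgroup $O$ of $G$ which is compactly generated. The subgroup generated by $V$ is open, since it contains the neighbourhood $V$ of $0$, and it is compactly generated by construction. So I may replace $G$ by $O = <V>$. Since compact elements of $O$ remain compact in $G$ and conversely, $O$ is still elliptic.

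By the 2nd Structure Theorem, $O \cong \mathbb{R}^n \times \mathbb{Z}^k \times K$ with $K$ compact. In that decomposition the only compact elements are those of $\{0\}\times\{0\}\times K$. This is because $\closure{<x>}$ for any $x$ with a nonzero $\mathbb{R}^n$ or $\mathbb{Z}^k$ component projects onto an unbounded subgroup. Ellipticity therefore forces $n=k=0$, so $O$ itself is compact. Hence $O$ is a compact open subgroup $H_0$ of $G$.

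Now let $x \in G$ be arbitrary and put $H = H_0 + \closure{<x>}$. This is a sum of two compact subgroups, hence compact: it is the image of $H_0 \times \closure{<x>}$ under addition. It is open, since it contains the open subgroup $H_0$. It contains $x$. So $G$ is the union of such $H$, which finishes the argument.

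The main obstacle is the forward direction, specifically getting the first compact open subgroup. An alternative route avoids the 2nd Structure Theorem. By Corollary \ref{elliptic and t.d. duality}, $\dual{G}$ is totally disconnected, so its compact open subgroups $X$ form a neighbourhood base (Theorem \ref{Locally compact t.d.}). By Corollary \ref{subgroup - annihilator topological relations}, the $\orth{X}$ are then compact open subgroups of $G$, and $\bigcup_X \orth{X} = \orth{(\bigcap_X X)} = \orth{\{1\}}$ by Corollary \ref{ann of sums}. Since the $X$ form a base at the identity, $\bigcap_X X = \{1\}$ and $\orth{\{1\}} = G$. The union is directed, because $X\cap X'$ is again compact open, so $G$ is indeed this union. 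This gives the whole forward direction at once, and I would present this duality argument as the main proof.
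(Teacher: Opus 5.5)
Your converse direction is immediate. The duality argument you designate as the main proof is essentially the paper's: total disconnectedness of $\dual{G}$ gives $\bigcap_X X=\{1\}$, and Corollary \ref{ann of sums} then gives $G=\closure{\sum_X \orth{X}}$ (that corollary yields the closure of the sum, not the union directly). To drop the closure you need that this sum is a directed union of open subgroups, hence an open and therefore closed subgroup; the paper states this explicitly, while you leave it implicit. Your first route through the structure theorem for compactly generated $LCA$ groups is also correct, just heavier than needed.
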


\begin{proof}
	If $C(G) = G$, then $\dual{G}$ is totally disconnected, therefore \[
		\bigcap_{\mathclap{\substack{X \le \dual{G} \\ X \ open \ and \ compact}}} X  = \{1\},
	\] which by Corollary \ref{ann of sums} implies \[
		\closure{\sum_{\mathclap{\substack{H \le G \\ H  \ open \ and \ compact}}} H} = G.
	\]
	But the sum of all compact and open subgroups of $G$ is an open subgroup, (since it is the sum of a non-empty open set with something else) and therefore closed. At the same time, this sum is also equal to the union of all compact and open subgroups of G: \\
	If $x$ belongs to that sum, then there exist compact open subgroups $H_1, \dots, H_n$ such that $x \in H_1 + \dots + H_n$ and $H_1 + \dots + H_n$ is also a compact and open subgroup, so x is in the mentioned union. In conclusion, \[
		G \quad = \quad \sum_{\mathclap{\substack{H \le G \\ H \ open \ and \\ compact}}} H \quad = \quad \bigcup_{\mathclap{\substack{H \le G \\ H \ open \ and \\ compact}}} H
	\]
\end{proof}

\begin{corollary}
	\label{compact subset contained in compact open subgroup}
	If $G \in LCA$ is elliptic, any compact subset of $G$ is contained in a compact open subgroup. In particular, any compactly generated subgroup of $G$ has compact closure.
\end{corollary}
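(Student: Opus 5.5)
By Corollary \ref{union of compact open sbgps}, an elliptic $G$ is the union of its compact open subgroups. Moreover, this family is directed: if $H_1, H_2$ are compact open subgroups, then $H_1 + H_2$ is again a compact open subgroup. It is compact as the continuous image of $H_1 \times H_2$ under addition, and open because it contains $H_1$ and is a subgroup. The plan is to combine this directedness with a compactness argument.

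Let $C \subset G$ be compact. The compact open subgroups $\{H\}$ form an open cover of $C$, so finitely many of them, say $H_1, \dots, H_n$, already cover $C$. By directedness, $H := H_1 + \dots + H_n$ is a compact open subgroup containing each $H_i$, and hence containing $C$. This proves the first assertion.

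For the second assertion, let $L \le G$ be generated by a compact set $C$. (If "compactly generated" is instead read as "$L$ is generated by some compact subset of $L$", that subset is still compact in $G$, so the same argument applies.) By the first part, $C \subset H$ for some compact open subgroup $H$, so $L = \langle C \rangle \subset H$. Since $H$ is open, it is also closed, so $\closure{L} \subset H$. Thus $\closure{L}$ is a closed subset of a compact set, hence compact.

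None of these steps is a real obstacle. The only point needing care is checking that the sum of two compact open subgroups is again compact and open, which was done in the first paragraph.
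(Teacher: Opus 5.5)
Your proposal is correct and follows essentially the same argument as the paper: cover the compact set by finitely many compact open subgroups, using Corollary \ref{union of compact open sbgps}, and take their sum. You also spell out two steps the paper leaves implicit, namely why the sum of compact open subgroups is again compact and open, and how the ``in particular'' clause follows because open subgroups are closed.
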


\begin{proof}
	Let $K \subset G$ be compact. For each $x \in K$, let $H_x$ be a compact open subgroup of $G$ containing $x$. Then by compactness of $K$, \[
		K \subset \bigcup_{i=1}^m H_{x_i} \subset H = \sum_{i=1}^m H_{x_i},
	\]
	for $x_1, \dots, x_m \in K$. So $H$ is a compact open subgroup of $G$ containing $K$.
\end{proof}

\subsection{Pontryagin Dual of a module}
\label{Pontryagin Dual of a module}

Let R be a topological ring and $M \in LC_R M$ (that is, M is a locally compact left topological $R$-module). Then for $\chi \in \widehat{M}$ and $r \in R$, we can define \begin{align*}
	\chi^r\colon & M \ \to \unitcircle \\
	             & m \mapsto \chi(rm)
\end{align*}
$\chi^r$ is still an element of $\dual{M}$ because M is a topological R-module.
It is easy to check that for all $\chi, \psi \in \dual{M}$ and $r, r' \in R$: \hfill \begin{enumerate}
	\item $\chi^{1_R} = \chi$
	\item $(\chi \psi)^r = \chi^r \psi^r$
	\item $\chi^{rr'} = {(\chi^r)}^{r'}$
	\item $\chi^{r + r'} = \chi^{r}\chi^{r'}$
\end{enumerate}
So $\dual{M}$ becomes a right (not necessarily topological) R-module, under multiplicative notation.

\begin{theorem}
	\label{pontryagin dual of a module}
	If R is locally compact, then $\dual{M}$ is a topological R-module with respect to the above construction.
\end{theorem}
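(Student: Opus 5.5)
We must show that the map $\dual{M} \times R \to \dual{M}$, $(\chi, r) \mapsto \chi^r$, is continuous when $\dual{M}$ carries the compact-open topology. Sub-basic neighbourhoods of the identity there are $W(K,V) = \{\psi : \psi(K) \subset V\}$, with $K \subset M$ compact and $V$ an open neighbourhood of $1$ in $\unitcircle$. The plan is to prove continuity at an arbitrary point $(\chi_0, r_0)$ directly with these neighbourhoods, using local compactness of $R$ to get a compact neighbourhood of $r_0$ and joint continuity of the action $R \times M \to M$ to control the products that appear.

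First I will write the difference multiplicatively:
\[
\chi^r(m)\overline{\chi_0^{r_0}(m)} = \bigl(\chi\overline{\chi_0}\bigr)(rm)\cdot \chi_0\bigl((r-r_0)m\bigr).
\]
Fix $K$ compact and $V$. Choose a symmetric neighbourhood $V'$ of $1$ with $V'V' \subset V$. The goal is to find neighbourhoods of $\chi_0$ and of $r_0$ that make both factors lie in $V'$ for all $m \in K$.

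The second factor is handled first. The map $(r,m) \mapsto \chi_0((r-r_0)m)$ is continuous and equals $1$ on the compact set $\{r_0\} \times K$. By the tube lemma there is a neighbourhood $U_1$ of $r_0$ such that $\chi_0((r-r_0)m) \in V'$ for all $r \in U_1$ and $m \in K$.

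For the first factor I take $U_2$, a compact neighbourhood of $r_0$ (this is where local compactness of $R$ enters), and set $U = U_1 \cap U_2$. The set $L = U_2 K$ is the image of the compact set $U_2 \times K$ under the continuous action, so $L$ is compact in $M$. If $\chi \in \chi_0 W(L, V')$, then $(\chi\overline{\chi_0})(rm) \in V'$ for every $r \in U$ and $m \in K$. Hence $\chi^r \in \chi_0^{r_0} W(K,V)$ whenever $\chi \in \chi_0 W(L,V')$ and $r \in U$, which gives joint continuity.

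I must also check that $\chi^r$ is a character, which holds because $m \mapsto rm$ is a continuous homomorphism, and that the module axioms hold, which were already listed before the statement. The main point to watch is the compactness of $L = U_2K$: without local compactness of $R$ there is no compact $U_2$, and that is exactly why the hypothesis is needed. The tube-lemma step is routine.
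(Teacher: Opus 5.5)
Your proof is correct and follows the paper's argument: the same factorization $\chi^r\overline{\chi_0^{r_0}} = (\chi\overline{\chi_0})^r\,\chi_0^{r-r_0}$, with local compactness of $R$ used to make $U_2K$ compact so that the first factor is controlled. The differences are cosmetic: you use neighbourhoods instead of nets, and you handle the second factor with the tube lemma where the paper uses uniform continuity of the action on the compact set $(r+V)\times K$, which incidentally shows that this step does not need local compactness.
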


\begin{proof}
	We need to show that the map \begin{align*}
		\dual{M} \times R \  & \to \dual{M}   \\
		(\chi, r)            & \mapsto \chi^r
	\end{align*}
	is continuous. We show this through net convergence: Let $(\chi_\alpha)_\alpha$ be a net  converging to  $\chi \in \dual{M}$ and $(r_\alpha)_\alpha$ converging to $r \in R$.

	\[
		\chi_\alpha^{r_\alpha}\chi^{-r} = \chi_\alpha^{r_\alpha}\chi^{-r - r_\alpha +r_\alpha} = (\chi_\alpha\inv{\chi})^{r_\alpha} \chi^{r_\alpha - r}.
	\]
	We want to show that the above expression converges to the trivial character uniformly on compact sets. Note that \ $\abs{(\chi_\alpha \inv{\chi})^{r_\alpha}\chi^{r_\alpha - r} - 1} \le \abs{(\chi_\alpha \inv{\chi})^{r_\alpha} - 1} \  + \  \abs{\chi^{r_\alpha - r} - 1}$.\\
	Let $V$ be a compact neighbourhood of $0_R$, $K$ be a compact subset of M and $\epsilon > 0$. Then $r + V$ is a compact neighbourhood of r and there exists $\beta_1$ such that $\alpha \ge \beta_1 \ \implies r_\alpha \in r+V$. For $\alpha \ge \beta_1$, we get
	\begin{align*}
		 & \sup_{m \in K} \abs{\chi_\alpha(r_\alpha m)\chi(-r_\alpha m) - 1} = \sup_{m \in K}\abs{\chi_\alpha(r_\alpha m) - \image{\chi}{r_\alpha m}} \\
		 & \le \sup_{m \in (r+V)K}\abs{\chi_\alpha(m) - \chi(m)}
	\end{align*}

	Since $\chi_\alpha \longrightarrow \chi$ and $(r+V)K$ is compact, there exists $\beta_2$ such that for $\alpha \ge \beta_2$, the last supremum becomes smaller than $\frac{\epsilon}{2}$. So we take $\beta$ to be some index that is bigger than both $\beta_1$ and $\beta_2$. \\
	Let $f\colon R \times M \to M$ be the R-module multiplication in M. Then $f\arrowvert_{(r+V)\times K}$ is uniformly continuous by Theorem \ref{Guaranteed uniform continuity}, so for any neighbourhood U of $0$ in M, there exists an open set $A \subset R\times M$ containing $(0_R, 0)$ such that whenever $(r_1, m_1), (r_2, m_2) \in (r+V)\times K$ and $(r_1 - r_2, m_1 - m_2) \in A$, $r_1m_1 - r_2m_2 \in U$. \\
	In particular, A will contain $W\times \{0\}$, for some neighbourhood of $0_R$, W.\\
	Let $\tau_1$ be such that $\alpha \ge \tau_1 \implies r_\alpha - r \in W$ and $\tau_2$ such that $\alpha \ge \tau_2 \implies r_\alpha \in r+V$. Then if $\tau$ is some upper bound of $\tau_1, \tau_2$, we get that for $\alpha \ge \tau$ and any $m \in K$, \[(r_\alpha, m), (r, m) \in (r+V)\times K, \ (r_\alpha - r, m - m) = (r_\alpha - r, 0) \in W\times \{0\} \subset A.
	\]
	So for $\alpha \ge \tau$, we get that $r_\alpha m - rm \in U$, for any $m \in K$.
	In particular, let $U = \invimage{\chi}{\{z \in \unitcircle: \abs{z - 1} < \frac{\epsilon}{2}\}}$.
	Then \[\sup_{m \in K}\abs{\chi(r_\alpha m) - \chi(rm)} = \sup_{m \in K}\abs{\chi(r_\alpha m - rm) - 1} < \frac{\epsilon}{2}\] for $\alpha \ge \tau$. \\
	Let $\sigma \ge \beta, \tau$. Then for $\alpha \ge \sigma$, we get that \begin{align*}
		\sup_{m \in K}\abs{\chi_\alpha^{r_\alpha}(m)\chi^{-r}(m) - 1} & \le  \sup_{m \in K}\abs{\chi_\alpha(r_\alpha m) - \image{\chi}{r_\alpha m}} + \sup_{m \in K}\abs{\chi(r_\alpha m) - \chi(rm)} \\
		                                                              & < \frac{\epsilon}{2} + \frac{\epsilon}{2} = \epsilon
	\end{align*}
	In other words, $\chi_\alpha^{r_\alpha}$ converges uniformly on compact sets to $\chi^r$, so $\dual{M}$ is a topological R-module.
\end{proof}

\begin{prop}
	Let $M \in LC_RM$. The Pontryagin map $\eta_M$ is R-linear, therefore $M$ and $\dual{\dual{M}}$ are isomorphic as topological R-modules. Besides that, if $f\colon M \to N$ is a continuous R-linear homomorphism, then so is $\widehat{f}$ (see Definition \ref{dual morphism})
\end{prop}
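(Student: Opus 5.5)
The plan is to verify both claims directly from the definitions of the module structures on duals, using Pontryagin duality for all of the topological content. I will assume, as in Theorem \ref{pontryagin dual of a module}, that $R$ is locally compact, so that $\dual{M}$ is a topological right $R$-module.

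\textbf{Module structure on the bidual.} The first step is to say which $R$-module structure $\dual{\dual{M}}$ carries. Since $\dual{M}$ is a right module, I define the action on $\omega \in \dual{\dual{M}}$ by
\[
\omega^r(\chi) := \omega(\chi^r).
\]
A one-line computation gives $(\omega^r)^{r'}(\chi) = \omega((\chi^{r'})^r) = \omega(\chi^{r'r})$, so this is a \emph{left} action: writing $r\omega := \omega^r$, we get $r'(r\omega) = (r'r)\omega$. The analogues of properties 1--4 in the construction before Theorem \ref{pontryagin dual of a module} follow in the same way.

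Continuity of this action is proved by rerunning the net argument of Theorem \ref{pontryagin dual of a module} with left and right interchanged. That argument used only local compactness of $R$, joint continuity of the action on the underlying group, and the compact-open topology on characters. So $\dual{\dual{M}}$ is a topological left $R$-module, exactly as $M$ is.

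\textbf{$R$-linearity of $\eta_M$.} With $\eta_M(m)(\chi) = \chi(m)$, for $r \in R$, $m \in M$ and $\chi \in \dual{M}$ we have
\[
\eta_M(rm)(\chi) = \chi(rm) = \chi^r(m) = \eta_M(m)(\chi^r) = \bigl(r\,\eta_M(m)\bigr)(\chi).
\]
Hence $\eta_M(rm) = r\,\eta_M(m)$. Additivity is already known, so $\eta_M$ is $R$-linear.

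By the \hyperref[Duality thm]{Pontryagin Duality theorem}, $\eta_M$ is an isomorphism of topological groups. A bijective $R$-linear homeomorphism between topological $R$-modules is an isomorphism of topological $R$-modules, since its inverse is automatically $R$-linear. Therefore $M \cong \dual{\dual{M}}$ as topological $R$-modules.

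\textbf{The dual morphism.} For $f\colon M \to N$ continuous and $R$-linear, the dual is $\dual{f}(\chi) = \chi \circ f$, which is a continuous group homomorphism $\dual{N} \to \dual{M}$ by Definition \ref{dual morphism}. For right $R$-linearity we compute
\[
\dual{f}(\chi^r)(m) = \chi(r f(m)) = \chi(f(rm)) = (\chi\circ f)^r(m),
\]
so $\dual{f}(\chi^r) = \dual{f}(\chi)^r$.

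\textbf{Main obstacle.} The only delicate point is the bookkeeping of sides. Dualizing turns left modules into right modules and back, so the action on $\dual{\dual{M}}$ has to be defined so that it reverses the order of scalars, and this must be checked to agree with the original left action on $M$ under $\eta_M$. This matters only when $R$ is noncommutative. For the rings relevant later, such as $\mathbb{Z}\inv{S}$, it is vacuous.
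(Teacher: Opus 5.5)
Your proof is correct and follows the same route as the paper. The paper also acts on $\dual{\dual{M}}$ by $({}^r\omega)(\chi)=\omega(\chi^r)$, checks ${}^r\eta_M(m)=\eta_M(rm)$ with the same chain of equalities, and verifies $\dual{f}(\psi^r)=(\dual{f}(\psi))^r$ by the identical computation. Your extra bookkeeping on sides and on continuity of the bidual action is left implicit there; continuity also follows at once by transporting the action of $M$ along the topological isomorphism $\eta_M$.
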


\begin{proof}
	Let $r \in R$, $m \in M$ and $\chi \in \dual{M}$. \[
		^r\eta_M(m)(\chi) = \eta_M(m)(\chi^r)= \chi^r(m) = \chi(rm) = \eta_M(rm)(\chi)\]
	In other words, $^r\eta_M(m) = \eta_M(rm)$.  \\

	If $f\colon M \to N$ is R-linear, then given $\psi \in \dual{N}$, $m \in M$ and $r \in R$, \[
		\widehat{f}(\psi^r)(m) = \psi^r\circ f (m) = \psi(rf(m)) = \psi(f(rm)) = \psi \circ f (rm) = (\widehat{f}(\psi))^r (m)
	\]
	So $\widehat{f}(\psi^r) = (\widehat{f}(\psi))^r$

\end{proof}

Therefore, we get a contravariant functor $F_R$ from $LC_RM$ to $LC M_R$.
Since continuous R-linear maps are in particular continuous group homomorphisms, $\eta$ will still make the respective diagrams commute, so $\eta$ will still be a natural isomorphism between the identity functor in $LC_RM$ and $F_{R^{op}} \circ F_R$. As such, we get an equivalence of categories \[
	LC_RM \cong (LC M_R)^{op} \cong (LC_{R^{op}}M)^{op}\]

\noindent This implies that we are also allowed to swap direct and inverse limits when talking about modules, and that $F_R$ commutes with finite products in general, up to isomorphism. It also does the same thing with direct sums of discrete left $R$-modules and products of compact right $R$-modules. \\

\begin{prop}
	\label{annihilator of submodule}
	If $M \in LC_RM$ and $S \subset M$ is closed under scalar multiplication, then $A(S)$ is a closed submodule of $\dual{M}$.
\end{prop}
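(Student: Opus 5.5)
The plan is to check separately that $A(S)$ is a closed subgroup of $\dual{M}$ and that it is stable under the right $R$-action $\chi \mapsto \chi^r$ defined above. Together these make it a closed submodule, because $\dual{M}$ is a topological $R$-module by Theorem \ref{pontryagin dual of a module}. No continuity of the action is actually needed, only the algebraic closure property, but it is reassuring that the submodule inherits a topological module structure.

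\emph{Subgroup and closedness.} For each $s \in S$, the evaluation map $\chi \mapsto \chi(s)$ is continuous on $\dual{M}$. Indeed, the compact-open topology refines pointwise convergence, since $\{s\}$ is compact. Hence
\[
A(S) = \bigcap_{s \in S} \{\chi \in \dual{M} : \chi(s) = 1\}
\]
is an intersection of closed sets, and therefore closed. It is a subgroup because $(\chi\psi)(s) = \chi(s)\psi(s)$ and $\inv{\chi}(s) = \inv{\chi(s)}$, and the trivial character lies in $A(S)$. If $S$ is empty, then $A(S)=\dual{M}$ and the claim is trivial.

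\emph{Stability under scalars.} Let $\chi \in A(S)$ and $r \in R$. For any $s \in S$ we have $rs \in S$, since $S$ is closed under scalar multiplication. Therefore $\chi^r(s) = \chi(rs) = 1$, so $\chi^r \in A(S)$. Combined with the previous step, $A(S)$ is a closed right $R$-submodule of $\dual{M}$.

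There is no real obstacle here. The only point needing care is the continuity of the point evaluations, which justifies closedness, and this is standard for the compact-open topology. It is also worth remarking that $S$ need not be a subgroup: by the earlier identity $A(S) = A(\closure{<S>})$ one could replace $S$ by the closed submodule it generates, but this is unnecessary for the argument.
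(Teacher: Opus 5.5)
Your proof is correct and is essentially the paper's argument: the paper takes for granted that $A(S)$ is a closed subgroup and then checks $\chi^r(s)=\chi(rs)=1$ for $s\in S$ exactly as you do, so your continuity-of-point-evaluations argument just fills in the step the paper leaves implicit. One small quibble: your passing appeal to Theorem \ref{pontryagin dual of a module} requires $R$ to be locally compact, which is not assumed here, but as you note yourself it plays no role in the argument.
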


\begin{proof}
	We already know it is a closed subgroup. Let $\chi \in A(S)$ and $r \in R$. Given $s \in S$, $\chi^r(s) = \chi(rs) = 1$, since $rs \in S$. Therefore, $\chi^r \in A(S)$.
\end{proof}

So annihilators will establish a correspondence between closed R-submodules of M and closed R-submodules of its dual, and the same isomorphisms from Proposition \ref{annihilator isomorphisms} hold.

\begin{corollary}
	$M_0$ and $C(M)$ are always closed R-submodules of M.
\end{corollary}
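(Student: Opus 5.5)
The plan is to reduce both claims to the duality results already available, and in particular to Proposition \ref{annihilator of submodule} combined with Theorem \ref{ann of connected component} and its Pontryagin-dual form. The idea is to write each of $M_0$ and $C(M)$ as the orthogonal complement (equivalently, the annihilator after identifying $M$ with $\dual{\dual{M}}$ via $\eta_M$) of a subset of $\dual{M}$ that is closed under the right $R$-action. The annihilator of such a set is then automatically a closed submodule.

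For $C(M)$ I would argue directly from the definition, with no duality needed for the algebraic part. If $x \in C(M)$ then $\closure{<x>}$ is compact. For $r \in R$, the map $m \mapsto rm$ is a continuous group homomorphism, so it sends the compact subgroup $\closure{<x>}$ to a compact subgroup of $M$ that contains $rx$. Hence $\closure{<rx>}$ is a closed subset of a compact set, so it is compact, and $rx \in C(M)$. Closedness of $C(M)$ and the fact that it is a subgroup are already noted after Theorem \ref{ann of connected component}, applied to $\dual{M}$ and transported by $\eta_M$: $C(M) = \orth{(\dual{M})_0}$. This is the one place where I need the dual form of Theorem \ref{ann of connected component}, namely $A(C(M))$ corresponds to $(\dual{M})_0$, and then Proposition \ref{annihilator isomorphisms}(2).

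For $M_0$ the same direct argument works. $M_0$ is a closed subgroup, being a connected component of a topological group. For $r \in R$, the image $rM_0$ is connected and contains $0$, because $m\mapsto rm$ is continuous; so $rM_0 \subset M_0$. Alternatively, one can take the duality route: $M_0 = \orth{C(\dual{M})}$ by Theorem \ref{ann of connected component} and Proposition \ref{annihilator isomorphisms}(2). Here $C(\dual{M})$ is closed under the right action by the argument above applied to the topological $R$-module $\dual{M}$ (Theorem \ref{pontryagin dual of a module}, which requires $R$ locally compact). So the version of Proposition \ref{annihilator of submodule} for orthogonal complements gives the claim.

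There is no real obstacle. The only point needing care is closedness of $C(M)$, since a priori it is only a union of compact subgroups. I would take that from the remark following Theorem \ref{ann of connected component} rather than reprove it; this keeps the direct argument valid for arbitrary topological $R$, without invoking Theorem \ref{pontryagin dual of a module}.
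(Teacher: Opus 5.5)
Your proof is correct. For $M_0$ it is the paper's argument: $rM_0$ is connected and contains $0$, so it lies in $M_0$. For $C(M)$ you take a different route.

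\textbf{The paper's route.} It proves the $M_0$ case by hand and gets $C(M)$ by duality. It uses $C(\dual{M}) = A(M_0)$ (Theorem \ref{ann of connected component}) together with Proposition \ref{annihilator of submodule}, and then Pontryagin duality to pass from $\dual{M}$ back to $M$. In effect, $C(M) = \orth{(\dual{M})_0}$ is the orthogonal complement of a set stable under the right action. This gives closedness and $R$-stability in one step. The cost is that it relies on the right $R$-module structure of $\dual{M}$ and on the $R$-linearity of $\eta_M$.

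\textbf{Your route.} You prove $R$-stability of $C(M)$ directly: for fixed $r$, the image of $\closure{\langle x\rangle}$ under the continuous homomorphism $m \mapsto rm$ is a compact, hence closed, subgroup containing $rx$. You use duality only for the one non-formal fact, that $C(M)$ is a closed subgroup. This keeps the algebraic part elementary and shows that only continuity of each map $m \mapsto rm$ is needed.

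\textbf{A small correction.} In your alternative duality route for $M_0$, you do not need $R$ to be locally compact, so Theorem \ref{pontryagin dual of a module} is unnecessary there. For fixed $r$, the map $\chi \mapsto \chi^r$ is the dual of $m \mapsto rm$, so it is continuous by Proposition \ref{dual morphism}. That is all you need for $C(\dual{M})$ to be stable under the right action. Hence the duality argument also works for an arbitrary topological ring $R$.
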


\begin{proof}
	Combining Theorem \ref{ann of connected component} and Proposition \ref{annihilator of submodule}, it suffices to show that $M_0$ is a closed under scalar multiplication. \\
	Let $r \in R$. Then $rM_0$ is a connected subgroup of $M$, so it must be contained in $M_0$, which proves the statement.
\end{proof}

It is easy to see that if $R$ is an elliptic ring, then because it is also assumed to be unital, any topological module over it will be elliptic. In the locally compact case, Pontryagin Duality along with Lemma \ref{elliptic and t.d. duality}  allows us to conclude that any $LC$ topological module over $R$ is both elliptic and totally disconnected.

\subsection{LC modules over the profinite integers}
\label{LC modules over the profinite integers}

Locally compact modules over \hyperref[definition of profinite integers]{the profinite integers} $\hat{\mathbb{Z}}$ can be intrinsically characterized, and this will be useful for Lemma \ref{compactly generated elliptic and totally disconnected module}. A very useful fact about $\hat{\mathbb{Z}}$ is that the integers are a dense subring. This implies that any topological module structure over $\hat{\mathbb{Z}}$ is unique, and any closed subgroup of a topological $\hat{\mathbb{Z}}$-module is a submodule.

\begin{prop}
	Let $K$ be a compact abelian group. Then $K$ is totally disconnected (equivalent to being profinite as an abelian group in this case) if and only if $K \in LC_{\hat{\mathbb{Z}}}M$.
\end{prop}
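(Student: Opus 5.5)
For the direction ``$K \in LC_{\hat{\mathbb{Z}}}M \implies K$ totally disconnected'', I would use the remark at the end of the previous subsection. $\hat{\mathbb{Z}}$ is a compact (hence elliptic) unital ring, so every module over it is elliptic. By Pontryagin duality and Corollary \ref{elliptic and t.d. duality}, every $LC$ $\hat{\mathbb{Z}}$-module is then also totally disconnected. More directly: the map $\hat{\mathbb{Z}} \to K$, $r \mapsto rx$, is continuous, so $\closure{<x>}$ lies in the compact image $\hat{\mathbb{Z}}x$. Hence $K$ is elliptic, so $\dual{K}$ is totally disconnected. Since $\dual{K}$ is also discrete, its dual $K \cong \dual{\dual{K}}$ acquires the module structure via Theorem \ref{pontryagin dual of a module}. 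The conclusion that $K$ itself is totally disconnected needs care, so I would argue it directly instead. $\hat{\mathbb{Z}}$ is totally disconnected, with compact open ideals $n\hat{\mathbb{Z}}$ forming a base at $0$. For $x \in K_0$, the map $r \mapsto rx$ sends the connected set $\{1\}$ into $K_0$. That alone gives nothing, so I use duality instead. The dual $\dual{K}$ is a discrete right $\hat{\mathbb{Z}}$-module. For $\chi \in \dual{K}$, the map $r \mapsto \chi^r$ is continuous from $\hat{\mathbb{Z}}$ to a discrete space, so its kernel contains an open ideal $n\hat{\mathbb{Z}}$. Hence $\chi^n = 1$, so $\dual{K}$ is torsion. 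By Corollary \ref{elliptic and t.d. duality}, $K$ is then totally disconnected.

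For the converse, let $K$ be compact and totally disconnected. I would show that every $x \in K$ has $\closure{<x>}$ profinite, and then extend the $\mathbb{Z}$-action continuously to $\hat{\mathbb{Z}}$. By Theorem \ref{Locally compact t.d.}, the compact open subgroups $U$ form a neighbourhood base at $0$. Each $K/U$ is finite, so $n_U K \subset U$ for $n_U = |K/U|$. Define $rx := \lim_n m_n x$ for integers $m_n \to r$ in $\hat{\mathbb{Z}}$. This net is Cauchy: if $m_n - m_{n'} \in n_U\hat{\mathbb{Z}}$, then it is divisible by $n_U$, so $(m_n - m_{n'})x \in U$. Since $K$ is compact and therefore complete, the limit exists and is independent of the chosen net.

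The key step, and the main obstacle, is continuity of the map $\hat{\mathbb{Z}} \times K \to K$. Given $r, x$ and $U$, I would take $r' \in r + n_U\hat{\mathbb{Z}}$ and $x' \in x + U$, and write
\[
r'x' - rx = (r' - r)x' + r(x' - x).
\]
The first term lies in $n_U\hat{\mathbb{Z}}K \subset \closure{n_U K} \subset U$, since $U$ is closed. For the second term, $rU \subset U$, because $U$ is a closed subgroup and $r$ is a limit of integers. Hence $r'x' - rx \in U$. The module axioms hold on the dense subring $\mathbb{Z}$ and pass to $\hat{\mathbb{Z}}$ by continuity and density, which also gives uniqueness of the structure.
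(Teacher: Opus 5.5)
Your proof is correct. In the forward direction, once the false starts are removed (the remark about $r\mapsto rx$ on $K_0$, and the claim that $K\cong\dual{\dual{K}}$ ``acquires'' a structure, can simply be deleted), you are giving the paper's argument in explicit form. The paper appeals to the remark that an $LC$ module over an elliptic ring has elliptic dual. You instead observe directly that the open subgroup $\{r\in\hat{\mathbb{Z}} : \chi^r=1\}$ of the discrete module $\dual{K}$ contains some $n\hat{\mathbb{Z}}$, so $\dual{K}$ is torsion, and then apply Corollary \ref{elliptic and t.d. duality}.

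The converse is where you genuinely differ. The paper dualizes. $\dual{K}$ is discrete torsion, and $(\sum c_i i!)a$ makes sense because $a$ has some finite order $n$. Continuity is automatic because the action is constant on $(x+n\hat{\mathbb{Z}})\times\{a\}$, and Theorem \ref{pontryagin dual of a module} carries the structure back to $K$. You stay on $K$ and extend the $\mathbb{Z}$-action by uniform continuity. For this you use the compact open subgroups $U$ of Theorem \ref{Locally compact t.d.}, the inclusion $n_UK\subset U$ (which plays exactly the role of ``$na=0$'' on the dual side), and completeness of $K$.

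The paper's route is shorter given the duality machinery already set up, and its continuity check is trivial. Yours is elementary and intrinsic, needing neither Pontryagin duality nor Theorem \ref{pontryagin dual of a module}. It also describes the action concretely as $rx=\lim m_nx$ and gives uniqueness of the structure in the same stroke, at the cost of more bookkeeping.

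One ordering point: the estimate $r'x'-rx=(r'-r)x'+r(x'-x)$ already uses bi-additivity of the extended action. Record that before the continuity step rather than deducing it afterwards ``by continuity and density''. It follows at once from the limit definition and continuity of addition, via $r(x+y)=\lim m_n(x+y)$ and $(r+s)x=\lim(m_n+k_n)x$, so nothing is circular.
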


\begin{proof}
	If $K \in LC_{\hat{\mathbb{Z}}}M$, it has to be totally disconnected by the final comment in the previous subsection. \\
	Conversely, if $K$ is totally disconnected, then $A = \dual{K}$ is a discrete torsion abelian group (Corollary \ref{elliptic and t.d. duality}). So given any $x = \sum_{i=0}^{+\infty} c_ii! \in \hat{\mathbb{Z}}$ and $a \in A$, the expression $(\sum_{i=0}^{+\infty}c_i i!)a$ is well defined, because $a$ has finite order. This allows us to define a natural $\hat{\mathbb{Z}}$-module structure on $A$. The scalar multiplication \begin{align*}
		f\colon \hat{\mathbb{Z}} \times A & \to A      \\
		(x, a)                            & \mapsto xa
	\end{align*}
	being continuous is equivalent to it being locally constant, since $A$ is discrete. But given $(x, a) \in \hat{\mathbb{Z}} \times A$ with $na = 0$ for some $n \in \mathbb{N}$, we can choose $U = (x + n\hat{\mathbb{Z}}) \times \{a\}$, and we clearly have that $f(u) = xa$, for any $u \in U$. So if $A$ is torsion, it admits a topological $\hat{\mathbb{Z}}$-module structure, which implies $K \in LC_{\hat{\mathbb{Z}}}M$ as well by Theorem \ref{pontryagin dual of a module} . \\

\end{proof}

\begin{theorem}
	\label{characterization of modules over profinite integers}
	Let $G \in LCA$. Then $G \in LC_{\hat{\mathbb{Z}}}M$ if and only if $G$ is elliptic and totally disconnected.
\end{theorem}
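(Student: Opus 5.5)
The forward direction is already available. If $G \in LC_{\hat{\mathbb{Z}}}M$, then $\hat{\mathbb{Z}}$ is a compact, hence elliptic, unital ring. By the final remark of Subsection \ref{Pontryagin Dual of a module}, any locally compact topological module over it is elliptic. Applying the same remark to $\dual{G}$, which is a topological $\hat{\mathbb{Z}}$-module by Theorem \ref{pontryagin dual of a module}, shows $\dual{G}$ is elliptic. Then Corollary \ref{elliptic and t.d. duality} makes $G$ totally disconnected. So the work is in the converse.

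For the converse, assume $G$ is elliptic and totally disconnected. By Corollary \ref{union of compact open sbgps}, $G = \bigcup_H H$ over its compact open subgroups $H$. Each such $H$ is compact and totally disconnected, so the preceding Proposition gives it a topological $\hat{\mathbb{Z}}$-module structure. Because $\mathbb{Z}$ is dense in $\hat{\mathbb{Z}}$, that structure is unique and is determined by the integer action: for $x \in \hat{\mathbb{Z}}$ and $h \in H$, $xh = \lim n_\alpha h$ for any net of integers $n_\alpha \to x$. Uniqueness gives compatibility. If $H \subset H'$ are compact open, the structure on $H'$ restricts to a topological module structure on the closed subgroup $H$, which must coincide with $H$'s own. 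The family of compact open subgroups is directed, since $H_1 + H_2$ is again compact open. Hence the actions glue to a well-defined map $\hat{\mathbb{Z}} \times G \to G$, and the module axioms hold because they can be checked inside a single $H$.

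The remaining point, and the one to treat carefully, is continuity of the glued action. Fix $(x, g)$ and choose a compact open subgroup $H$ containing $g$. Then $\hat{\mathbb{Z}} \times (g + H) = \hat{\mathbb{Z}} \times H$ is an open neighbourhood of $(x,g)$ in $\hat{\mathbb{Z}} \times G$. On it the action coincides with the action map of the topological module $H$, which is continuous, and $H$ is open in $G$. Local continuity everywhere then gives global continuity.

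If one prefers to avoid the gluing, there is an alternative route: dualize, as the preceding Proposition does. Since $G$ is elliptic, $\dual{G}$ is totally disconnected, and since $G$ is totally disconnected, $\dual{G}$ is elliptic. One would then need the action on $\dual{G}$ directly, which is no easier. So I would use the gluing argument, and the only real obstacle is verifying the compatibility through uniqueness, which the density of $\mathbb{Z}$ handles.
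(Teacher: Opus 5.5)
Your proposal is correct and follows essentially the same route as the paper. For the forward direction, both use the ellipticity of $G$ and $\dual{G}$, obtained from compactness of $\hat{\mathbb{Z}}$, Theorem \ref{pontryagin dual of a module} and Corollary \ref{elliptic and t.d. duality}. For the converse, both glue the unique $\hat{\mathbb{Z}}$-module structures on the profinite compact open subgroups, with continuity coming from the open cover by the sets $\hat{\mathbb{Z}} \times H$; your use of directedness ($H_1 + H_2$ compact open) for compatibility and additivity is what the paper does with $P_i + P_j$.
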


\begin{proof}
	The first implication is a result of $G$ and  $\dual{G}$ being both elliptic. \\
	So suppose $G$ is elliptic and totally disconnected. By what was seen in Corollary \ref{union of compact open sbgps}, $G$ is a union of compact open subgroups, and all of those will also be totally disconnected. So in this case, $G$ is a union of profinite open subgroups $(P_i)_{i \in I}$. For each $i \in I$, let $f_i\colon \hat{\mathbb{Z}} \times P_i \ \to P_i \subset G$ the topological $\hat{\mathbb{Z}}$-module structure in $P_i$. \\
	$f_i|_{\hat{\mathbb{Z}}\times (P_j\cap P_i)} = f_j|_{\hat{\mathbb{Z}}\times (P_j\cap P_i)}$ for all $i, j \in I$ because the topological $\hat{\mathbb{Z}}$-module strucures are all unique. Since $(\hat{\mathbb{Z}} \times P_i)_{i \in I}$ is an open cover of $\hat{\mathbb{Z}} \times G$, we can extend to a unique continuous function $f\colon \hat{\mathbb{Z}} \times G \to G$ such that $f|_{\hat{\mathbb{Z}}\times P_i} = f_i$ for all $i \in I$. \\
	We note now that $f$ gives $G$ a module strucure over $\hat{\mathbb{Z}}$ given by $x \cdot g = f(x, g)$ (and the action is continuous, since $f$ is): \\
	Let $x, y \in \hat{\mathbb{Z}}$, $g, h \in G$ with $g \in P_i$ and $h \in P_j$. \begin{enumerate}
		\item $1 \cdot g = f_i(1, g) = g$.
		\item $(x + y)\cdot g = f_i(x + y, g) = f_i(x, g) + f_i(y, g) = x\cdot g + y\cdot g$
		\item $(xy) \cdot g = f_i(xy, g) = f_i(x, f_i(y, g)) = x\cdot(y\cdot g)$
		\item $x\cdot (g + h) = f_{ij}(x, g + h)$, where $P_{ij} = P_i + P_j$. \\
		      $f_{ij}(x, g + h) = f_{ij}(x, g) + f_{ij}(y, g) = x\cdot g + y\cdot g$, because $x, y \in P_i + P_j$.
	\end{enumerate}

	So if $G$ is elliptic and totally disconnected, it admits a topological module structure over $\hat{\mathbb{Z}}$.\\
\end{proof}

\subsection{Adaptation of results and some extentions of definitions}
\label{Adaptation of results and some extentions of definitions}

Before looking into the Structure Theorems, we will adapt some results from chapter 4 of \cite{DeitmarEchterhoff2014} to the case of (left) topological modules over a topological ring $R$ in this subsection (the results being adapted can be found in pages 98-101 of \cite{DeitmarEchterhoff2014}). Furthemore, some extra definitions will also be given. \\
All homomorphisms are assumed to be $R$-linear, unless otherwise stated.

\begin{lemma}[Lemma 4.2.7]
	\label{continuous section}
	Let $M$ and $N$ be topological $R$-modules, and $q\colon M \to N$ a continuous surjective homomorphism. Suppose additionally that there exists a continuous section $s$ of $q$, that is, a continuous homomorphism $s: N \to M$ such that $q\circ s = Id_N$. \\
	Then $M$ is isomorphic to $ker(q) \times N$.
\end{lemma}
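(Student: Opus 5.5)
The plan is to write down the obvious candidate isomorphism and check its properties directly. Define
\begin{align*}
	\phi\colon ker(q) \times N & \to M \\
	(k, n) & \mapsto k + s(n),
\end{align*}
with proposed inverse
\begin{align*}
	\psi\colon M & \to ker(q) \times N \\
	m & \mapsto (m - s(q(m)), q(m)).
\end{align*}
Here $ker(q)$ carries the subspace topology and the product carries the product topology. First, $ker(q)$ is a submodule of $M$ because $q$ is $R$-linear. It is also a topological $R$-module: the restriction of the continuous scalar multiplication $R \times M \to M$ to $R \times ker(q)$ lands in $ker(q)$ and stays continuous for the subspace topology. Consequently $ker(q)\times N$ is a topological $R$-module under componentwise operations.

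Next I verify the algebra. The map $\phi$ is $R$-linear, since $r(k+s(n)) = rk + s(rn)$ by linearity of $s$. The map $\psi$ is well defined: $q(m - s(q(m))) = q(m) - q(m) = 0$ because $q\circ s = Id_N$. It is $R$-linear because $q$, $s$ and the module operations are. The two compositions are identities. For one direction,
\[
\psi(\phi(k,n)) = (k + s(n) - s(q(k) + q(s(n))), \, q(k)+n) = (k, n),
\]
using $q(k) = 0$ and $q\circ s = Id_N$. For the other, $\phi(\psi(m)) = m - s(q(m)) + s(q(m)) = m$.

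Finally comes continuity, which is the only point needing care, and even it is routine. The map $\phi$ is the composition of $Id \times s$ with addition $M\times M\to M$, restricted to $ker(q)\times N$, so it is continuous. For $\psi$, the second coordinate $q$ is continuous. The first coordinate $m \mapsto m - s(q(m))$ is continuous as a map into $M$, and it takes values in $ker(q)$, so it is continuous into $ker(q)$ with the subspace topology. A map into a product is continuous exactly when its coordinates are, so $\psi$ is continuous. Therefore $\phi$ is an isomorphism of topological $R$-modules.

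No completeness, local compactness or Open Mapping Theorem is needed here: the continuous section makes the inverse explicitly continuous. The main thing to watch is that all topologies are the subspace and product ones.
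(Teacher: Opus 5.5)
Your proof is correct and follows the paper's approach. Your $\psi(m) = (m - s(q(m)), q(m))$ is exactly the map the paper uses. The only difference is that the paper cites Lemma 4.2.7 of Deitmar--Echterhoff for the topological-group isomorphism and checks just $R$-linearity, while you verify bijectivity and continuity of both directions directly.
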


\begin{proof}
	By Lemma 4.2.7 of \cite{DeitmarEchterhoff2014}, the map $\phi\colon M \to ker(q) \times N$ given by $\phi(x) = (x - s\circ q(x), q(x))$ is an isomorphism of topological groups. Therefore, it suffices to check that it is also $R$-linear: \\
	\begin{align*}
		 & \phi(rx) = (rx - s\circ q(rx), q(rx)) = (rx - r(s\circ q(x)), rq(x)) = \\
		 & = r(x - s\circ q(x), q(x)) = r\phi(x)
	\end{align*}
\end{proof}

\begin{corollary}[Example 4.2.8 of \cite{DeitmarEchterhoff2014}]
	\label{free quotient implies split}
	If $M$ is a topological $R$-module such that there exists a continuous surjective homomorphism $q\colon M \to R^n$ for some $n \in \mathbb{N}$, then $M \cong ker(q) \times R^n$. Furthermore, if $R$ is discrete, we can replace $R^n$ with any free left $R$-module (with the discrete topology).
\end{corollary}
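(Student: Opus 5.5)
The plan is to build a continuous $R$-linear section of $q$ and then apply Lemma \ref{continuous section}. Let $e_1, \dots, e_n$ be the standard basis of $R^n$. Since $q$ is surjective, choose $m_1, \dots, m_n \in M$ with $q(m_i) = e_i$. Define
\[
	s\colon R^n \to M, \qquad s(r_1, \dots, r_n) = \sum_{i=1}^n r_i m_i .
\]

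Next we check the three properties Lemma \ref{continuous section} requires. First, $s$ is additive and left $R$-linear, using the left module axioms of $M$. Second, $s$ is continuous, since it is the composite of the maps $r \mapsto r m_i$ with addition in $M$, all continuous because $M$ is a topological $R$-module. Third, it is a section: by $R$-linearity of $q$,
\[
	q\circ s(r_1,\dots,r_n) = \sum_{i=1}^n r_i q(m_i) = \sum_{i=1}^n r_i e_i = (r_1,\dots,r_n).
\]
Lemma \ref{continuous section} then gives $M \cong \ker(q) \times R^n$ as topological $R$-modules.

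For the discrete case, let $F$ be a free left $R$-module with basis $(e_j)_{j\in J}$, carrying the discrete topology, and let $q\colon M \to F$ be continuous and surjective. Again pick preimages $m_j$ of the $e_j$ and define $s(\sum_j r_j e_j) = \sum_j r_j m_j$; the sums are finite, so $s$ is well defined and $R$-linear. Continuity of $s$ is automatic because $F$ is discrete, and $q\circ s = Id_F$ by the same computation. Lemma \ref{continuous section} applies again.

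The only point needing care is continuity of the section in the first case, which relies on the action $R \times M \to M$ being jointly continuous. This holds by the standing assumption that $M$ is a topological $R$-module. I therefore expect no genuine obstacle.
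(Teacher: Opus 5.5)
Your proposal is correct and follows essentially the same route as the paper. In both, you choose preimages of the basis vectors, extend them to an $R$-linear section $s$ that is continuous because the module action is continuous (and automatically continuous in the discrete free case), and then apply Lemma \ref{continuous section}.
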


\begin{proof}
	For each $i \in \{1, \dots, n\}$, we can choose $x_i \in M$ such that $q(x_i) = e_i$, where $e_i$ is the i-th basis vector of $R^n$. By letting $s\colon R^n \to M$ be the unique homomorphism such that $s(e_i) = x_i$, $s$ is a continuous section of $q$, therefore we can apply the above lemma (continuity is granted by the fact that $M$ is a topological $R$-module). \\
	If $R$ is discrete and $F$ is a free $R$-module, we can construct a section of $q$ in the exact same way, and its continuity will be automatically given.
\end{proof}

\begin{theorem}[Open Mapping Theorem]
	\label{Open mapping thm}
	Let $G$ and $H$ be locally compact groups, with $G$ $\sigma$-compact and suppose $\phi\colon G \to H$ is a continuous surjective homomorphism. Then $\phi$ is an open map.
\end{theorem}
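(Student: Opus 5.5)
The plan is to use a Baire category argument to show that images of identity neighbourhoods have nonempty interior, and then translate that interior back to the identity. First, fix an open neighbourhood $U$ of $e_G$. By continuity of multiplication and inversion, choose a compact neighbourhood $V$ of $e_G$ with $\inv{V}V \subset U$; this uses local compactness of $G$. It suffices to show that $\phi(U)$ is a neighbourhood of $e_H$. Indeed, if $W \ni g$ is open, then $\inv{g}W$ is an open neighbourhood of $e_G$, and $\phi(W) = \phi(g)\phi(\inv{g}W)$ is then a neighbourhood of $\phi(g)$. So openness reduces to the statement at the identity.

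Second, use $\sigma$-compactness to cover $G$ by countably many translates of $V$. Write $G = \bigcup_n K_n$ with $K_n$ compact. Each $K_n$ is covered by finitely many translates $gV^{\circ}$, so there is a sequence $(g_k)_k$ with $G = \bigcup_k g_k V$. Applying $\phi$ and using surjectivity gives
\[
H = \bigcup_k \phi(g_k)\phi(V).
\]
Each $\phi(g_k)\phi(V)$ is compact, being a continuous image of a compact set, and hence closed since $H$ is Hausdorff. Because $H$ is locally compact Hausdorff, it is a Baire space. Therefore some $\phi(g_k)\phi(V)$ has nonempty interior, and by translation so does $\phi(V)$. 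Pick $v \in V$ such that $\phi(v)$ lies in the interior $O$ of $\phi(V)$.

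Third, move this interior point back to the identity. The set $\inv{\phi(v)}O$ is an open neighbourhood of $e_H$ contained in
\[
\inv{\phi(v)}\phi(V) = \phi(\inv{v}V) \subset \phi(\inv{V}V) \subset \phi(U).
\]
Hence $\phi(U)$ is a neighbourhood of $e_H$, as required.

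The main obstacle is the Baire step. It needs the covering sets to be closed, which is why $V$ is chosen compact rather than merely open; it also needs $H$ to be Baire, which is where local compactness of $H$ enters. In the paper's context the result is then applied to $R$-modules simply as topological groups, so no linearity is needed in this argument.
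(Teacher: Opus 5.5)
Your proof is correct and follows the same route as the paper's source. The paper gives no argument of its own and defers to Theorem 4.2.9 of Deitmar--Echterhoff, which is exactly this Baire category argument: a compact $V$ with $V^{-1}V\subset U$, countably many translates of $V$ covering $G$, Baire's theorem in $H$, and translating an interior point of $\phi(V)$ back to the identity. Your use of Hausdorffness of $H$, to get the compact sets $\phi(g_kV)$ closed, is covered by the paper's standing assumption that all groups are Hausdorff.
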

\noindent Proof can be found in Theorem 4.2.9 of \cite{DeitmarEchterhoff2014}. In particular, this result holds if we replace $G$ and $H$ by topological $R$-modules and $\phi$ by an $R$-linear homomorphism, under the same conditions.

\begin{corollary}[Corollary 4.2.11 of \cite{DeitmarEchterhoff2014}]
	\label{sigma compact internal direct sum}
	Suppose that $M$ is a locally compact $R$-module and $N, N'$ are closed, $\sigma$-compact submodules of $M$ such that $N + N' = M$ and $N \cap N' = 0$. Then the map $\phi\colon N \times N' \to M$ that sends $(n, n')$ to $n + n'$ is an isomorphism of topological $R$-modules.
\end{corollary}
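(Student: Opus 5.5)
The plan is to reduce to the group-theoretic version, Corollary 4.2.11 of \cite{DeitmarEchterhoff2014}, and then check $R$-linearity by hand, in the same way Lemma \ref{continuous section} was adapted from its group version.

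First I will establish that $\phi$ is an isomorphism of topological groups. Define $\phi\colon N \times N' \to M$ by $\phi(n,n') = n+n'$.
\begin{itemize}
	\item $\phi$ is a homomorphism of abelian groups.
	\item It is continuous, since addition in $M$ is continuous.
	\item It is injective: if $n + n' = 0$, then $n = -n' \in N \cap N' = 0$.
	\item It is surjective, since $N + N' = M$.
\end{itemize}
Next, $N$ and $N'$ are closed subsets of the locally compact Hausdorff space $M$, so they are locally compact. Hence $N \times N'$ is locally compact, and it is $\sigma$-compact as a product of two $\sigma$-compact spaces. The \hyperref[Open mapping thm]{Open Mapping Theorem} (Theorem \ref{Open mapping thm}) then shows that $\phi$ is open. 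Being a continuous open bijection, $\phi$ is a homeomorphism. This is exactly the content of the cited Corollary 4.2.11, so that step can simply be quoted.

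Then I will verify $R$-linearity, using the product module structure on $N \times N'$, namely $r(n,n') = (rn, rn')$. This structure is well defined and continuous because $N$ and $N'$ are submodules, so each is closed under the continuous scalar action. For $r \in R$ we get
\[
	\phi(r(n,n')) = rn + rn' = r(n+n') = r\phi(n,n').
\]
The inverse of a bijective $R$-linear map is automatically $R$-linear. So $\phi$ is an isomorphism of topological $R$-modules.

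There is no real obstacle in this argument. The only point that needs care is confirming the hypotheses of the Open Mapping Theorem: local compactness of $N \times N'$ comes from closedness, and $\sigma$-compactness is preserved under finite products. Everything else is formal.
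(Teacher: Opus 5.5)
Your proposal is correct and follows essentially the same route as the paper. The paper likewise notes that $\phi$ is a continuous, $R$-linear bijection and applies the Open Mapping Theorem to the locally compact, $\sigma$-compact module $N \times N'$; you have simply written out the routine verifications in more detail.
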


\begin{proof}
	$\phi$ is continuous, $R$-linear and bijective, by hypothesis. Since $N \times N'$ is locally compact and $\sigma$-compact, $\phi$ is also open by the Open mapping Theorem.
\end{proof}

\begin{lemma}[Lemma 4.2.14 of \cite{DeitmarEchterhoff2014}]
	\label{lemma 4.2.14}
	Let $M \in LC_RM$ and $L, Q$ closed submodules of $M$ such that: \begin{itemize}
		\item $L$ and $Q$ are $\sigma$-compact.
		\item $L + Q$ is open in $M$ and $L\cap Q = 0$.
		\item $Q$ is algebraically injective as an $R$-module.
	\end{itemize}
	Then there exists a closed submodule $N$ of $M$ such that $L \subset N$, $L$ is open in $N$, and such that the map \begin{align*}
		\phi\colon & N \times Q \to M     \\
		           & (n, q) \mapsto n + q
	\end{align*}
	is an isomorphism of topological $R$-modules.
\end{lemma}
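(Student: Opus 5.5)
The plan is to mimic the proof of Lemma 4.2.14 in \cite{DeitmarEchterhoff2014}, keeping track of $R$-linearity at each step. First set $U := L + Q$. This is an open submodule of $M$, hence also closed, so it is locally compact. Since $L$ and $Q$ are $\sigma$-compact, $L \times Q$ is $\sigma$-compact. By Corollary \ref{sigma compact internal direct sum} applied inside $U$ (where $L+Q=U$ and $L\cap Q = 0$), the addition map $L \times Q \to U$ is an isomorphism of topological $R$-modules. Consequently the projection $p\colon U \to Q$, $l + q \mapsto q$, is a continuous $R$-linear map that restricts to the identity on $Q$.

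Next I will use algebraic injectivity of $Q$. Because $p$ is $R$-linear on the submodule $U \subseteq M$, it extends to an $R$-linear map $P\colon M \to Q$ with $P|_U = p$ and $P|_Q = \mathrm{Id}_Q$. The key observation is that $P$ is automatically continuous: it agrees with the continuous map $p$ on the open submodule $U$. Continuity of a group homomorphism at $0$ suffices, and $U$ is an open neighbourhood of $0$.

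Now define $N := \ker(P)$. It is a closed submodule, since $P$ is continuous and $R$-linear. It contains $L$, because $p(L) = 0$. Moreover $N \cap U = \ker(p) = L$ and $U$ is open, so $L = N \cap U$ is open in $N$.

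Finally, $P$ is a continuous surjection $M \to Q$, and the inclusion $\iota\colon Q \to M$ is a continuous $R$-linear section with $P\circ\iota = \mathrm{Id}_Q$. Lemma \ref{continuous section} then gives the isomorphism $\psi\colon M \to N \times Q$, $x \mapsto (x - P(x), P(x))$. Its inverse is exactly $(n,q)\mapsto n+q$, which is the map $\phi$, so $\phi$ is an isomorphism of topological $R$-modules.

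The only delicate step is the continuity of the extension $P$, and it is handled by the openness of $L+Q$. The $\sigma$-compactness hypothesis enters only to guarantee that $p$ is continuous, through the Open Mapping Theorem. No use of the Open Mapping Theorem on $M$ itself is needed, which matters because $M$ need not be $\sigma$-compact.
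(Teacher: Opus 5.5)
Your proposal is correct and follows the paper's proof essentially step for step. The steps match: the splitting $L\times Q\cong L+Q$ via Corollary \ref{sigma compact internal direct sum}, extension of the projection onto $Q$ by algebraic injectivity, continuity from agreement on the open submodule $L+Q$, and Lemma \ref{continuous section} with $N$ the kernel of the extension. You are slightly more explicit than the paper in checking that $L+Q$ is locally compact, so the corollary applies, and that the inverse of the splitting isomorphism is exactly $\phi$.
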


\begin{proof}
	By the previous lemma, $L \times Q \cong L + Q$ as topological $R$-modules, so we can consider the map $\psi\colon L + Q \to Q$ that sends $l + q$ to $q$, and it is continuous since it is the composition of $(l, q) \mapsto q$ with an isomorphism. Because $Q$ is algebraically injective, we can extend $\psi$ to a homomorphism $\overline{\psi}\colon M \to Q$. Since $\overline{\psi}$ is continuous on $L + Q$ and is in particular a group homomorphism, $\overline{\psi}$ is continuous. \\
	The inclusion $\iota\colon Q \to M$ is a section of $\overline{\psi}$, because $\overline{\psi}|_Q = \psi|_Q = Id_Q$, therefore by lemma \ref{continuous section}, $M \cong \ker(\overline{\psi}) \times Q$. \\
	Choose $N = \ker(\overline{\psi})$, and we get that $L \subset N$. Besides that, $N \cap (L + Q) = \ker(\overline{\psi}|_{L+Q}) = \ker(\psi) = L$. Therefore, $L$ is open in $N$.
\end{proof}

\begin{corollary}[Example 4.2.15 of \cite{DeitmarEchterhoff2014}]
	\label{Open injective submodule}
	If $M$ is a topological $R$-module and $Q$ is an open injective submodule of $M$, we can take $L = 0$ and apply the same reasoning of  Lemma \ref{lemma 4.2.14} to conclude that there exists a closed submodule $D$ of $M$ such that $M \cong Q \times D$ and such that $L = 0$ is open in $D$, that is, such that $D$ is discrete. \\
	In this particular case, by checking the above proof, it can be seen that local compactness of $M$ and $\sigma$-compactness of $Q$ are unnecessary, as a consequence of $L$ being trivial.
\end{corollary}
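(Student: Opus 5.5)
Following the proof of Lemma \ref{lemma 4.2.14} with $L = 0$, the plan is to produce a continuous $R$-linear retraction $M \to Q$ and then split $M$ with Lemma \ref{continuous section}. Start from the identity $\psi = Id_Q \colon Q \to Q$. Since $Q$ is algebraically injective as an $R$-module and $Q \subset M$ is a submodule, $\psi$ extends to an $R$-linear homomorphism $\overline{\psi}\colon M \to Q$ with $\overline{\psi}|_Q = Id_Q$. No $\sigma$-compactness is needed here, because with $L = 0$ the step in Lemma \ref{lemma 4.2.14} identifying $L + Q$ with $L \times Q$ (Corollary \ref{sigma compact internal direct sum}) is vacuous: $L + Q = Q$ and $\psi$ is just the identity.

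Next, check that $\overline{\psi}$ is continuous. It is a group homomorphism, so continuity at $0$ suffices. Since $Q$ is open, it is a neighbourhood of $0$ in $M$, and on $Q$ the map $\overline{\psi}$ is the identity, hence continuous. Therefore $\overline{\psi}$ is continuous everywhere. This uses only that $Q$ is open, not local compactness of $M$.

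Then the inclusion $\iota\colon Q \to M$ is a continuous $R$-linear section of the surjection $\overline{\psi}$. Lemma \ref{continuous section} gives an isomorphism of topological $R$-modules $M \cong \ker(\overline{\psi}) \times Q$, via $x \mapsto (x - \overline{\psi}(x), \overline{\psi}(x))$. Set $D = \ker(\overline{\psi})$. It is a closed submodule, being the kernel of a continuous $R$-linear map into a Hausdorff module. Moreover $D \cap Q = \ker(\overline{\psi}|_Q) = 0$ and $Q$ is open, so $\{0\} = D \cap Q$ is open in $D$. Hence $D$ is discrete.

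The only point needing care is the continuity of $\overline{\psi}$. This is exactly where the openness of $Q$ replaces the hypotheses of local compactness of $M$ and $\sigma$-compactness of $Q$, which were used in Lemma \ref{lemma 4.2.14} only to make $\psi$ continuous on $L + Q$. Everything else is a direct specialization of that proof.
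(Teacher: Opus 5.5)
Your proof is correct and follows the paper's argument: specialize Lemma \ref{lemma 4.2.14} to $L=0$, extend $Id_Q$ by injectivity to a retraction $\overline{\psi}\colon M\to Q$, which is continuous because it is a homomorphism that is the identity on the open submodule $Q$, and then split $M$ with Lemma \ref{continuous section}. Discreteness of $D=\ker\overline{\psi}$ then follows from $D\cap Q=0$ with $Q$ open.
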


\begin{definition}
	\label{Definition of NSS and compactly generated}
	Let $M$ be a left topological $R$-module. $M$ is said to have the no small submodules property if there exists a neighbourhood $V$ of 0 such that the only submodule of $M$ contained in $V$ is the trivial one. \\
	We also define compactly generated in a straightforward way inspired by the definition for $LCA$ groups: $M$ is compactly generated if there exists a compact subset $K$ of $M$ such that $M = RK$.

\end{definition}

\begin{prop}
	\label{no small submodules implies dual is compactly generated}
	Let $R \in LCR$ and $M \in LC_RM$ such that $\dual{M}$ has no small submodules. Then $M$ is compactly generated.
\end{prop}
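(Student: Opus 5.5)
The plan is to copy the classical argument for $LCA$ groups ("$\dual{G}$ has no small subgroups implies $G$ is compactly generated"). Annihilators turn \emph{submodules} of $M$ into submodules of $\dual{M}$ (Proposition \ref{annihilator of submodule}). So the no small submodules hypothesis on $\dual{M}$ can be fed the annihilator of a suitable open submodule of $M$. I read $M = RK$ as "$M$ is the submodule generated by $K$", i.e.\ finite sums $\sum r_i k_i$ with $r_i \in R$, $k_i \in K$.

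\textbf{Step 1 (a compact set with small annihilator).} Let $V$ be a neighbourhood of the trivial character in $\dual{M}$ containing no nontrivial submodule. The topology of $\dual{M}$ is the compact-open topology. Hence there are a compact set $C \subset M$ and some $\epsilon > 0$ with
\[
W(C,\epsilon) := \{\chi \in \dual{M} : \abs{\chi(c) - 1} < \epsilon \text{ for all } c \in C\} \subset V .
\]
By local compactness of $M$, fix a compact neighbourhood $U_0$ of $0$ and set $K := C \cup U_0$. Then $K$ is compact, is a neighbourhood of $0$, and $A(K) \subset A(C) \subset W(C,\epsilon) \subset V$.

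\textbf{Step 2 (the generated submodule is open and closed).} Let $N$ be the $R$-submodule of $M$ generated by $K$. It contains the neighbourhood $U_0$ of $0$, so $N$ is an open subgroup. An open subgroup is also closed, since its complement is a union of cosets. Moreover $N$ is closed under scalar multiplication. By Proposition \ref{annihilator of submodule}, $A(N)$ is therefore a closed submodule of $\dual{M}$.

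\textbf{Step 3 (conclude via annihilators).} Since $K \subset N$, we have $A(N) \subset A(K) \subset V$. As $V$ contains no nontrivial submodule, $A(N) = \{1\}$. Because $N$ is closed, Proposition \ref{annihilator isomorphisms}(2) gives
\[
N = \orth{A(N)} = \orth{\{1\}} = M .
\]
Hence $M = RK$ with $K$ compact, i.e.\ $M$ is compactly generated.

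The only step needing any care is Step 1. There one must use that basic neighbourhoods of the identity in $\dual{M}$ have the form $W(C,\epsilon)$, and that enlarging $C$ to a compact \emph{neighbourhood} $K$ only shrinks $A(K)$. Everything else follows directly from the module versions of the annihilator correspondence already established.
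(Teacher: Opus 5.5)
Your proof is correct and follows essentially the same route as the paper: shrink the no-small-submodules neighbourhood to a basic set $V_{K,\epsilon}(1)$ with $K$ a compact neighbourhood of $0$, observe that the submodule $N$ generated by $K$ is open (hence closed), and conclude from $A(N)\subset A(K)\subset V$ that $A(N)$ is trivial, so $N=M$. Your explicit appeal to $N=\orth{A(N)}$ for closed $N$ and your construction $K=C\cup U_0$ simply spell out steps the paper leaves implicit.
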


\begin{proof}
	Let $V$ be a neighbourhood of the identity in $\dual{M}$ that contains no nontrivial submodules. We can suppose without loss of generality that $V = V_{K, \epsilon}(1)$ for some $K \subset M$ compact neighbourhood of the identity and some $\epsilon > 0$, by Lemma \ref{neighbourhood basis on dual}.\\
	Let $N = RK$. $N$ is an open submodule of $M$, therefore also closed. At the same time, $A(N) \subset A(K) \subset V$, by definition of $V$. Since $A(N)$ is a submodule of $\dual{M}$ contained in $V$, it follows by hypothesis that $A(N) = 1$, and therefore $RK = N = M$.
\end{proof}

\noindent There is a partial converse to the above proposition:
\begin{prop}
	\label{fin generated implies no small submodules on dual}
	Let $M \in LC_RM$ be finitely generated. Then $\dual{M}$ has no small submodules. In particular, $\dual{R}^m \cong \dual{R^m}$ has this property for any $m \in \mathbb{N}$ (both as a left and right topological $R$-module).
\end{prop}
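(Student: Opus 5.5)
Let $m_1, \dots, m_k$ generate $M$ as an $R$-module, so that $M = Rm_1 + \dots + Rm_k$. The idea is to take the small neighbourhood to be the basic open set $V_{F,\epsilon}(1)$ with $F = \{m_1,\dots,m_k\}$, which is finite and hence compact, and some $\epsilon < \sqrt{2}$. It is open in the compact-open topology of $\dual{M}$ (Lemma \ref{neighbourhood basis on dual}, or directly from the definition of that topology). Then we show the only submodule of $\dual{M}$ inside it is trivial. Since the paper writes $\dual{M}$ as a right $R$-module via $\chi^r(m)=\chi(rm)$, "submodule" means a subset closed under products and under $\chi\mapsto\chi^r$.

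Let $X \subset V_{F,\epsilon}(1)$ be a submodule and fix $\chi \in X$. Then $\chi^r \in X$ for every $r \in R$, so $|\chi(rm_i) - 1| < \epsilon$ for all $r$ and all $i$. Since $R m_i$ is a subgroup of $M$, the set $\chi(Rm_i)$ is a subgroup of $\unitcircle$. It lies in the arc $\{z : |z-1|<\epsilon\}$, which contains no nontrivial subgroup of $\unitcircle$ when $\epsilon<\sqrt 2$: any $z\neq 1$ in it has some power of real part $\le 0$, hence at distance $\ge \sqrt2$ from $1$. So $\chi(Rm_i) = \{1\}$ for each $i$. As $\chi$ is a homomorphism and $M = \sum_i Rm_i$, we get $\chi \equiv 1$, so $X = \{1\}$.

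For the "in particular": $R^m$ is finitely generated by the basis vectors $e_1,\dots,e_m$, so $\dual{R^m}$ has no small submodules. The identification $\dual{R}^m \cong \dual{R^m}$ holds as modules because $F_R$ commutes with finite products. For the two-sided statement, $R^m$ is also a finitely generated right module, so the same argument applies with the roles of the sides swapped; the no-small-submodules property is intrinsic to the module structure and is transported by isomorphisms.

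The main subtlety is confirming that the finite set $F$ gives a genuine neighbourhood of $1$ in $\dual{M}$, and that $\epsilon$ is chosen so the arc has no nontrivial subgroups. Both are routine.
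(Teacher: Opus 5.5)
Your proof is correct and takes essentially the same route as the paper: a neighbourhood of the trivial character cut out by the generators, closure of a submodule under $\chi\mapsto\chi^r$, and the absence of small subgroups in $\unitcircle$. The differences are minor: you apply the arc argument to each subgroup $\chi(Rm_i)$ separately, whereas the paper uses $V_{\{m_i\},\epsilon/n}$ and the triangle inequality to put all of $\chi(M)$ in the arc at once; you also spell out the ``in particular'' clause, which the paper leaves implicit.
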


\begin{proof}
	Let $\epsilon > 0$ such that $\{z \in \unitcircle: \ \abs{z - 1} < \epsilon\}$ contains no nontrivial subgroups of $\unitcircle$ and
	$M = Rm_1 + \dots + Rm_n$ for $m_1, \dots, m_n \in M$.\\
	For each $i \in \{1, \dots, n\}$, let $V_i = V_{\{m_i\}, \frac{\epsilon}{n}}$.
	Consider $V = V_1 \cap \dots \cap V_n$. $V$ is an open neighbourhood of the identity in $\dual{M}$, and if $\chi \in V$ such that $\chi^r \in V$ for all $r \in R$, then given $m = r_1m_1 + \dots + r_nm_n$, we get
	\begin{align*}
		 & \abs{\chi(m) - 1} = \abs{\chi(r_1m_1)\dots \chi(r_nm_n) - 1} \le \abs{\chi(r_1m_1) - 1} + \dots + \abs{\chi(r_nm_n) - 1} = \\
		 & = \abs{\chi^{r_1}(m_1) - 1} + \dots + \abs{\chi^{r_n}(m_n) - 1} < n \frac{\epsilon}{n} = \epsilon
	\end{align*}
	As such, $\abs{\chi(m) - 1} < \epsilon$ for all $m \in M$, therefore $\chi(M)$ is a subgroup of $\unitcircle$ contained in $\{z: \abs{z - 1} < \epsilon\}$, which implies $\chi(M) = 1$ by definition of $\epsilon$. \\
	In other words, $V$ is a neighbourhood of the identity in $\dual{M}$ that contains no nontrivial submodules.
\end{proof}

\noindent There is another definition which intends to generalize the notion of $LCA$ groups of Lie type:
\begin{definition}
	\label{Definition of module of Lie type}
	$R_R$ is defined as the right topological module structure on $R$ over itself.\\
	Given $M \in LC_RM$, we say that $M$ is of Lie type if there exists a local isomorphism of modules between $M$ and $\dual{R_R}^n$, for some $n \ge 0$.
\end{definition}

\noindent This definition emcompasses the usual notion of Lie type for groups, because $\dual{\mathbb{Z}}^n \cong (\unitcircle)^n$ and $(\unitcircle)^n$ is locally isomorphic to $\mathbb{R}^n$.

\begin{prop}
	\label{Lie type implies NSSM}
	If $M \in LC_RM$ is of Lie type, then $M$ has no small submodules.
\end{prop}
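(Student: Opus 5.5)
The plan is to transfer the no-small-submodules property from $\dual{R_R}^n$ to $M$ through the local isomorphism. There are two steps: first show that $\dual{R_R}^n$ itself has no small submodules, then check that the property only depends on a neighbourhood of $0$ on which a local isomorphism respects the module operations.

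For the first step, note that $R_R$ is a finitely generated right $R$-module, generated by $1_R$; hence so is $R_R^n$. Proposition \ref{fin generated implies no small submodules on dual}, applied to right modules (equivalently, left modules over $R^{op}$), shows that $\dual{R_R^n} \cong \dual{R_R}^n$ has no small submodules as a left topological $R$-module. Concretely, there is a neighbourhood $W$ of the identity in $\dual{R_R}^n$ containing no nontrivial submodule. Write $G = \dual{R_R}^n$.

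For the second step, let $f\colon U \to V$ be the local isomorphism, with $U \subset M$ and $V \subset G$ open neighbourhoods of $0$. Put $V' = V \cap W$ and $U' = \inv{f}(V')$; this is an open neighbourhood of $0$ in $M$ because $f$ is a homeomorphism. Suppose $P$ is a submodule of $M$ with $P \subset U'$. Then $f(P) \subset V' \subset W$, and I claim $f(P)$ is a submodule of $G$.
\begin{itemize}
	\item For $x, y \in P$ we have $x + y \in P \subset U$, so $f(x + y) = f(x) + f(y)$.
	\item Likewise $-x \in P \subset U$ gives $f(-x) = -f(x)$.
	\item For $r \in R$ we have $rx \in P \subset U$, so $f(rx) = rf(x)$.
\end{itemize}
Thus $f(P)$ is closed under addition, negation and scalar multiplication, and it contains $f(0) = 0$. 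Since $f(P) \subset W$, the choice of $W$ forces $f(P) = 0$. Because $f$ is injective, $P = 0$. Hence $U'$ witnesses the no-small-submodules property for $M$.

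The only delicate point is that all relevant sums, negatives and scalar multiples stay inside the domain $U$, so that the defining identities of the local isomorphism apply. This is automatic here because $P$ is a submodule contained in $U$, so the argument goes through without difficulty. A secondary check is that $f(0)=0$: from $0 + 0 = 0 \in U$ we get $f(0) = 2f(0)$.
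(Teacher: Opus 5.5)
Your proof is correct and follows the paper's own argument. You choose a neighbourhood $W$ of the identity in $\dual{R_R}^n$ with no nontrivial submodules (via Proposition \ref{fin generated implies no small submodules on dual}), pull it back through the local isomorphism, show that any submodule of $M$ inside the pullback maps to a submodule contained in $W$, hence trivial, and finish by injectivity. Your version is cleaner than the paper's, whose notation mixes up the domain and codomain of the local isomorphism; you take $f$ defined on a neighbourhood of $0$ in $M$, which is the direction the defining identities are needed in.
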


\begin{proof}
	Let $\phi\colon V \to U$ be a local isomorphism, where $U \subset M$ and $V \subset \dual{R_R}^n$ are open neighbourhoods of each respective identity. Let $W \subset \dual{R_R}^n$ be an open neighbourhood of the identity that contains no nontrivial submodules and let $V' = \invimage{\phi}{W \cap U}$. If $N$ is a submodule of $M$ contained in $V'$, then by the fact that $\phi$ is local isomorphism, $\phi(N)$ is a submodule of $\dual{R_R}^n$ contained in $W$, which means $\phi(N) = 0$. Because $\phi$ is injective, this implies $N = 0$ and proves the result.
\end{proof}

\noindent The converse of this Proposition is false: For example, for a prime $p$, $\mathbb{Q}_p$ has no small $\mathbb{Z}[1/p]$-submodules, but it cannot be of Lie type because $\dual{\mathbb{Z}[1/p]}$ is a connected abelian group while $\mathbb{Q}_p$ has a neighbourhood basis consisting of compact open subgroups.

\section{Structure Theorems for LC modules over localizations of the integers}
\label{Localizations of integers}

\subsection{Rings of "Adeles" associated to localizations}
\label{Generalized Adele rings}

The rings defined below, among other things, have no compact open $\mathbb{Z}\inv{S}$-submodules and are self-dual. They are quite helpful for the Structure Theorems over $\mathbb{Z}\inv{S}$ because they allow to "separate" any $LC$ $\mathbb{Z}\inv{S}$-module into a component that can be described numerically and a component that has a compact open submodule.
\begin{definition}
	\label{generalized adele rings}
	Let $\Sigma$ be a set of prime numbers, $S$ the multiplicative submonoid of $\mathbb{N}$ generated by $\Sigma$ and $\mathbb{Z}\inv{S} \subset \mathbb{Q}$ the localization of $\mathbb{Z}$ with respect to $S$.
	\begin{itemize}
		\item The ring of Adeles associated to $S$, denoted by $\mathbb{A}_{\mathbb{Z}\inv{S}}$, is the restricted product $\mathbb{R} \times \sideset{}{'}\prod_{q \in \Sigma}(\mathbb{Q}_q, \mathbb{Z}_q)$.
		\item The ring of finite Adeles associated to $S$, denoted by $\mathbb{A}_{\mathbb{Z}\inv{S}, f}$, is the above restricted product without the $\mathbb{R}$ component.
		\item The ring of $\Sigma$-adic integers is the product $\mathbb{Z}_\Sigma := \prod_{q \in \Sigma} \mathbb{Z}_q$. It is in particular a quotient of $\hat{\mathbb{Z}}$ and is also a compact open subring of $\mathbb{A}_{\mathbb{Z}\inv{S}, f}$.
	\end{itemize}
	All of these are locally compact topological rings, and they will be quite important for locally compact $\mathbb{Z}\inv{S}$-modules. In particular, $\mathbb{A}_{\mathbb{Z}\inv{S}}$ will have an analogous role in $LC_{\mathbb{Z}\inv{S}}M$ that $\mathbb{R}$ has in $LCA$.
\end{definition}

\begin{example}
	If $\Sigma = \mathbb{P}$, the set of all primes, then $\mathbb{Z}\inv{S} = \mathbb{Q}$ and indeed, $\mathbb{A}_\mathbb{Q} = \prod_{q \in \Sigma}^{'}\mathbb{Q}_p$ is typically known as the ring of Adeles of the rational numbers, and the ring of $\mathbb{P}$-integers is $\prod_{q \ prime} \mathbb{Z}_q \cong \hat{\mathbb{Z}}$. \\
	If we instead take $\Sigma = \{p_1, \dots, p_n\}$ as a finite set of primes, then $\mathbb{Z}\inv{S} = Z[1/n]$, where $n = p_1 \dots p_n$, and the ring of $\Sigma$-adic integers is the product $\prod_{i = 1}^{n} \mathbb{Z}_{p_i} \cong \mathbb{Z}_n$ (the ring of n-adic integers). \\
\end{example}

\begin{lemma}
	\label{p-adic numbers are quotient of finite Adele ring}
	For each $p \in \Sigma$, $\mathbb{Q}_p$ is a quotient of $\mathbb{A}_{\mathbb{Z}\inv{S}, f}$. \\
	The obvious kernel of this quotient will be constantly denoted as $\prod_{q \in \Sigma \setminus \{p\}} \mathbb{Q}_q$.
\end{lemma}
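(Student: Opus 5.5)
The natural approach is to write down the coordinate projection explicitly. Define
\[
	\pi_p\colon \mathbb{A}_{\mathbb{Z}\inv{S}, f} \to \mathbb{Q}_p, \qquad (x_q)_{q \in \Sigma} \mapsto x_p .
\]
I would then check three things: that $\pi_p$ is a continuous ring homomorphism, that it is surjective, and that it is open. Once these hold, $\pi_p$ induces an isomorphism of topological rings (and of topological $\mathbb{Z}\inv{S}$-modules)
\[
	\mathbb{A}_{\mathbb{Z}\inv{S}, f}/\ker(\pi_p) \cong \mathbb{Q}_p .
\]
The kernel consists of the adeles with vanishing $p$-coordinate, which is exactly the restricted product over $\Sigma \setminus \{p\}$. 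This justifies the notation $\prod_{q \in \Sigma \setminus \{p\}} \mathbb{Q}_q$ used in the statement.

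Additivity and multiplicativity of $\pi_p$ are clear, since the ring operations are coordinatewise. Surjectivity is also immediate: given $x \in \mathbb{Q}_p$, the element with $p$-coordinate $x$ and all other coordinates $0$ lies in the restricted product, because all but finitely many of its coordinates lie in $\mathbb{Z}_q$.

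Continuity and openness are best checked on the standard basis of the restricted product topology. That basis consists of sets $\prod_{q \in F} U_q \times \prod_{q \notin F} \mathbb{Z}_q$, where $F \subset \Sigma$ is finite and each $U_q \subset \mathbb{Q}_q$ is open.
\begin{itemize}
	\item For continuity, it suffices to check at $0$. The preimage of $p^k\mathbb{Z}_p$ contains the basic open set $p^k\mathbb{Z}_p \times \prod_{q \neq p} \mathbb{Z}_q$.
	\item For openness, the image of a basic open set is either $U_p$ (if $p \in F$) or $\mathbb{Z}_p$ (if $p \notin F$). Both are open in $\mathbb{Q}_p$, and images of unions are unions of images.
\end{itemize}

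Alternatively, openness follows from the Open Mapping Theorem (Theorem \ref{Open mapping thm}) when $\Sigma$ is countable, which it always is. In that case $\mathbb{A}_{\mathbb{Z}\inv{S}, f}$ is $\sigma$-compact, being a countable union of the compact sets $\frac{1}{n}\mathbb{Z}_\Sigma$. Even so, the direct check above is simpler.

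The main (mild) obstacle is the openness step. It requires being careful that basic open sets of the restricted product really have the stated form and that projecting one of them lands on an open set. This is where the restricted product topology, rather than the product topology, matters.
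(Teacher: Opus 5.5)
Your proof is correct. It follows the same outline as the paper's: the coordinate projection $\pi_p$, surjectivity via the element supported at $p$, and continuity at $0$ via preimages of $p^k\mathbb{Z}_p$.

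The one real difference is how you get openness. The paper never checks openness directly. It obtains it from the Open Mapping Theorem, using that $\mathbb{A}_{\mathbb{Z}\inv{S}, f}$ is $\sigma$-compact, and it invokes that fact at this point without argument; it is effectively justified only later, through Proposition \ref{guaranteed sigma compactness}.

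Your check is that the projection of a basic open set $\prod_{q \in F} U_q \times \prod_{q \notin F} \mathbb{Z}_q$ is $U_p$, $\mathbb{Z}_p$, or empty. This is more elementary and self-contained, since it needs neither $\sigma$-compactness nor the Open Mapping Theorem. It also makes clear that openness comes straight from the shape of the restricted product topology.

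The paper's route is shorter once that machinery is available. Your side remark, that $\mathbb{A}_{\mathbb{Z}\inv{S}, f} = \bigcup_{n} \frac{1}{n}\mathbb{Z}_\Sigma$ with each $\frac{1}{n}\mathbb{Z}_\Sigma$ compact, supplies exactly the $\sigma$-compactness the paper leaves unproved at this stage. So your alternative via the Open Mapping Theorem is also complete.
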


\begin{proof}
	Consider the projection $\pi_p\colon \mathbb{A}_{\mathbb{Z}\inv{S}, f} \to \mathbb{Q}_p$. It is a surjective group homomorphism, so we only need to check continuity and openness at the identity. Because $\mathbb{A}_{\mathbb{Z}\inv{S}, f}$ is $\sigma$-compact, continuity suffices to prove both, by the Open Mapping Theorem. \\
	Given $n \in \mathbb{Z}$, $\invimage{\pi_p}{p^n\mathbb{Z}_p}$ can be written as \ $\sideset{}{'}\prod_{q \in \Sigma}X_q$, where \[
		X_q := \begin{cases*}
			(\mathbb{Q}_q, \mathbb{Z}_q), \ \text{if} \ q \neq p \\
			(p^n\mathbb{Z}_p, p^n\mathbb{Z}_p) \ otherwise,
		\end{cases*}
	\]
	since this restricted product is clearly a subset of $\mathbb{A}_{\mathbb{Z}\inv{S}, f}$. It is also open since it can be written as \[
		\bigcup_{\mathclap{\substack{g\colon \Sigma \to \mathbb{Z} \\ |supp(g)| < +\infty \\ g(p) = n}}} \quad \quad \left(\prod_{q \in \Sigma} q^{g(q)}\mathbb{Z}_q \right),
	\]
	which is an union of open sets. \\ Therefore $\pi_p$ is continuous, which proves the result. \\
\end{proof}

The relation between $\mathbb{A}_{\mathbb{Z}\inv{S}, f}$ and $\mathbb{Z}_\Sigma$ is somewhat similar to that between $\mathbb{Q}_p$ and $\mathbb{Z}_p$, for a fixed prime p, as will be seen in the two following propositions.

\begin{prop}
	\label{canonical localization relation}
	$\mathbb{A}_{\mathbb{Z}\inv{S}, f}$ can be algebraically seen as the localization of $\mathbb{Z}_{\Sigma}$ over $S$, as an abelian group.
\end{prop}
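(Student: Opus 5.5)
The plan is to show that the natural map $\mathbb{Z}_\Sigma \otimes_{\mathbb{Z}} \mathbb{Z}\inv{S} \to \mathbb{A}_{\mathbb{Z}\inv{S}, f}$ is an isomorphism of abelian groups. Equivalently, we verify the universal property of the localization $S^{-1}\mathbb{Z}_\Sigma$ directly. Three facts are needed:
\begin{enumerate}
	\item the inclusion $\iota\colon \mathbb{Z}_\Sigma \to \mathbb{A}_{\mathbb{Z}\inv{S}, f}$ is an injective ring homomorphism;
	\item every $s \in S$ acts invertibly on $\mathbb{A}_{\mathbb{Z}\inv{S}, f}$;
	\item every element of $\mathbb{A}_{\mathbb{Z}\inv{S}, f}$ has the form $\frac{1}{s}\iota(x)$ with $s \in S$ and $x \in \mathbb{Z}_\Sigma$.
\end{enumerate}
With these in hand, the standard characterization of localizations gives $S^{-1}\mathbb{Z}_\Sigma \cong \mathbb{A}_{\mathbb{Z}\inv{S}, f}$ via $x/s \mapsto \frac{1}{s}\iota(x)$. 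Injectivity of this map reduces to the fact that $\mathbb{Z}_\Sigma$ has no $S$-torsion: if $\frac{1}{s}\iota(x) = 0$ then $\iota(x)=0$, so $x = 0$.

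For (2), take $s = \prod_{q \in \Sigma} q^{a_q}$ with finitely many nonzero $a_q$. Componentwise, $s$ is a unit in every $\mathbb{Q}_q$, so $s^{-1}$ is the adele $(s^{-1})_q$. This adele lies in the restricted product, since $s$ is a $q$-adic unit for all but finitely many $q$ (namely all $q$ not dividing $s$). Multiplication by $s^{-1}$ therefore preserves $\mathbb{A}_{\mathbb{Z}\inv{S}, f}$ and inverts multiplication by $s$. Injectivity of $\iota$ in (1) is immediate, since $\mathbb{Z}_\Sigma = \prod_q \mathbb{Z}_q$ sits inside the restricted product.

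For (3), which is the only step with content, let $a = (a_q)_{q\in\Sigma}$ be an adele. By definition of the restricted product there is a finite set $F \subset \Sigma$ with $a_q \in \mathbb{Z}_q$ for $q \notin F$. For $q \in F$, write $v_q(a_q) \ge -n_q$ with $n_q \ge 0$, and put $s = \prod_{q \in F} q^{n_q} \in S$. Then $(sa)_q = s a_q$ lies in $\mathbb{Z}_q$ for every $q$:
\begin{itemize}
	\item for $q \in F$, because $v_q(s a_q) \ge n_q - n_q = 0$;
	\item for $q \notin F$, because $s \in \mathbb{Z}$ and $a_q \in \mathbb{Z}_q$.
\end{itemize}
Hence $sa = \iota(x)$ for some $x \in \mathbb{Z}_\Sigma$, and $a = \frac{1}{s}\iota(x)$. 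The key observation is that $s$ is a unit at every prime of $\Sigma$ outside $F$, so clearing denominators at finitely many places does not disturb integrality elsewhere.

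The main obstacle is therefore only bookkeeping: making sure $\frac{1}{s}$ is interpreted inside the restricted product, as in (2), and matching the equivalence relation $x/s = y/t \iff tx = sy$ of the localization. The latter follows because the relation holds in $\mathbb{A}_{\mathbb{Z}\inv{S}, f}$ and $\iota$ is injective with no $S$-torsion. Additivity of $x/s \mapsto \frac{1}{s}\iota(x)$ is routine, using the common denominator $st$.
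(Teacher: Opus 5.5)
Your proposal is correct and follows the paper's route: your step (3), which clears the finitely many negative valuations with $s=\prod_{q\in F} q^{n_q}\in S$, is exactly the paper's argument. The paper stops there and leaves implicit what you check in (1), (2) and the injectivity step, namely that $S$ acts invertibly on $\mathbb{A}_{\mathbb{Z}\inv{S}, f}$ and that $\mathbb{Z}_\Sigma$ has no $S$-torsion, so your version is a more complete write-up of the same proof.
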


\begin{proof}
	Let $(x_q)_{q \in \Sigma} \in \mathbb{A}_{\mathbb{Z}\inv{S}, f}$. Since only a finite number of the $x_q$'s have negative $q$-adic valuation, we can consider \[
		s = \prod_{\mathclap{\substack{q \in \Sigma \\ v_q(x_q) < 0}}} q^{-v_q(x_q)} \in S,
	\] and we get that \[
		(x_q)_{q \in \Sigma} = \frac{1}{s}(y_q)_{q \in \Sigma},
	\]
	where each $y_q \in \mathbb{Z}_q$.
\end{proof}

\begin{prop}
	\label{guaranteed sigma compactness}
	Let $Q \in LC_{\mathbb{A}_{\mathbb{Z}\inv{S}, f}}M$ and let $C$ be any compact open subgroup of $C$. Then $Q$ is generated over $\mathbb{Z}\inv{S}$ by $C$.  \\
	In particular, $Q$ is $\mathbb{Z}\inv{S}$-compactly generated and therefore $\sigma$-compact by Proposition \ref{compactly generated over sigma compact ring}.
\end{prop}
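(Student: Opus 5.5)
Let $C$ be a compact open subgroup of $Q$ (the statement's ``subgroup of $C$'' should read ``of $Q$''). We want to show that the $\mathbb{Z}\inv{S}$-submodule generated by $C$, which is $\bigcup_{s \in S}\frac{1}{s}C$, is all of $Q$.

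The first step is to reduce to a compact open $\mathbb{Z}_\Sigma$-submodule. Since $\mathbb{Z}_\Sigma$ is compact and $Q$ is a topological module over $\mathbb{A}_{\mathbb{Z}\inv{S}, f} \supset \mathbb{Z}_\Sigma$, the set $\mathbb{Z}_\Sigma C$ is compact, being the image of $\mathbb{Z}_\Sigma \times C$ under a continuous map. It is also a subgroup. To see this, note that $\mathbb{Z}$ is dense in $\mathbb{Z}_\Sigma$ and $C$ is a closed subgroup. By continuity of $x \mapsto xc$, we get $xc \in \closure{\mathbb{Z}c} \subset C$ for every $x \in \mathbb{Z}_\Sigma$ and $c \in C$. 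Hence $\mathbb{Z}_\Sigma C = C$. In other words, $C$ is already a $\mathbb{Z}_\Sigma$-submodule. This is the same density argument used for $\hat{\mathbb{Z}}$ in Subsection \ref{LC modules over the profinite integers}, using that $\mathbb{Z}_\Sigma$ is a quotient of $\hat{\mathbb{Z}}$.

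The second step is to show that $C$ absorbs every element of $Q$ after scaling by some $s \in S$. Fix $m \in Q$. The map $x \mapsto xm$ from $\mathbb{A}_{\mathbb{Z}\inv{S}, f}$ to $Q$ is continuous at $0$ and $C$ is an open neighbourhood of $0$. So there is an open neighbourhood $U$ of $0$ in $\mathbb{A}_{\mathbb{Z}\inv{S}, f}$ with $Um \subset C$. The sets $\prod_{q\in\Sigma} q^{g(q)}\mathbb{Z}_q$, with $g \ge 0$ of finite support, form a neighbourhood basis of $0$ in $\mathbb{Z}_\Sigma$, which is open in the finite Adeles. Hence $U$ contains $s\mathbb{Z}_\Sigma$ for $s = \prod_q q^{g(q)} \in S$. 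In particular $sm \in C$, and therefore $m = \frac{1}{s}(sm) \in \frac{1}{s}C \subset \mathbb{Z}\inv{S}C$. This shows $Q = \mathbb{Z}\inv{S} C$.

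The main obstacle is justifying that the sets $s\mathbb{Z}_\Sigma$, $s \in S$, really form a neighbourhood basis of $0$. The restricted product topology on $\mathbb{Z}_\Sigma$ is the product topology, whose basic neighbourhoods of $0$ constrain only finitely many coordinates, say $q^{k_q}\mathbb{Z}_q$ for $q$ in a finite set $F$. Taking $s = \prod_{q \in F} q^{k_q}$ gives $s\mathbb{Z}_\Sigma \subset$ that neighbourhood, because for $q \notin F$ the factor $s$ is a unit in $\mathbb{Z}_q$. One must also check that the $\mathbb{Z}\inv{S}$-action on $Q$ is the one induced from $\mathbb{A}_{\mathbb{Z}\inv{S}, f}$ via the diagonal embedding. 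This holds because $\mathbb{Z}\inv{S}$-module structures on abelian groups are unique, as noted in the introduction.

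Finally, $Q$ is compactly generated over $\mathbb{Z}\inv{S}$ by the compact set $C$. Since $\mathbb{Z}\inv{S}$ is countable and discrete, $Q = \bigcup_{r \in \mathbb{Z}\inv{S}} rC$ is a countable union of compact sets. This gives $\sigma$-compactness directly, consistent with Proposition \ref{compactly generated over sigma compact ring}.
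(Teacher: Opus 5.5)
Your proof is correct, but it takes a different and more elementary route than the paper.

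The paper first uses that $Q$ is elliptic and totally disconnected, via Pontryagin duality, to see that $Q/C$ is discrete torsion. This gives $nm \in C$ for some positive integer $n$, which need not lie in $S$. It then has to rewrite $\frac{1}{n}$ in $\mathbb{A}_{\mathbb{Z}\inv{S}, f}$ as $\frac{1}{s}y$ with $y \in \mathbb{Z}_\Sigma$ (Proposition \ref{canonical localization relation}), and use $\mathbb{Z}_\Sigma C \subset C$ to conclude $m \in \frac{1}{s}C$.

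You instead use only two facts. One is continuity of $x \mapsto xm$ at $0$. The other is that the sets $s\mathbb{Z}_\Sigma$, $s \in S$, form a neighbourhood basis of $0$ in the finite Adeles. Together these give directly an $s \in S$ with $sm \in C$. This bypasses duality, the torsion argument and the localization description entirely.

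In fact your first step ($\mathbb{Z}_\Sigma C = C$) is never used. Since $s \in s\mathbb{Z}_\Sigma \subset U$, you already get $sm \in C$, so that step can be dropped. Your closing observation also helps: $Q = \bigcup_{s \in S}\frac{1}{s}C$ is a countable union of compact sets, so $\sigma$-compactness follows without Proposition \ref{compactly generated over sigma compact ring}.

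The paper's route fits its general framework of elliptic, totally disconnected modules and exhibits the torsion structure of $Q/C$. For this particular statement, however, your argument is shorter and self-contained.
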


\begin{proof}
	$Q$ is necessarily elliptic and totally disconnected, so for $C$ compact and open, we have that $\mathbb{Z}_{\Sigma}C \subset C$ (because $\mathbb{Z}_{\Sigma}$ is a quotient of $\hat{\mathbb{Z}}$ and topological $\hat{\mathbb{Z}}$-module structures are unique), while the quotient group $Q/C$ is discrete-torsion (since its dual is profinite). Therefore, given $q \in Q$, there exists $n \in N$ such that $nq \in C$. Note as well that by Proposition \ref{canonical localization relation}, we can write \[
		\frac{1_{\mathbb{Z}_{\Sigma}}}{n} = \frac{1}{s}(y_p)_{p \in \Sigma},
	\]
	for some $s \in S$ and where each $y_p \in \mathbb{Z}_p$. As such, \[
		q = (\frac{1_{\mathbb{Z}_{\Sigma}}}{n})nq = (\frac{1}{s}(y_p)_{p \in \Sigma})nq \in \frac{1}{s} (\mathbb{Z}_{\Sigma}C) \subset \frac{1}{s} C.
	\]
	Therefore, $Q$ is generated by $C$ as a $\mathbb{Z}\inv{S}$-module (reminder that the last manipulation makes sense because $Q$ is a $(\mathbb{Z}\inv{S}, \mathbb{A}_{\mathbb{Z}\inv{S}, f})$-bimodule).
\end{proof}

Proposition 4-13 of \cite{DeitmarEchterhoff2014} implies in particular that any locally compact topological vector space over $\mathbb{Q}_p$ is is isomorphic to a finite power of $\mathbb{Q}_p$. As will be shown in Theorem \ref{modules over finite adele ring}, locally compact modules over $\mathbb{A}_{\mathbb{Z}\inv{S}, f}$ can be classified by using this fact. \\
Before doing that however, it is useful to note that restricted products of locally compact vector spaces over fields of p-adic numbers do not depend on the choice of compact open subgroups, by the following theorem.

\begin{theorem}
	\label{invariance of choice of compact open subgroup for p-adic vector spaces}
	Let $(Q_q)_{q \in \Sigma}$ be a family of $LCA$ groups such that each $Q_q$ is a topological $\mathbb{Q}_q$-vector space, and let $Z_q$ be a compact open subgroup of $Q_q$ for each $q \in \Sigma$. Then there exists a sequence $(n_q)_{q \in \Sigma}$, where each $n_q \in \mathbb{N}_0$ and \[
		\sideset{}{'}\prod_{q \in \Sigma}(Q_q, Z_q) \cong \sideset{}{'} \prod_{q \in \Sigma} (\mathbb{Q}_q^{n_q}, \mathbb{Z}_q^{n_q}),
	\]
	Because of this, restricted products of this form will be denoted as $\prod_{q \in \Sigma}^{'}Q_q \cong \sideset{}{'}\prod_{q \in \Sigma}\mathbb{Q}_q^{n_q}$. \\
	As a corollary, we also have that $Q = \sideset{}{'}\prod_{q \in \Sigma}Q_q$ has no small $\mathbb{Z}\inv{S}$-submodules (because the identity neighbourhood \  $\prod_{q \in \Sigma}\mathbb{Z}_q^{n_q} \subset \sideset{}{'}\prod_{q \in \Sigma}\mathbb{Q}_q^{n_q}$ does not contain any nontrivial $\mathbb{Z}\inv{S}$-submodules).
\end{theorem}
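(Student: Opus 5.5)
The plan is to first classify each factor and then move the chosen compact open subgroups to the standard lattices, one prime at a time. Restricted products only depend on the compact open subgroups up to finitely many coordinates, so this coordinatewise approach should suffice.

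Fix $q \in \Sigma$. By Proposition 4-13 of \cite{DeitmarEchterhoff2014} (recalled just before the statement), the locally compact topological $\mathbb{Q}_q$-vector space $Q_q$ is isomorphic to $\mathbb{Q}_q^{n_q}$ for a unique $n_q \in \mathbb{N}_0$. Fix such an isomorphism $\phi_q$ and transport $Z_q$ to $Z_q' := \phi_q(Z_q)$, a compact open subgroup of $\mathbb{Q}_q^{n_q}$.

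\textbf{Step 1.} Show that $Z_q'$ is a $\mathbb{Z}_q$-lattice. It is a closed subgroup, so it is a $\mathbb{Z}$-submodule. Since $\mathbb{Z}$ is dense in $\mathbb{Z}_q$ and multiplication is continuous, it is a $\mathbb{Z}_q$-submodule. Being compact, it is bounded, so it is contained in $q^{-a}\mathbb{Z}_q^{n_q}$ for some $a$. Being open, it contains $q^{b}\mathbb{Z}_q^{n_q}$ for some $b$. Hence $Z_q'$ is a finitely generated torsion-free $\mathbb{Z}_q$-module of rank $n_q$, that is, free of rank $n_q$ because $\mathbb{Z}_q$ is a PID. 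A $\mathbb{Z}_q$-basis of $Z_q'$ is then a $\mathbb{Q}_q$-basis of $\mathbb{Q}_q^{n_q}$. The change of basis gives $g_q \in GL_{n_q}(\mathbb{Q}_q)$ with $g_q(Z_q') = \mathbb{Z}_q^{n_q}$, and $g_q$ is a homeomorphism, being linear on a finite-dimensional space. So $\psi_q := g_q\circ\phi_q$ is an isomorphism of topological $\mathbb{Q}_q$-vector spaces satisfying $\psi_q(Z_q) = \mathbb{Z}_q^{n_q}$ exactly.

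\textbf{Step 2.} Take the product map $\Psi = \prod_q \psi_q$. It is an algebraic isomorphism $\prod_q Q_q \to \prod_q \mathbb{Q}_q^{n_q}$. Since $\psi_q(Z_q) = \mathbb{Z}_q^{n_q}$ for every $q$, it restricts to a bijection between the restricted products. It is a homeomorphism because it maps the basic open subgroup $\prod_q Z_q$ (with its product topology) homeomorphically onto $\prod_q \mathbb{Z}_q^{n_q}$, as a product of homeomorphisms. Being a group isomorphism that is a homeomorphism near $0$, it is a topological isomorphism. It is also $\mathbb{Z}\inv{S}$-linear, and in fact linear over the finite Adeles, since each $\psi_q$ is $\mathbb{Q}_q$-linear.

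\textbf{Step 3 (corollary).} Suppose $N$ is a $\mathbb{Z}\inv{S}$-submodule contained in $\prod_q \mathbb{Z}_q^{n_q}$, and let $x \in N$. For every $s \in S$ we have $x/s \in N$. Fix $q \in \Sigma$; taking $s = q^k$ shows that $x_q \in q^k\mathbb{Z}_q^{n_q}$ for all $k$, so $x_q = 0$. Hence $x = 0$.

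The only step with real content is Step 1, the normalization of an arbitrary compact open subgroup to the standard lattice. The point to be careful about is that it works coordinatewise with no finiteness requirement, because we may adjust \emph{every} coordinate rather than just cofinitely many. The subtlety I expect is justifying that $Z_q'$ is a $\mathbb{Z}_q$-module, via density of $\mathbb{Z}$ in $\mathbb{Z}_q$, and that it is bounded, via compactness in the norm topology.
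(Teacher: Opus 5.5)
Your proof is correct, but it differs from the paper's at both substantive points, and at the first one your route is the one that actually works. The paper normalizes $\phi_p(Z_p)$ by citing Corollary~\ref{compact open subgroups of p-adic vector space}, which says every compact open subgroup of $\mathbb{Q}_p^{n}$ is a box $\prod_i p^{k_i}\mathbb{Z}_p$, and then it only rescales each coordinate. That corollary is false for $n\ge 2$. The subgroup $K=\{(x,y)\in\mathbb{Z}_p^2 : x\equiv y \pmod{p}\}$ is compact and open and both of its coordinate projections are $\mathbb{Z}_p$, yet it has index $p$ in $\mathbb{Z}_p^2$. Since diagonal automorphisms and their inverses carry boxes to boxes, no diagonal rescaling can send $K$ to $\mathbb{Z}_p^2$. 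Your Step 1 is exactly the repair the argument needs: a compact open subgroup is a $\mathbb{Z}_q$-submodule squeezed between $q^{b}\mathbb{Z}_q^{n_q}$ and $q^{-a}\mathbb{Z}_q^{n_q}$, hence free of rank $n_q$, hence carried onto $\mathbb{Z}_q^{n_q}$ by a suitable $g_q\in\mathrm{GL}_{n_q}(\mathbb{Q}_q)$. For the $K$ above, that is the map sending $(1,1),(0,p)$ to $e_1,e_2$. For the topological part, the paper checks that $f$ maps basic neighbourhoods onto open sets. It then uses $\sigma$-compactness (Proposition~\ref{guaranteed sigma compactness}) and the Open Mapping Theorem to obtain continuity. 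You instead observe that $\Psi$ restricts to a product of homeomorphisms between the open subgroups $\prod_q Z_q$ and $\prod_q\mathbb{Z}_q^{n_q}$, whose subspace topologies are the product topologies. This gives bicontinuity directly, with no $\sigma$-compactness input, and it also shows $\Psi$ is linear over the finite Adeles. Your Step 3 just makes explicit the parenthetical argument the paper gives for the no-small-submodules claim.
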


\begin{proof}
	By Proposition 4-13 of \cite{RamakrishnanValenza1999}, for each $p \in \Sigma$, there exists an isomorphism $\phi_p: Q_p \to \mathbb{Q}_p^{n_p}$, for some $n_p \ge 0$, but we can suppose without loss of generality that $n_p \ge 1$ for convenience. \\
	As $Z_p$ is a compact open subgroup of $Q_p$, its image under $\phi_p$ is of the form  $\prod_{i = 1}^{n_p}p^{k_{i, p}}\mathbb{Z}_p$, with $k_{i, p} \in \mathbb{Z}$, by Corollary \ref{compact open subgroups of p-adic vector space}. But $\phi_p$ can be composed with the automorphism of $\mathbb{Q}_p^{n_p}$ that is multiplication by $(p^{-k_{i, p}})_{i = 1}^{n_p}$ and therefore we can suppose without loss of generality that $\phi_p(Z_p) = \mathbb{Z}_p^{n_p}$. \\
	So we can consider \begin{align*}
		f: \sideset{}{'}\prod_{q \in \Sigma} (Q_q, Z_q) & \to \sideset{}{'}\prod_{q \in \Sigma} (Q_q^{n_q}, \mathbb{Z}_q^{n_q}) \\
		(v_q)_{q \in \Sigma}                            & \mapsto (\phi_q(v_q))_{q \in \Sigma}.
	\end{align*}
	$f$ is a composition of homomorphisms, so it is an abelian group homomorphism (making it $\mathbb{Z}\inv{S}$-linear), and it is bijective as a consequence of each $\phi_p$ being an isomorphism that sends $Z_p$ to $\mathbb{Z}_p^{n_p}$. \\
	By Proposition \ref{guaranteed sigma compactness}, $\prod_{q \in \Sigma}^{'}(\mathbb{Q}_q^{n_q}, \mathbb{Z}_q^{n_q})$ is $\sigma$-compact, so as long as $f$ is open, which corresponds to saying that its inverse is continuous, $f$ will also be continuous and therefore an isomorphism by the Open Mapping Theorem. \\
	It suffices to check openness at the identity, so let \[
		U = \prod_{q \in \Sigma} U_q,
	\]
	where each $U_q$ is a compact open subgroup of $Q_q$, of the form  $\invimage{\phi_q}{\prod_{i=1}^{n_q} q^{l_{i, q}}\mathbb{Z}_q}$, for $l_{i, q} \in \mathbb{Z}$, with all but a finite number of the $U_q$'s different from $Z_q$. \\
	We have that \[
		f(U) = \prod_{q \in \Sigma} (\prod_{i = 1}^{n_q}  q^{l_{i, q}}\mathbb{Z}_q).
	\]
	This image is a subgroup of $\sideset{}{'}\prod_{q \in \Sigma}(\mathbb{Q}_q^{n_q}, \mathbb{Z}_q^{n_q})$ that contains $\prod_{q \in \Sigma}q^{l_q}\mathbb{Z}_q^{n_q}$, where $l_q = \max_{i = 1, \dots, n_q}\{l_{i, q}\}$. Because all but finitely many of the $U_q$'s are different from $Z_q$, it follows that all but finitely many of the $l_q$'s are non-zero, therefore $(q^{-l_q})_{q \in \Sigma}$ is an element of $\mathbb{A}_{\mathbb{Z}\inv{S}, f}$. \\
	As such, $\prod_{q \in \Sigma}q^{l_q}\mathbb{Z}_q^{n_q}$ is homeomorphic to $\prod_{q \in \Sigma}\mathbb{Z}_q^{n_q}$, through multiplication by $(q^{-l_q})_{q \in \Sigma}$ (since it is an invertible element in $\mathbb{A}_{\mathbb{Z}\inv{S}, f}$), therefore open.  \\
	In other words, $f(U)$ is a subgroup with non-empty interior, which makes it open. As $f$ sends basic open sets at the identity to open sets, $f$ is open.
\end{proof}

\begin{corollary}
	\label{self dual modules over finite adele ring}
	For $(Q_q, Z_q)_{q \in \Sigma}$ as above and $Q = \sideset{}{'}\prod_{q \in \Sigma}Q_q$, $Q$ is self-dual as a topological $\mathbb{Z}\inv{S}$-module.
\end{corollary}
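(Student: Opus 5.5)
By Theorem \ref{invariance of choice of compact open subgroup for p-adic vector spaces}, $Q \cong \sideset{}{'}\prod_{q \in \Sigma}(\mathbb{Q}_q^{n_q}, \mathbb{Z}_q^{n_q})$. Every abelian group homomorphism between $\mathbb{Z}\inv{S}$-modules is $\mathbb{Z}\inv{S}$-linear, so it suffices to produce an isomorphism of topological groups $Q \to \dual{Q}$. The plan is to mimic the classical self-duality of $\mathbb{A}_\mathbb{Q}$, using a product of local characters.

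First fix, for each prime $q$, the standard character $\chi_q\colon \mathbb{Q}_q \to \unitcircle$, $\chi_q(x) = e^{2\pi i \{x\}_q}$, where $\{x\}_q$ is the $q$-adic fractional part. It is continuous, trivial exactly on $\mathbb{Z}_q$, and $y \mapsto \chi_q(\,\cdot\,y)$ is the classical isomorphism $\mathbb{Q}_q \cong \dual{\mathbb{Q}_q}$. For $x = (x_{q,i}), y = (y_{q,i}) \in Q$, set
\[
\langle x, y\rangle = \prod_{q \in \Sigma}\prod_{i=1}^{n_q}\chi_q(x_{q,i}y_{q,i}).
\]
Only finitely many factors are nontrivial, since for almost all $q$ both coordinates lie in $\mathbb{Z}_q^{n_q}$, so their products lie in $\mathbb{Z}_q$. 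The pairing is biadditive. Define $\Phi\colon Q \to \dual{Q}$ by $\Phi(y) = \langle \cdot, y\rangle$.

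The steps, in order, are as follows.
\begin{enumerate}
	\item $\Phi(y)$ is continuous. Choose a finite set $F \subset \Sigma$ outside of which $y_q \in \mathbb{Z}_q^{n_q}$. On the open subgroup $\prod_{q \in F} q^{k_q}\mathbb{Z}_q^{n_q} \times \prod_{q \notin F}\mathbb{Z}_q^{n_q}$, with $k_q$ large, $\Phi(y)$ is trivial.
	\item $\Phi$ is injective. If $y_{q,i} \neq 0$, pick $x$ supported in the single coordinate $(q,i)$ with $\chi_q(x_{q,i}y_{q,i}) \neq 1$.
	\item $\Phi$ is surjective. Given $\psi \in \dual{Q}$, restrict it to each factor $\mathbb{Q}_q^{n_q}$, which embeds as a closed subgroup. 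Local self-duality gives a unique $y_q$ with $\psi|_{\mathbb{Q}_q^{n_q}} = \langle \cdot, y_q\rangle$. By the lemma in Subsection \ref{subgp of compact elements}, $\psi$ is trivial on some compact open subgroup, which contains $\prod_{q\notin F}\mathbb{Z}_q^{n_q}$ for some finite $F$. Hence $y_q \in \mathbb{Z}_q^{n_q}$ for almost all $q$, and $y = (y_q) \in Q$. Then $\psi$ and $\Phi(y)$ agree on the dense subgroup $\bigoplus_q \mathbb{Q}_q^{n_q}$, so they are equal.
	\item $\Phi$ is continuous and open. Since $Q$ is $\sigma$-compact (Proposition \ref{guaranteed sigma compactness}), continuity of the bijection $\Phi$ gives openness by the Open Mapping Theorem. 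For continuity at $0$, use the basic neighbourhoods $V_{K,\epsilon}$ of Lemma \ref{neighbourhood basis on dual} with $K = \prod_q q^{-a_q}\mathbb{Z}_q^{n_q}$, where $a_q = 0$ for almost all $q$; every compact set lies in such a $K$. Then $\Phi\bigl(\prod_q q^{a_q}\mathbb{Z}_q^{n_q}\bigr) \subset A(K) \subset V_{K,\epsilon}$, and this preimage is open.
\end{enumerate}

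I expect the main obstacle to be step 3: extracting the "almost all integral" condition from continuity of $\psi$, and justifying that $\psi$ is determined by its restrictions to the factors. The density of the algebraic direct sum in the restricted product handles the second point. The first point follows from the compact open kernel argument, together with the fact that $\langle \cdot, y_q\rangle$ is trivial on $\mathbb{Z}_q^{n_q}$ if and only if $y_q \in \mathbb{Z}_q^{n_q}$.
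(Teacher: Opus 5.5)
Your proof is correct, but it takes a different route from the paper.

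The paper does not build a pairing. It cites the general duality theorem for restricted products (Proposition \ref{properties of restricted product}) to get $\dual{Q} \cong \sideset{}{'}\prod_{q \in \Sigma}(\dual{Q_q}, A(Z_q))$. It then uses local self-duality of $\mathbb{Q}_q$ to see that each $\dual{Q_q}$ is a $\mathbb{Q}_q$-vector space isomorphic to $Q_q$. Finally, it applies Theorem \ref{invariance of choice of compact open subgroup for p-adic vector spaces} a second time, to absorb the fact that the lattices $A(Z_q)$ need not correspond to $\mathbb{Z}_q^{n_q}$ under those local isomorphisms.

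Your steps 1--4 amount to reproving that restricted-product duality theorem in this special case, in the style of Tate's thesis. Because the standard character $\chi_q$ has kernel exactly $\mathbb{Z}_q$, the lattice $\mathbb{Z}_q^{n_q}$ corresponds to its own annihilator under $y \mapsto \langle \cdot, y\rangle$. This is the fact you use in step 3. So the single normalization you perform at the start suffices, and no second appeal to the invariance theorem is needed.

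The paper's argument is shorter and does not depend on choosing local characters, but it rests on an external result. Yours is self-contained, modulo local self-duality of $\mathbb{Q}_q$ and the Open Mapping Theorem. It also produces an explicit symmetric biadditive pairing, which makes annihilators in $Q$ and $\dual{Q}$ concretely computable.
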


\begin{proof}
	Since additive functions between $\mathbb{Z}\inv{S}$-modules are linear, it suffices to prove that $Q \cong \dual{Q}$ as topological abelian groups. By Theorem \ref{properties of restricted product}, \[
		\dual{Q} \cong \sideset{}{'}\prod_{q \in \Sigma} (\dual{Q_q}, A(Z_q)).
	\]
	Since each $A(Z_q)$ is a compact open subgroup of $\dual{Q_q}$ and each $\dual{Q_q}$ is a topological $\mathbb{Q}_q$-vector space isomorphic to $Q_q \cong \mathbb{Q}_q^{n_q}$ (this isomorphism exists because the $p$-adic numbers are self-dual), it follows from Theorem \ref{invariance of choice of compact open subgroup for p-adic vector spaces} that $Q \cong \dual{Q}$.
\end{proof}

\begin{theorem}
	\label{modules over finite adele ring}
	Let $Q \in LC_{\mathbb{A}_{\mathbb{Z}\inv{S}}, f}M$. Then $Q \cong \sideset{}{'}\prod_{q \in \Sigma}Q_q$, where $Q_q$ is a topological $\mathbb{Q}_q$-vector space for each $q \in \Sigma$. \\
\end{theorem}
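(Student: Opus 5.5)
The plan is to decompose $Q$ using the idempotents of $\mathbb{A}_{\mathbb{Z}\inv{S}, f}$. For each $q \in \Sigma$, let $e_q \in \mathbb{A}_{\mathbb{Z}\inv{S}, f}$ be the element with $1$ in the $q$-th coordinate and $0$ elsewhere; it lies in $\mathbb{Z}_\Sigma$. Set $Q_q := e_q Q$. Since $e_q$ is idempotent and multiplication by it is continuous, $Q_q = \ker(1 - e_q)$ is a closed submodule. It is a module over $e_q\mathbb{A}_{\mathbb{Z}\inv{S}, f} \cong \mathbb{Q}_q$, with the quotient topology from Lemma \ref{p-adic numbers are quotient of finite Adele ring}, and the action is continuous. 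Hence $Q_q$ is a locally compact topological $\mathbb{Q}_q$-vector space, and so is isomorphic to some $\mathbb{Q}_q^{n_q}$ by Proposition 4-13 of \cite{RamakrishnanValenza1999}.

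Next, fix a compact open subgroup $C$ of $Q$. As in the proof of Proposition \ref{guaranteed sigma compactness}, $C$ is a $\mathbb{Z}_\Sigma$-submodule because the $\hat{\mathbb{Z}}$-structure is unique. Put $Z_q := e_q C = C \cap Q_q$, which is a compact open subgroup of $Q_q$. Since $C$ is a compact $\mathbb{Z}_\Sigma = \prod_q \mathbb{Z}_q$-module, it decomposes as $C \cong \prod_{q\in\Sigma} e_q C$. To prove this, consider the map $C \to \prod_q e_qC$, $c \mapsto (e_q c)_q$. It is continuous and injective. For injectivity, note that if every $e_q c = 0$, then $e_F c = 0$ for each finite $F$, where $e_F = \sum_{q\in F} e_q$. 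Since $e_F \to 1$ in $\mathbb{Z}_\Sigma$, continuity gives $c = 0$. For surjectivity, given $(c_q)$, the finite partial sums $\sum_{q\in F} c_q$ form a net in the compact set $C$. Any cluster point $c$ satisfies $e_q c = c_q$, because $e_q e_{q'} = 0$ for $q \neq q'$. A continuous bijection between compact Hausdorff spaces is a homeomorphism, so the map is an isomorphism.

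Now define $\Phi\colon \sideset{}{'}\prod_{q\in\Sigma}(Q_q, Z_q) \to Q$ as follows. By Proposition \ref{canonical localization relation}, an element of the restricted product can be written as $\frac{1}{s}(c_q)_q$ with $(c_q)_q \in \prod_q Z_q$. Using the identification above, $(c_q)_q$ corresponds to a unique $c \in C$, and we set $\Phi$ of the element to be $\frac{1}{s}c$. Well-definedness follows from uniqueness of $S$-division in $Q$. The map $\Phi$ is additive. On the open subgroup $\prod_q Z_q$ it restricts to the homeomorphism onto $C$, so $\Phi$ is continuous and open.

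Injectivity: if $\frac1s c = 0$, then $c = 0$. Surjectivity: given $x \in Q$, the proof of Proposition \ref{guaranteed sigma compactness} yields $s$ with $sx \in C$, so $x = \Phi\bigl(\tfrac1s (e_q s x)_q\bigr)$. One must check that $\tfrac1s(e_q sx)_q$ has $q$-components in $Q_q$ and almost all in $Z_q$. This holds because $s$ is a unit in $\mathbb{Z}_q$ for all $q$ not dividing $s$, so $e_q x \in Z_q$ for those $q$.

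The main obstacle is the product decomposition of $C$, specifically the convergence $e_F c \to c$ used for injectivity. This is where continuity of the $\mathbb{Z}_\Sigma$-action on $C$ (equivalently $e_F \to 1$ in $\mathbb{Z}_\Sigma$) is essential. The remaining steps are bookkeeping using the idempotents. Finally, Theorem \ref{invariance of choice of compact open subgroup for p-adic vector spaces} shows that the result does not depend on the choice of $Z_q$.
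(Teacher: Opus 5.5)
Your proposal is correct and follows essentially the paper's route: the idempotents $e_q$, the identification $C \cong \prod_{q\in\Sigma} Z_q$, and extension by $S$-division. The one real difference is that you build the inverse map $\sideset{}{'}\prod_{q\in\Sigma}(Q_q,Z_q)\to Q$ and get bicontinuity directly, because it restricts to a homeomorphism between the open subgroups $\prod_q Z_q$ and $C$; the paper instead uses $\sigma$-compactness of $Q$ and the Open Mapping Theorem, so your version is slightly more elementary.

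Two smaller remarks. Your $e_F \to 1$ argument makes rigorous the paper's informal step $c = (\sum_q e_q)c = \sum_q e_q c$. Also, the decomposition $x = \frac{1}{s}(c_q)_q$ in $\sideset{}{'}\prod_{q}(Q_q,Z_q)$ is not literally Proposition \ref{canonical localization relation}, which concerns $\mathbb{A}_{\mathbb{Z}\inv{S}, f}$ itself. It does follow from $Q_q = \bigcup_k q^{-k}Z_q$, or from Proposition \ref{guaranteed sigma compactness} applied to the restricted product.
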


\begin{proof}
	Let $C$ be a compact open subgroup of $Q$ (in particular it will be a $\mathbb{Z}_{\Sigma}$-submodule). By Proposition \ref{guaranteed sigma compactness}, \ we have $Q = \bigcup_{s \in S} \frac{1}{s}C$. For each $p \in \Sigma$, let $Q_p = Ann_Q(\prod_{q \in \Sigma\setminus p}^{'} \mathbb{Q}_q)$. Then $Q_p$ is a locally compact topological $\mathbb{Q}_p$-vector space by Corollary \ref{p-adic numbers are quotient of finite Adele ring} and Lemma \ref{modules over quotients}, therefore isomorphic to $\mathbb{Q}_p^{n_p}$ for some $n_p \in \mathbb{N}_0$, by Proposition 4-13 of \cite{RamakrishnanValenza1999}. \\
	At the same time, $Z_p := Q_p \cap C$ is a compact open subgroup of $Q_p$ and in particular a topological $\mathbb{Z}_p$-submodule of $Q_p$.\\
	For $p \in \Sigma$, let $e_p := (\delta_{q, p})_{q \in \Sigma}$ denote the unit vector of $\mathbb{Z}_{\Sigma}$ associated to $p$. Then given $m \in Q$, $e_pm \in Q_p$ and if $m \in C$, $e_pm \in Z_p$. \\
	We can first consider the map \begin{align*}
		f\colon C & \to \prod_{q \in \Sigma} Z_q   \\
		c         & \mapsto (e_qc)_{q \in \Sigma}.
	\end{align*}
	$f$ is a continuous group homomorphism and also injective: If $e_qc = 0$ for all $q \in \Sigma$, then $c = 1_{\mathbb{Z}_{\Sigma}}c = (\sum_{p \in \Sigma} e_q) c = \sum_{q \in \Sigma} e_qc = 0$. \\
	As for surjectivity, note that the image of $f$ is closed in $\prod_{q \in \Sigma} Z_q$ by compactness of $C$, therefore if it contains $\bigoplus_{q \in \Sigma} Z_q$, it has to be equal to the whole space. Given $(v_q)_{q \in \Sigma} \in \bigoplus_{q \in \Sigma} Z_q$, with $v_{q_1}, \dots, v_{q_k} \neq 0$, just consider $c = \sum_{i=0}^{k} v_{q_i}$, and we have that for $p \in \Sigma$, $e_pv_{q_i}$ is $0$ if $p \neq q_i$ (by definition) and $v_{q_i}$ otherwise (since in this case, $e_p$ acts like the identity on $Z_p$, due to its unique $\mathbb{Z}_p$-module structure). Therefore, $f(c) = (v_q)_{q \in \Sigma}$ and $f$ is surjective. Because $C$ is compact, $f$ is an isomorphism of topological groups. \\
	Since $Q = \bigcup_{s \in S} \frac{1}{s} C$, we can lift $f$ to the $\mathbb{Z}\inv{S}$-homomorphism \begin{align*}
		\overline{f}\colon Q & \to \sideset{}{'}\prod_{q \in \Sigma} (Q_q, Z_q) \\
		m = \frac{1}{s}c     & \mapsto \frac{1}{s}f(c).
	\end{align*}

	This is well defined because because for $s \in S$ and $p \in \Sigma$ not dividing $s$, $\frac{1}{s}e_p c \in Z_p$ for all $c \in C$ (this matters since only a finite number of primes divide $s$, guaranteeing that $\frac{1}{s}f(c)$ is in the restricted product), and because the family $(\frac{1}{s}C)_{s \in S}$ is directed with respect to inclusion: If $s, t \in S$, then $\frac{1}{s}C, \frac{1}{t}C \subset \frac{1}{st}C$.  \\

	$\overline{f}$ is an extension of a continuous group homomorphism, therefore continuous,  and is injective because of how it's defined. Surjectivity of $\overline{f}$ follows from surjectivity of $f$ and from the fact that \[
		\sideset{}{'} \prod_{q \in \Sigma} Q_q = \bigcup_{s \in S} \frac{1}{s}(\prod_{q \in \Sigma} Z_q).
	\]
	($\prod_{q \in \Sigma} Z_q$ is a compact open subgroup of $\sideset{}{'}\prod_{q \in \Sigma}(Q_q, Z_q)$ by Proposition \ref{properties of restricted product} and we can apply Proposition \ref{guaranteed sigma compactness}). \\
	So $\overline{f}$ is a continuous bijective $\mathbb{Z}\inv{S}$-homomorphism and since we know that $Q$ is $\sigma$-compact, it follows by the open mapping Theorem that $\overline{f}$ is an isomorphism.
\end{proof}

\begin{corollary}
	\label{modules over Adele ring}
	Let $L \in LC_{\mathbb{A}_{\mathbb{Z}\inv{S}}}M$. Then $L \cong \mathbb{R}^n \times \sideset{}{'}\prod_{q \in \Sigma}\mathbb{Q}_q^{n_q}$ as a topological $\mathbb{Z}\inv{S}$-module, for $n \ge 0$ and $n_q \ge 0$ for all $q \in \Sigma$. In particular, $L$ is self-dual by Corollary \ref{self dual modules over finite adele ring}, compactly generated (over $\mathbb{Z}\inv{S}$) and has no small submodules.
\end{corollary}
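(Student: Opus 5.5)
The plan is to split $L$ along the product decomposition of the ring, $\mathbb{A}_{\mathbb{Z}\inv{S}} = \mathbb{R} \times \mathbb{A}_{\mathbb{Z}\inv{S}, f}$. Put $e_\infty = (1,0)$ and $e_f = (0,1)$; these are orthogonal central idempotents with $e_\infty + e_f = 1$. Multiplication by either of them is a continuous $\mathbb{A}_{\mathbb{Z}\inv{S}}$-linear endomorphism of $L$, so $L_\infty := e_\infty L$ and $L_f := e_f L$ are closed submodules. They are closed because each is the kernel of multiplication by the complementary idempotent. Each is locally compact, and $L_\infty \cap L_f = 0$ while $L_\infty + L_f = L$.

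The map $L \to L_\infty \times L_f$, $m \mapsto (e_\infty m, e_f m)$, is continuous and $\mathbb{Z}\inv{S}$-linear. Its inverse $(a,b) \mapsto a+b$ is also continuous. So the splitting is a topological isomorphism directly, and no $\sigma$-compactness is needed; this is the argument of Lemma \ref{continuous section}, with section $m \mapsto e_\infty m$.

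Next identify the two factors. The factor $L_\infty$ is annihilated by $\{0\} \times \mathbb{A}_{\mathbb{Z}\inv{S}, f}$. By Lemma \ref{modules over quotients} it is therefore a locally compact topological module over the quotient $\mathbb{R}$, i.e. a locally compact real vector space, hence $\cong \mathbb{R}^n$. The factor $L_f$ is likewise a locally compact module over $\mathbb{A}_{\mathbb{Z}\inv{S}, f}$. By Theorem \ref{modules over finite adele ring} and Theorem \ref{invariance of choice of compact open subgroup for p-adic vector spaces}, $L_f \cong \sideset{}{'}\prod_{q \in \Sigma}\mathbb{Q}_q^{n_q}$ as a topological $\mathbb{Z}\inv{S}$-module. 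Combining the two gives the stated isomorphism.

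For the add-ons I argue factor by factor.
\begin{itemize}
\item Self-duality: duality commutes with finite products, $\dual{\mathbb{R}^n}\cong\mathbb{R}^n$, and Corollary \ref{self dual modules over finite adele ring} handles the finite-adelic factor. Additive maps are automatically $\mathbb{Z}\inv{S}$-linear, so the isomorphism holds as modules.
\item Compact generation: $\mathbb{R}^n = \mathbb{Z}\inv{S}\,[-1,1]^n$ because $\mathbb{Z} \subset \mathbb{Z}\inv{S}$. The finite part is generated by $\prod_q \mathbb{Z}_q^{n_q}$ by Proposition \ref{guaranteed sigma compactness}. The product of these two compact generating sets generates $L$.
\item No small submodules: this follows from Proposition \ref{fin generated implies no small submodules on dual}-type reasoning, but more directly as follows. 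Take the neighbourhood $B \times \prod_q \mathbb{Z}_q^{n_q}$ with $B$ the open unit ball. Suppose a $\mathbb{Z}\inv{S}$-submodule lies inside it, and take an element $(x,y)$ of that submodule. Since $\mathbb{Z}\inv{S}$ contains $\mathbb{Z}$, boundedness of all integer multiples forces $x=0$. Then $y$ lies in a $\mathbb{Z}\inv{S}$-submodule of $\prod_q\mathbb{Z}_q^{n_q}$, which is trivial by the corollary of Theorem \ref{invariance of choice of compact open subgroup for p-adic vector spaces}.
\end{itemize}

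The main obstacle I expect is the identification of $L_\infty$ with $\mathbb{R}^n$. The paper has only quoted the $p$-adic version of the fact that a locally compact Hausdorff topological vector space is finite dimensional, so I would cite the classical real analogue (Riesz's theorem) in the same way. A secondary check is that the module structure induced on $L_\infty$ via Lemma \ref{modules over quotients} is continuous, which holds because $\mathbb{R}$ is a topological quotient of $\mathbb{A}_{\mathbb{Z}\inv{S}}$ via the projection.
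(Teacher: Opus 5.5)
Your proof is correct, and it uses the same decomposition as the paper. Your $e_\infty L$ and $e_f L$ are exactly the paper's $L' = Ann_L(0\times \mathbb{A}_{\mathbb{Z}\inv{S}, f})$ and $L'' = Ann_L(\mathbb{R}\times 0)$. Both arguments then identify the two factors the same way: via Lemma~\ref{modules over quotients}, the finite-dimensionality of locally compact real vector spaces, and Theorems~\ref{modules over finite adele ring} and~\ref{invariance of choice of compact open subgroup for p-adic vector spaces}.

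The one genuine difference is how you show the splitting is topological.
\begin{itemize}
\item The paper notes that $L'$ and $L''$ are $\sigma$-compact and applies the Open Mapping Theorem through Corollary~\ref{sigma compact internal direct sum}.
\item You observe that $m\mapsto(e_\infty m, e_f m)$ and $(a,b)\mapsto a+b$ are mutually inverse continuous maps.
\end{itemize}
Your route is more elementary: it needs no $\sigma$-compactness, and it gives an $\mathbb{A}_{\mathbb{Z}\inv{S}}$-linear topological isomorphism directly.

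You also spell out two things the paper only asserts: the use of Riesz's theorem for $L_\infty\cong\mathbb{R}^n$, and the compact generation and no-small-submodules checks for the $\mathbb{R}^n$ factor. Both are done correctly.

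One small wording point: in the language of Lemma~\ref{continuous section}, the surjection is $m\mapsto e_\infty m$ onto $L_\infty$, and the section is the inclusion $L_\infty\hookrightarrow L$. This does not affect anything, since your direct argument does not need that lemma.
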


\begin{proof}
	We can consider $L' = Ann_L(0\times \mathbb{A}_{\mathbb{Z}\inv{S}, f})$ and $L'' = Ann_L(\mathbb{R} \times 0)$. Both of these are closed $\mathbb{Z}\inv{S}$-submodules of $M$, and admit topological $\mathbb{R}$-vector space and $\mathbb{A}_{\mathbb{Z}\inv{S}, f}$-module structures respectively, which makes them $\sigma$-compact in particular. \\
	Clearly, $L = L' + L''$ and $L' \cap L'' = 0$, so by Corollary \ref{sigma compact internal direct sum}, $L \cong L' \times L''$ and we can apply the two above theorems to get the isomorphism. \\
	L is compactly generated and has no small submodules because it is a product of two $\mathbb{Z}\inv{S}$ topological modules with both of those properties.
\end{proof}

\noindent  Because of the above Corollary and Theorem, $\mathbb{A}_{\mathbb{Z}\inv{S}, f}$ and $\mathbb{A}_{\mathbb{Z}\inv{S}}$-modules will be referred to without using any restricted product notation from now on, and they can be treated in the same way that finite dimensional real vector spaces are treated (since each such module is uniquely characterized by a sequence of dimensions).

\subsection{Structure Theorems}
\label{modules over localizations of integers}
It is first important to note that for any localization of $\mathbb{Z}$, we can find a set of primes uniquely associated to it that allows us to do a similar analysis to that of the previous subsection.\\
Let $S \subset \mathbb{Z}$ be multiplicatively closed. Then there exists $Z \subset \mathbb{Z}$ also multiplicatively closed such that $Z$ is multiplicatively generated by a set of prime numbers and $\mathbb{Z}\inv{S} = \mathbb{Z}\inv{Z}$: \\
Just take $Z$ to be the submonoid of $\mathbb{N}$ generated by all prime factors of elements of $S$. Since $S \subset Z$, clearly $\mathbb{Z}\inv{S} \subset \mathbb{Z}\inv{Z}$. \\
Conversely, if $\frac{1}{z} \in \mathbb{Z}\inv{Z}$, with $z = p_1^{\alpha_1} \dots p_n^{\alpha_n}$ and each $\alpha_i \ge 1$, for each $i \in \{1, \dots, n\}$ take $s_i \in S$ such that $s_i = p_i a_i$ for some $a_i \in \mathbb{Z}$. Then \[
	\frac{1}{z} = \frac{1}{p_1^{\alpha_1}\dots p^{\alpha_n}} = \frac{a_1^{\alpha_1} \dots a_n^{\alpha^n}}{s_1^{\alpha_1} \dots s_n^{\alpha_n}} \in \mathbb{Z}\inv{S}.
\]
As $1\inv{Z} \subset \mathbb{Z}\inv{S}$, it follows that $\mathbb{Z}\inv{Z} \subset \mathbb{Z}\inv{S}$. \\
This means that we can always suppose without loss of generality that $S$ is generated by a unique  set of prime numbers $\Sigma$, and we will do so. \\

We can now start the process of proving the Structure Theorems in $LC_{\mathbb{Z}\inv{S}}
	M$. Unlike in the classical case for $LCA$ groups, where the Theorems are proven in order, we will start by proving the 2nd Structure Theorem, which means characterizing the compactly generated modules.
\\
The lemma below is very simple, but it is one of the main factors that facilitates their study:

\begin{lemma}
	\label{real vector space is submodule}
	Let $M \in LC_{\mathbb{Z}\inv{S}}M$ with $(M, +) = \mathbb{R}^n \times H$ as given by the 1st Structure Theorem for $LCA$ groups. Then $\mathbb{R}^n \times 0$ is a $\mathbb{Z}\inv{S}$-submodule of $M$. In particular, $H$ is also a $\mathbb{Z}\inv{S}$-submodule
\end{lemma}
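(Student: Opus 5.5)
The plan is purely algebraic. It uses only two facts: $M$, being a $\mathbb{Z}\inv{S}$-module, is $S$-torsion-free, and the decomposition $(M,+) = \mathbb{R}^n \times H$ is an internal direct sum of subgroups. Continuity of the module action is not needed; it is only what makes the decomposition available in the first place. Write $V = \mathbb{R}^n \times 0$. Since $V$ and $H$ are already subgroups, and any subgroup is closed under multiplication by integers, it suffices to show that $\frac{1}{s}V \subset V$ and $\frac{1}{s}H \subset H$ for every $s \in S$.

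For $V$, fix $s \in S$ and $x \in V$, and let $y = \frac{1}{s}x \in M$. Decompose $y = a + h$ with $a \in V$ and $h \in H$. Multiplying by $s$ gives
\[
	x = sy = sa + sh .
\]
Here $sa \in V$ and $sh \in H$. Since $x \in V$ and the sum $V + H$ is direct, comparing $H$-components gives $sh = 0$. By $S$-torsion-freeness of $M$ (multiplication by $s$ is injective, with inverse $\frac{1}{s}$), we get $h = 0$. Hence $y = a \in V$, so $V$ is a $\mathbb{Z}\inv{S}$-submodule.

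For $H$ the argument is symmetric. Given $h \in H$, write $\frac{1}{s}h = v + h'$ with $v \in V$ and $h' \in H$. Then $h = sv + sh'$. Comparing $V$-components forces $sv = 0$, hence $v = 0$, so $\frac{1}{s}h = h' \in H$. Alternatively, the $V$ case can be invoked directly, because $\mathbb{R}^n$ is itself torsion-free.

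I do not expect a genuine obstacle. The only point needing care is that $H$ is not canonical (the complement of $\mathbb{R}^n$ in the 1st Structure Theorem is not unique). The argument therefore has to work for whatever complement $H$ is given, and the component comparison above does exactly that, since it uses nothing about $H$ beyond directness of the sum. As a final remark, both $V$ and $H$ are closed subgroups, so they are closed $\mathbb{Z}\inv{S}$-submodules with the continuous induced action. The isomorphism $(M,+) \cong V \times H$ is additive, hence automatically $\mathbb{Z}\inv{S}$-linear, so it is an isomorphism of topological $\mathbb{Z}\inv{S}$-modules.
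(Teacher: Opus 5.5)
Your proof is correct. It rests on the same key fact as the paper's, namely that division by $s$ in $M$ is unique ($M$ is $S$-torsion-free), but it packages that fact differently.

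For $\mathbb{R}^n \times 0$, the paper uses divisibility of $\mathbb{R}^n$ to write down the candidate $(\frac{1}{s}x, 0)$. It notes that $s(\frac{1}{s}x,0) = (x,0)$ and concludes $\frac{1}{s}(x,0) = (\frac{1}{s}x,0)$ by uniqueness. For $H$, it passes through the quotient $M/(\mathbb{R}^n\times 0) \cong H$ to see that $H$ is $S$-divisible. It then writes $(0,h) = s(0,g)$ with $g \in H$ and computes $\frac{1}{s}(0,h) = (0,g)$.

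You instead decompose the actual element $\frac{1}{s}x$ and kill the unwanted component by comparing components in the direct sum. This treats both summands symmetrically and needs no divisibility of either summand. In effect you prove that every additive direct summand of a $\mathbb{Z}\inv{S}$-module is a submodule, i.e.\ that the additive projections are $\mathbb{Z}\inv{S}$-linear. The paper's version is more explicit about what $\frac{1}{s}$ does in coordinates. Yours is slightly more economical and avoids the detour through the quotient.

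One small correction to your opening remark: the decomposition $(M,+) = \mathbb{R}^n \times H$ comes from local compactness of $(M,+)$ alone, not from continuity of the action. Continuity of the action is used only in your closing remark, to see that the induced actions on the two closed submodules are continuous.
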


\begin{proof}
	Given $(x, 0) \in \mathbb{R}^n \times 0$ and $s \in S$, we can note that $s(\frac{1}{s}x, 0) = (x, 0) = (s\frac{1}{s})(x, 0)$. The fact that $M$ is a $\mathbb{Z}\inv{S}$-module implies therefore that $\frac{1}{s}(x, 0) = (\frac{1}{s}x, 0)$ which means that $\mathbb{R}^n \times 0$ is a closed submodule of $M$. \\
	This makes $H$ a module over $\mathcal{Z}\inv{S}$-module, by being a quotient of $M$. Therefore in particular, it is $S$-divisible, that is, $sH = H$ for all $s \in S$. So a product of the form $\frac{1}{s}(0, h)$ will be an element of $H$ since $(0, h)$ is of the form $s(0, g)$ for some $g \in H$ and because of how $\mathcal{Z}\inv{S}$-module structures work.
\end{proof}

\noindent  Because of this, we can suppose without loss of generality that $M$ contains a compact open subgroup.

\begin{prop}
	\label{compactly generated splits}
	Let $M \in LC_{Z\inv{S}}M$ as above be compactly generated. Then $M \cong (\mathbb{Z}\inv{S})^k \times C(M)$ for some $k \ge 0$.
\end{prop}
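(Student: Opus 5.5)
We assume, as just arranged via Lemma \ref{real vector space is submodule}, that $M$ has a compact open subgroup $K$. Since $M = \mathbb{Z}\inv{S}L$ for some compact $L$, and $M$ is a topological $\mathbb{Z}\inv{S}$-module, we may enlarge $L$ to contain $K$. The plan has three steps. First, show that $M/C(M)$ is a discrete, finitely generated, torsion-free $\mathbb{Z}\inv{S}$-module, hence free of finite rank. Second, split off this free quotient using Corollary \ref{free quotient implies split}. Third, identify the kernel with $C(M)$.

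For discreteness: $K$ is a compact subgroup, so $K \subset C(M)$. Hence $C(M)$ is open, and the quotient $q\colon M \to M/C(M)$ is a continuous surjective $\mathbb{Z}\inv{S}$-linear map onto a discrete module. Note that $C(M)$ is a closed submodule by the Corollary following Proposition \ref{annihilator of submodule}.

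For finite generation: the image $q(L)$ is compact in a discrete space, so it is finite. Hence $M/C(M) = \mathbb{Z}\inv{S}\, q(L)$ is finitely generated over $\mathbb{Z}\inv{S}$.

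For torsion-freeness: by Corollary \ref{elliptic and t.d. duality}, $M/C(M)$ has no nontrivial compact subgroups. Since it is discrete, every finite subgroup is compact, so it has no nonzero torsion elements and is torsion-free as an abelian group.

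Because $\mathbb{Z}\inv{S}$ is a PID, being a localization of $\mathbb{Z}$, a finitely generated torsion-free module over it is free. Thus $M/C(M) \cong (\mathbb{Z}\inv{S})^k$ with the discrete topology.

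Now apply Corollary \ref{free quotient implies split}, using its final clause for discrete $R$. Here $R = \mathbb{Z}\inv{S}$ carries the discrete topology as a topological ring, since module actions are over discrete $S$. Lifting basis vectors to $x_1, \dots, x_k \in M$ gives a section $s$, which is continuous because its domain is discrete. We conclude $M \cong \ker(q) \times (\mathbb{Z}\inv{S})^k = C(M) \times (\mathbb{Z}\inv{S})^k$ as topological $\mathbb{Z}\inv{S}$-modules.

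The step most likely to need care is the one just used: treating $\mathbb{Z}\inv{S}$ as discrete and confirming the lifted section is $\mathbb{Z}\inv{S}$-linear. Linearity is automatic, since additive maps between $\mathbb{Z}\inv{S}$-modules are linear, as noted in the introduction. A second point to verify is that $C(M)$ is indeed open. This holds because it contains the compact open subgroup $K$, which is exactly why the reduction to $M$ having a compact open subgroup was made. Without that reduction, the $\mathbb{R}^n$ factor would obstruct discreteness of the quotient.
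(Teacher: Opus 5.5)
Your proposal is correct and follows the paper's proof essentially step for step. You get discreteness of $M/C(M)$ from the compact open subgroup, finite generation from compact generation, torsion-freeness via Corollary \ref{elliptic and t.d. duality}, freeness over the PID $\mathbb{Z}\inv{S}$, and the splitting via Corollary \ref{free quotient implies split}, filling in along the way some details the paper leaves implicit: why $C(M)$ is open, why $q(L)$ is finite, and why the section is continuous.
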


\begin{proof}
	Since $M$ contains a compact open subgroup, $M/C(M)$ is discrete and compactly generated, which makes it finitely generated. At the same time, it contains no nontrivial finite subgroups by Corollary \ref{elliptic and t.d. duality}. In particular, it is torsion free over $\mathbb{Z}$, which is equivalent to being torsion free over $\mathbb{Z}\inv{S}$. Therefore, by the Structure Theorem for finitely generated modules over a PID, $M/C(M)$ is free of finite dimension, which means we get a continuous surjective map $M \to (\mathbb{Z}\inv{S})^k$ for some $k \ge 0$, with kernel $C(M)$. By Corollary \ref{free quotient implies split}, $M \cong (\mathbb{Z}\inv{S})^k\times C(M)$.
\end{proof}

The above lemma and proposition, combined, tell us that it suffices to classify elliptic compactly generated modules over $\mathbb{Z}\inv{S}$ in order to arrive at the 2nd Structure Theorem.
However, it is useful to first look at the totally disconnected ones.

\begin{lemma}
	\label{compactly generated elliptic and totally disconnected module}
	Let $N \in LC_{\mathbb{Z}\inv{S}}M$ be compactly generated, elliptic and totally disconnected. Then $N \cong Q \times C$, where $Q$ is a topological module over $\mathbb{A}_{\mathbb{Z}\inv{S}, f}$, and $C$ is a compact $\mathbb{Z}_{\mathbb{P}\setminus \Sigma}$-topological module.
\end{lemma}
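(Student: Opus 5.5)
Since $N$ is elliptic and totally disconnected, Theorem \ref{characterization of modules over profinite integers} makes $N$ a topological $\hat{\mathbb{Z}}$-module, and by the remark in the introduction it is automatically a $(\mathbb{Z}\inv{S}, \hat{\mathbb{Z}})$-bimodule. The plan is to split $\hat{\mathbb{Z}} \cong \mathbb{Z}_\Sigma \times \mathbb{Z}_{\mathbb{P}\setminus\Sigma}$ using the two complementary idempotents $e_\Sigma$ and $e' = 1 - e_\Sigma$. The candidates are $Q := e_\Sigma N$ and $C := e'N$. Both are closed submodules: each is the kernel of multiplication by the other idempotent, which is a continuous $\mathbb{Z}\inv{S}$-linear map because the actions commute. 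We have $N = Q + C$ and $Q \cap C = 0$. The module $C$ is a topological $\mathbb{Z}_{\mathbb{P}\setminus\Sigma}$-module via the quotient $\hat{\mathbb{Z}} \to \mathbb{Z}_{\mathbb{P}\setminus\Sigma}$, and $Q$ is a topological $\mathbb{Z}_\Sigma$-module; this is Lemma \ref{modules over quotients}. Since $N$ is compactly generated over the $\sigma$-compact ring $\mathbb{Z}\inv{S}$, it is $\sigma$-compact by Proposition \ref{compactly generated over sigma compact ring}, and so are its closed submodules $Q$ and $C$. Corollary \ref{sigma compact internal direct sum} then gives $N \cong Q \times C$.

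Next I would show that $C$ is compact. Take a compact generating set $K$ with $N = \mathbb{Z}\inv{S}K$. By Corollary \ref{compact subset contained in compact open subgroup}, $K$ lies in a compact open subgroup $P$, which is a $\hat{\mathbb{Z}}$-submodule. Each $s \in S$ is a product of primes in $\Sigma$, so it is a unit in $\mathbb{Z}_{\mathbb{P}\setminus\Sigma}$. Hence on $C$ we have $\frac{1}{s} e' x = u_s e' x$, where $u_s \in \hat{\mathbb{Z}}$ is a lift of $s^{-1} \in \mathbb{Z}_{\mathbb{P}\setminus\Sigma}$; this identification uses uniqueness of inverses in a $\mathbb{Z}\inv{S}$-module. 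Consequently $C = e'N = \sum_s \frac{1}{s} e' P \subset \hat{\mathbb{Z}} e' P \subset P$, so $C$ is a closed subset of a compact set and is therefore compact.

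Finally I would put an $\mathbb{A}_{\mathbb{Z}\inv{S}, f}$-module structure on $Q$. Set $Z := e_\Sigma P$, a compact open $\mathbb{Z}_\Sigma$-submodule of $Q$; it is open because $Q \cap P \subset e_\Sigma P$. By Proposition \ref{canonical localization relation}, every $a \in \mathbb{A}_{\mathbb{Z}\inv{S}, f}$ can be written as $a = \frac{1}{s} y$ with $y \in \mathbb{Z}_\Sigma$. Define $a \cdot x := \frac{1}{s}(yx)$. Well-definedness follows from the bimodule structure: if $\frac{1}{s}y = \frac{1}{t}y'$, then $ty = sy'$ in the torsion-free ring $\mathbb{Z}_\Sigma$, so $t(yx) = s(y'x)$, and dividing in the $S$-torsion-free module $Q$ gives the same value. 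The module axioms follow routinely.

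The main obstacle is continuity of this action. Local continuity on $\frac{1}{s}\mathbb{Z}_\Sigma \times Q$ holds because multiplication by $\frac{1}{s}$ is a homeomorphism of $Q$ and the $\mathbb{Z}_\Sigma$-action is continuous. Since the sets $\frac{1}{s}\mathbb{Z}_\Sigma$ are open in $\mathbb{A}_{\mathbb{Z}\inv{S}, f}$ and cover it, these local pieces glue, exactly as in the proof of Theorem \ref{characterization of modules over profinite integers}. Together with the splitting $N \cong Q \times C$ and the compactness of $C$, this proves the lemma.
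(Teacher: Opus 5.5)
Your proposal is correct and follows essentially the same route as the paper. Your splitting $N = e_\Sigma N \oplus e'N$ is exactly the paper's $N' = Ann_N(0\times\mathbb{Z}_{\mathbb{P}\setminus\Sigma})$, $N'' = Ann_N(\mathbb{Z}_\Sigma\times 0)$; you combine them via $\sigma$-compactness and Corollary \ref{sigma compact internal direct sum}, get compactness of $C$ from each $s\in S$ being a unit in $\mathbb{Z}_{\mathbb{P}\setminus\Sigma}$, and build the $\mathbb{A}_{\mathbb{Z}\inv{S},f}$-action from Proposition \ref{canonical localization relation} glued over the open cover by the sets $\frac{1}{s}\mathbb{Z}_\Sigma$, just as the paper does. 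The only differences are minor: you find the compact open subgroup in $N$ and project it by $e'$ rather than working inside $N''$, and you spell out the well-definedness of the adelic action, which the paper leaves implicit.
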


\begin{proof}
	$N$ is a topological module over $\hat{\mathbb{Z}} \cong \prod_{q \in \mathbb{P}} \mathbb{Z}_q \cong \mathbb{Z}_\Sigma \times \prod_{q \in \mathbb{P} \setminus \Sigma} \mathbb{Z}_q = \mathbb{Z}_{\Sigma} \times \mathbb{Z}_{\mathbb{P}\setminus \Sigma}$, by Theorem \ref{characterization of modules over profinite integers}. With this identification of the profinite integers, we write its elements in the form $(x, y)$ where $x \in \mathbb{Z}_{\Sigma}$ and $y \in \mathbb{Z}_{\mathbb{P}\setminus \Sigma}$. \\
	Let $N' = Ann_N(0 \times \mathbb{Z}_{\mathbb{P}\setminus \Sigma})$ and $N'' = Ann_N(\mathbb{Z}_{\Sigma} \times 0)$. \\
	$N'$ and $N''$ are closed subgroups of $N$, and can also be easily seen to be $\mathbb{Z}\inv{S}$-submodules.
	\\
	Given $n \in N$, we have $n = (1, 1)n = (1, 0)n + (0, 1)n \in N'' + N'$, therefore $N = N' + N''$. It is also clear that $N' \cap N'' = 0$. Since $N$ is compactly generated over $\mathbb{Z}\inv{S}$, it is also $\sigma$-compact, which implies that $N'$ and $N''$ have the same property. As such, $N \cong N' \times N''$ by Corollary \ref{sigma compact internal direct sum}. \\
	$N'$ is a topological module over $\frac{\hat{\mathbb{Z}}}{\mathbb{Z}_{\mathbb{P}\setminus \Sigma}} \cong \mathbb{Z}_\Sigma$ and over $\mathbb{Z}\inv{S}$ at the same time. By Proposition \ref{canonical localization relation}, $N'$ admits an obvious module structure over $\mathbb{A}_{\mathbb{Z}\inv{S}, f}$. The action given by this structure is also continuous, because $\bigcup_{s \in S}\frac{1}{s}\mathbb{Z}_{\Sigma} \times N'$ is an open cover of $\mathbb{A}_{\mathbb{Z}\inv{S}, f} \times N'$, and the restriction of the action to each $\frac{1}{s}\mathbb{Z}_{\Sigma} \times N'$ \ is the composition of the continuous $\mathbb{Z}_{\Sigma}$-module structure  of $N'$ with multiplication by $\frac{1}{s} \in \mathbb{Z}\inv{S}$. Therefore $N'$ is a topological module over $\mathbb{A}_{\mathbb{Z}\inv{S}, f}$ and we can take $Q = N'$. \\

	As for $N''$, note that since it is a quotient of $N$, it is also compactly generated over $\mathbb{Z}\inv{S}$, while being a topological module over $\mathbb{Z}_{\mathbb{P}\setminus \Sigma}$ at the same time. In particular, we can write $N'' = \bigcup_{s \in S} \frac{1}{s}K$, where $K$ is some compact open subgroup of $N''$, by Corollary \ref{compact subset contained in compact open subgroup}. But given $s \in S$, $s$ is not divisible by any prime $q \in \mathbb{P}\setminus \Sigma$, which makes it invertible in each associated $\mathbb{Z}_q$. \\
	As such, given $k \in K$, \[
		\frac{1}{s}k = \frac{1}{s}((1_{\mathbb{Z}_q})_{q \in \mathbb{P}\setminus\Sigma} \ k) = (\frac{1_{\mathbb{Z}_q}}{s})_{q \in \mathbb{P}\setminus \Sigma} k \in \mathbb{Z}_{\mathbb{P}\setminus \Sigma}K.
	\]
	As $K$ is a closed subgroup of $N''$, it is also a $\mathbb{Z}_{\mathbb{P}\setminus \Sigma}$-submodule, therefore $\mathbb{Z}_{\mathbb{P}\setminus \Sigma}K \subset K$ and $\frac{1}{s}K \subset K$ for every $s \in S$. In other words, $N'' = K$ is compact and we can take $C = N''$.
\end{proof}

\begin{corollary}
	\label{elliptic and compactly generated}
	Let $M \in LC_{\mathbb{Z}\inv{S}}M$ be compactly generated and elliptic. Then $M \cong Q \times K$, where $Q$ is a topological $\mathbb{A}_{\mathbb{Z}\inv{S}, f}$-module and $K$ is a compact $\mathbb{Z}\inv{S}$-module  such that $K/K_0$ is a topological $\mathbb{Z}_{\mathbb{P}\setminus \Sigma}$-module and $K_0 \cong M_0$. For convenience purposes, we will simply say that $K$ is a compact $\mathbb{Z}_{\mathbb{P}\setminus \Sigma}$ extension by $M_0$ (since there exists a short exact sequence $0 \longrightarrow M_0 \longrightarrow K \longrightarrow K/K_0 \longrightarrow 0$).
\end{corollary}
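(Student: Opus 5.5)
The plan is to reduce to Lemma \ref{compactly generated elliptic and totally disconnected module} by passing to the totally disconnected quotient $M/M_0$, and then to lift the resulting decomposition back to $M$.

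First, $M_0$ is a closed $\mathbb{Z}\inv{S}$-submodule. Since $M$ is elliptic, $M_0 \subseteq C(M)=M$, and $M_0$ is contained in every compact open subgroup of $M$ (Lemma \ref{conn.component}). So $M_0$ is a compact connected module. The quotient $\overline{M} := M/M_0$ is then a locally compact $\mathbb{Z}\inv{S}$-module with the following properties:
\begin{itemize}
\item it is totally disconnected, by Lemma \ref{conn.component};
\item it is elliptic, as a quotient of an elliptic group (images of compact open subgroups are compact open);
\item it is compactly generated, as the image of a compact generating set.
\end{itemize}
Lemma \ref{compactly generated elliptic and totally disconnected module} therefore gives $\overline{M} \cong Q' \times C'$. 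Here $Q'$ is a topological $\mathbb{A}_{\mathbb{Z}\inv{S},f}$-module and $C'$ is a compact $\mathbb{Z}_{\mathbb{P}\setminus\Sigma}$-module.

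Let $\pi\colon M\to\overline{M}$ be the projection. Set $K:=\pi^{-1}(0\times C')$, a closed submodule of $M$. It is compact, being an extension of the compact $C'$ by the compact $M_0$. Moreover $K_0 = M_0$: indeed $M_0\subseteq K_0\subseteq M_0$, since $K/M_0\cong C'$ is totally disconnected. Hence $K/K_0\cong C'$ is a topological $\mathbb{Z}_{\mathbb{P}\setminus\Sigma}$-module, as required.

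It remains to find a closed submodule $Q\subseteq M$ with $M = Q\oplus K$ topologically and $Q\cong Q'$. I would obtain this by splitting the quotient $M\to M/K\cong Q'$. By Theorem \ref{modules over finite adele ring}, $Q'$ is a restricted product of finite-dimensional $\mathbb{Q}_q$-vector spaces. Such a module is divisible as a group, hence injective as a $\mathbb{Z}\inv{S}$-module. So the natural approach is to run the argument of Lemma \ref{lemma 4.2.14}, or dually to split the inclusion $K\hookrightarrow M$.

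The cleanest version is dual. Since $K$ is compact, $A(K)\cong \dual{Q'}$ is open in $\dual{M}$ by Corollary \ref{subgroup - annihilator topological relations}. By Corollary \ref{self dual modules over finite adele ring}, $\dual{Q'}\cong Q'$ is divisible, so $A(K)$ is an open injective submodule of $\dual{M}$. Corollary \ref{Open injective submodule} then gives $\dual{M}\cong A(K)\times D$ with $D$ discrete. Dualizing back yields $M\cong \dual{A(K)}\times \dual{D}$. Here $\dual{A(K)}\cong M/K\cong Q'$, and $\dual{D}$ is compact and isomorphic to $\dual{\dual{M}/A(K)}\cong K$, by Proposition \ref{annihilator isomorphisms} and Proposition \ref{annihilator of submodule}. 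This gives the desired decomposition $M\cong Q\times K$.

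The main obstacle is the injectivity claim: that the dual of an $\mathbb{A}_{\mathbb{Z}\inv{S},f}$-module is divisible, and hence injective over the PID $\mathbb{Z}\inv{S}$. One must check that divisibility by all integers holds, not merely by elements of $S$. This comes from each factor $\mathbb{Q}_q^{n_q}$ being divisible together with the restricted-product description. A second point needing care is making sure that all the isomorphisms are $\mathbb{Z}\inv{S}$-linear. This is automatic, since additive maps between $\mathbb{Z}\inv{S}$-modules are linear.
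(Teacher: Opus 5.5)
Your proposal is correct and follows essentially the paper's route: pass to $M/M_0$, apply Lemma \ref{compactly generated elliptic and totally disconnected module}, split off the open injective submodule of $\dual{M}$ dual to the $\mathbb{A}_{\mathbb{Z}\inv{S},f}$-part using Corollary \ref{Open injective submodule}, and dualize back. Your $A(K)$, with $K=\pi^{-1}(0\times C')$, is exactly the paper's copy of $\dual{T}$ inside $C(\dual{M})=A(M_0)$; naming $K$ up front only lets you read off $K_0=M_0$ and $K/K_0\cong C'$ directly, replacing the paper's torsion argument that $B\subseteq D$ and $C(D)=B$.
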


\begin{proof}
	Since $M/M_0$ is in the conditions of the above Lemma, $M/M_0 \cong T \times C$, where $T$ is a topological $\mathbb{A}_{\mathbb{Z}\inv{S}, f}$-module and $C$ is a compact topological $\mathbb{Z}_{\mathbb{P}\setminus \Sigma}$-module.  Therefore,
	\[
		C(\dual{M}) = A(M_0) \cong \dual{M/M_0} \cong \dual{T} \times B,
	\]
	where $B = \dual{C}$ is a discrete topological $\mathbb{Z}_{\mathbb{P}\setminus \Sigma}$-module and therefore in particular, torsion. We can suppose that $\dual{T}$ and $B$ are contained in $C(\dual{M})$, with $\dual{T} + B = C(\dual{M})$ and $\dual{T}\cap B = 0$, for convenience.
	Since $\dual{T}$ is in particular a vector space over $\mathbb{Q}$, it is injective as a $\mathbb{Z}\inv{S}$-module. \\
	At the same time, because of $B$ being discrete, $\dual{T}$ is an open injective submodule of $C(\dual{M})$ which is also open in $\dual{M}$ (since $\dual{M}$ contains a compact open subgroup). As such, by Corollary \ref{Open injective submodule}, there exists a discrete submodule $D$ of $\dual{M}$ such that $\dual{M} \cong \dual{T} \times D$ (in particular, $D \cap \dual{T} = 0$ as well). Because $B$ is a torsion abelian group and $\dual{T}$ is torsion-free (reminder that $T \cong \dual{T}$), $B$ has to be contained in $D$ and indeed, we can note that \[
		C(D) = D \cap C(\dual{M}) = D \cap (\dual{T} + B) = D \cap B = B,
	\]
	where the third equality holds because $B \subset D$ and $D \cap \dual{T} = 0$. \\
	Dualizing, we get that \[
		M \cong T \times K,
	\]
	where $K = \dual{D}$ is compact and $K_0 = A_D(C(D)) = A(B)$. \\
	$K/K_0 = \dual{D}/A_D(C(D)) \cong \dual{C(D)} = \dual{B} \cong C$ is a topological $\mathbb{Z}_{\mathbb{P}\setminus \Sigma}$ module. \\
	$K_0 \cong M_0$ because under the above isomorphism, $M_0 \cong 0 \times K_0$.
\end{proof}

\noindent With this, we can arrive at the 2nd Structure Theorem for $LC$ modules over $\mathbb{Z}\inv{S}$.

\begin{theorem}[2nd Structure Theorem]
	\label{2nd Structure Theorem}
	Let $M \in LC_{\mathbb{Z}\inv{S}}M$ be compactly generated. Then putting Lemma \ref{real vector space is submodule} together with Proposition \ref{compactly generated splits} and Corollary \ref{elliptic and compactly generated},\[
		M \cong A \times (\mathbb{Z}\inv{S})^k \times K,
	\]
	for $k \ge 0$, $A$ a topological module over $\mathbb{A}_{\mathbb{Z}\inv{S}}$ and $K$ a compact $\mathbb{Z}_{\mathbb{P}\setminus \Sigma}$ extension by $C(M)_0 = C(M)\cap M_0 \cong K_0$.
\end{theorem}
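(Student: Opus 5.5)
The plan is to assemble the three earlier results in the order the statement suggests, keeping track of what happens to $M_0$ and $C(M)$ at each stage.

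\emph{Step 1: split off the vector part.} By the 1st Structure Theorem for $LCA$ groups, write $(M,+) = \mathbb{R}^n \times H$ with $H$ containing a compact open subgroup. By Lemma \ref{real vector space is submodule}, both factors are $\mathbb{Z}\inv{S}$-submodules. The decomposition is a topological product, so it is an isomorphism of topological $\mathbb{Z}\inv{S}$-modules. Since $H$ is a quotient of $M$, it is compactly generated over $\mathbb{Z}\inv{S}$.

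\emph{Step 2: split off the free part.} Apply Proposition \ref{compactly generated splits} to $H$. This gives $H \cong (\mathbb{Z}\inv{S})^k \times C(H)$. Here $C(H)$ is closed and is a direct factor, hence a quotient of $H$. It is therefore compactly generated and elliptic. Moreover $C(M) = 0 \times C(H)$, because $\mathbb{R}^n$ has no nontrivial compact subgroups, and compact subgroups of a product project to compact subgroups of each factor.

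\emph{Step 3: decompose the elliptic part and regroup.} Apply Corollary \ref{elliptic and compactly generated} to $C(H)$. This gives $C(H) \cong Q \times K$ with $Q$ a topological $\mathbb{A}_{\mathbb{Z}\inv{S},f}$-module and $K$ a compact $\mathbb{Z}_{\mathbb{P}\setminus\Sigma}$ extension by $C(H)_0 \cong K_0$. Set $A := \mathbb{R}^n \times Q$. Then $A$ is a topological $\mathbb{A}_{\mathbb{Z}\inv{S}}$-module: $\mathbb{R}$ acts on the first factor, $\mathbb{A}_{\mathbb{Z}\inv{S},f}$ acts on the second, and the product action of $\mathbb{A}_{\mathbb{Z}\inv{S}} = \mathbb{R}\times\mathbb{A}_{\mathbb{Z}\inv{S},f}$ is continuous. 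Its restriction to $\mathbb{Z}\inv{S}$, embedded diagonally, agrees with the given module structure, since the $\mathbb{Z}\inv{S}$-structure on an abelian group is unique. This yields $M \cong A \times (\mathbb{Z}\inv{S})^k \times K$.

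\emph{Step 4: identify $K_0$ (the main obstacle).} We need $K_0 \cong C(M)_0 = C(M)\cap M_0$. Step 3 gives $K_0 \cong C(H)_0$, and $C(H)_0$ is identified with $C(M)_0$ through Step 2. It remains to check that $C(M)_0 = C(M)\cap M_0$, which I would do by computing both sides in the product decomposition. First, $M_0 = \mathbb{R}^n \times Q_0 \times 0 \times K_0$. Here $Q_0 = 0$ because $Q$ is totally disconnected, being a module over the totally disconnected ring $\mathbb{A}_{\mathbb{Z}\inv{S},f}$; and $(\mathbb{Z}\inv{S})^k$ is discrete. Second, $C(M) = 0\times Q\times 0\times K$, using that the components of a product are products of components. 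Intersecting gives $C(M)\cap M_0 = 0\times0\times0\times K_0$, which equals $C(M)_0$. This is where most of the bookkeeping lies; the rest is direct citation.
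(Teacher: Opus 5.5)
Your proposal is correct and matches the paper's proof: the paper likewise combines Lemma~\ref{real vector space is submodule} and Proposition~\ref{compactly generated splits} to get $M \cong \mathbb{R}^n \times (\mathbb{Z}\inv{S})^k \times C(M)$ and then applies Corollary~\ref{elliptic and compactly generated} to the elliptic factor. Your Step~4 only makes explicit the identification $C(M)_0 = C(M)\cap M_0 \cong K_0$, which the paper leaves unstated, but one justification there needs repair: $Q_0 = 0$ does not follow just from $\mathbb{A}_{\mathbb{Z}\inv{S},f}$ being totally disconnected (the discrete ring $\mathbb{Z}\inv{S}$ acts continuously on the connected group $\mathbb{R}$); it follows from that ring being elliptic, so that $Q$ and $\dual{Q}$ are both elliptic, or directly from $Q \cong \sideset{}{'}\prod_{q\in\Sigma}\mathbb{Q}_q^{n_q}$ (Theorem~\ref{modules over finite adele ring}).
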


\begin{proof}
	By Lemma \ref{real vector space is submodule} and Proposition \ref{compactly generated splits}, $M \cong \mathbb{R}^n \times (\mathbb{Z}\inv{S})^k \times C(M)$. Applying Corollary \ref{elliptic and compactly generated}, we get the intended result.
\end{proof}

The above Structure Theorem is sufficient to prove the 1st one, because locally compact modules contain atleast one open compactly-generated submodule.
\begin{theorem}[1st Structure Theorem]
	\label{1st Structure Theorem}
	Let $M \in LC_{\mathbb{Z}\inv{S}}M$. Then \[
		M \cong A \times N,
	\]
	where $A$ is a topological $\mathbb{A}_{\mathbb{Z}\inv{S}}$-module, and $N$ contains a compact open $\mathbb{Z}_{\mathbb{P}\setminus \Sigma}$ extension by $N_0 \cong M_0 \cap C(M)$ (such extension is always taken to be a submodule).
\end{theorem}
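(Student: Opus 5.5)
We reduce to the compactly generated case handled by Theorem \ref{2nd Structure Theorem}, and then split off the $\mathbb{A}_{\mathbb{Z}\inv{S}}$-part using injectivity.

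\emph{Step 1 (an open compactly generated submodule).} By Lemma \ref{real vector space is submodule} we may write $M \cong \mathbb{R}^n \times H$ with $H$ a $\mathbb{Z}\inv{S}$-submodule containing a compact open subgroup $U$. Put $M' := \mathbb{R}^n \times \mathbb{Z}\inv{S}U$. This is an open submodule of $M$ (it contains the open set $\mathbb{R}^n\times U$), hence closed, hence locally compact. It is compactly generated over $\mathbb{Z}\inv{S}$, so it is $\sigma$-compact.

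\emph{Step 2 (structure of $M'$).} Applying Theorem \ref{2nd Structure Theorem} to $M'$ gives
\[
M' \cong A \times (\mathbb{Z}\inv{S})^k \times K,
\]
where $A$ is a topological $\mathbb{A}_{\mathbb{Z}\inv{S}}$-module and $K$ is a compact $\mathbb{Z}_{\mathbb{P}\setminus\Sigma}$ extension by $C(M')_0 \cong K_0$. Since $M'$ is open in $M$, we have $M'_0 = M_0$ and $C(M') = C(M)\cap M'$. Because $M_0 \subset M'$, it follows that $C(M')_0 = C(M)\cap M_0$, up to the usual identifications.

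\emph{Step 3 (splitting off $A$).} By Corollary \ref{modules over Adele ring}, $A \cong \mathbb{R}^n\times\prod'_{q\in\Sigma}\mathbb{Q}_q^{n_q}$. The element $s \in S$ acts invertibly, and the $\mathbb{Z}$-torsion is trivial, so $A$ is a $\mathbb{Q}$-vector space; as an abelian group it is divisible. Hence $A$ is algebraically injective as a $\mathbb{Z}\inv{S}$-module. It is $\sigma$-compact, and it is closed in $M$ because it is closed in the open (hence closed) submodule $M'$. Let $L := (\mathbb{Z}\inv{S})^k\times K$, which is also closed and $\sigma$-compact. Then $L + A = M'$ is open in $M$ and $L\cap A = 0$. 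Lemma \ref{lemma 4.2.14} therefore yields a closed submodule $N \supset L$ with $L$ open in $N$ and $M \cong N \times A$ as topological $\mathbb{Z}\inv{S}$-modules.

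\emph{Step 4 (properties of $N$).} Since $K$ is open in $L$ and $L$ is open in $N$, $K$ is a compact open submodule of $N$, and it is a $\mathbb{Z}_{\mathbb{P}\setminus\Sigma}$ extension by $K_0$. It remains to identify $N_0$. Under the splitting, $N_0 \subset K$, since $K$ is open and the connected component lies in every open subgroup. Hence $N_0 = K_0 \cong C(M)\cap M_0$. Here we use that $A_0 = \mathbb{R}^n$ has no compact part, so $M_0 \cap C(M)$ lands in the $N$-factor.

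\emph{Expected obstacle.} The main difficulty is Step 3. Lemma \ref{lemma 4.2.14} as stated requires $M$ to be locally compact with $L,Q$ $\sigma$-compact, but $M$ itself need not be $\sigma$-compact. Inspecting its proof, only the $\sigma$-compactness of $L+Q$ is used: it is needed for $L\times Q\cong L+Q$ via Corollary \ref{sigma compact internal direct sum}, after which the extension of $\psi$ to $M$ is continuous simply because $L+Q$ is open. The subsequent application of Lemma \ref{continuous section} needs no countability. I would check this carefully. I would also verify that the identification $N_0\cong M_0\cap C(M)$ survives the isomorphism $M\cong N\times A$, by computing connected components factorwise: $M_0 = \mathbb{R}^n \times N_0$ and $C(M) = C(A)\times C(N)$ with $C(A)\cap\mathbb{R}^n = 0$.
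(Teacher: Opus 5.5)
Your proof is correct and follows the paper's argument essentially step for step. You take an open compactly generated submodule $M'$, apply the 2nd Structure Theorem to it, use Lemma \ref{lemma 4.2.14} with the adelic factor as the injective $Q$, and get $N_0=K_0$ because $K$ is compact open in $N$; your factorwise check of $M_0\cap C(M)$ is, if anything, more explicit than the paper's. One slip to fix: invertibility of the elements of $S$ together with torsion-freeness does not make $A$ a $\mathbb{Q}$-vector space (e.g.\ $\mathbb{Z}\inv{S}$ itself), so justify divisibility instead by noting that every nonzero integer is a unit of $\mathbb{A}_{\mathbb{Z}\inv{S}}$, or read it off directly from $A\cong\mathbb{R}^n\times\sideset{}{'}\prod_{q\in\Sigma}\mathbb{Q}_q^{n_q}$.
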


\begin{proof}
	Let $V$ be a compact neighbourhood of the identity in $M$ and consider $M'$ to be the submodule generated by $V$. $M'$ is an open compactly generated submodule of $M$ and by the 2st Structure Theorem, there exists an isomorphism $\phi\colon A \times (\mathbb{Z}\inv{S})^k \times K \to M'$ as above, where $K$ is a $\mathbb{Z}_{\mathbb{P}\setminus \Sigma}$ extension by $(M')_0 \cap C(M')$, the latter identified with $0\times 0 \times K_0$ through $\phi$. \\
	Let $Q = \phi(A \times 0 \times 0)$ and $L = \phi(0 \times (\mathbb{Z}\inv{S})^k \times K)$. Then $L + Q = M'$, which is open in $M$, $L$ and $Q$ are both $\sigma$-compact with $L \cap Q = 0$ and $Q$ is an injective $\mathbb{Z}\inv{S}$-module. As such, by Lemma \ref{lemma 4.2.14}, there exists a closed submodule $N$ of $M$ such that $L$ is open in $N$, and $M \cong Q \times N \cong A \times N$. \\
	$K' = \phi(0 \times 0 \times K)$ is open in $L$ therefore also open in $N$. At the same time, it is a compact $\mathbb{Z}_{\mathbb{P}\setminus \Sigma}$-extension by $\phi(0 \times 0 \times K_0) = (K')_0 = N_0$ (Corollary \ref{Connected component of open subgroup}). Under the above isomorphism on $M$, $M_0 \cap C(M)$ is identified with $N_0$, proving the result.

\end{proof}

Before getting to the third Theorem for modules, it is important to look at the 3rd Structure Theorem of $LCA$ groups in a different perspective: It says that $G \in LCA$ is of Lie type if and only if it is isomorphic to $\mathbb{R}^n \times (\unitcircle)^k \times D$, where $n, k \ge 0$ and $D$ is discrete. \\
But we can note that if $H$ is an abelian Lie group (of finite dimension), it will also be of this form: $H_0$ is a connected abelian Lie group of finite dimension, therefore of the form $\mathbb{R}^n \times (\unitcircle)^k$, and it is also open in $H$ since $H$ is locally connected. So $H$ contains an open injective subgroup, which implies $H \cong \mathbb{R}^n \times (\unitcircle)^k \times D$, for $D$ discrete, by Corollary \ref{Open injective submodule}. \\
In other words, the 3rd Structure Theorem implies that $G \in LCA$ is of Lie type if and only if it is isomorphic (as a topological group) to an abelian Lie group, which is also equivalent to admitting a Lie group structure. At the same time, Hidehiko Yamabe proved in \cite{Yamabe1953} that a locally compact group with the no small subgroups property is a Lie group (in the sense of admitting a Lie group structure), and it is also known that any Lie group of finite dimension has such property. \\
This means that the 3rd Structure Theorem for $LCA$ groups can be slightly altered to say that $G \in LCA$ has the NSS property if and only if $G \cong \mathbb{R}^n \times (\unitcircle)^k \times D$, for $n, k$ and $D$ as above. \\
With this in mind, in the case of modules we deal with the no small submodules property as well.

\begin{theorem}[3rd Structure Theorem]
	\label{3rd Structure Theorem}
	Let $M \in LC_{\mathbb{Z}\inv{S}}M$ have the no small submodules property. Then \[
		M \cong A \times (\dual{\mathbb{Z}\inv{S}})^k \times D,
	\]
	where $k \ge 0$, $A$ is a topological $\mathbb{A}_{\mathbb{Z}\inv{S}}$-module and $D$ is discrete, with $C(D) = T(D)$ a topological $\mathbb{Z}_{\mathbb{P}\setminus \Sigma}$ module.
\end{theorem}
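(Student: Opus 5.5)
The natural route is to dualize the 2nd Structure Theorem. Since $M$ has no small submodules, Proposition \ref{no small submodules implies dual is compactly generated} applied to $\dual{M}$ (using $M \cong \dual{\dual{M}}$ as topological $\mathbb{Z}\inv{S}$-modules) shows that $\dual{M}$ is compactly generated over $\mathbb{Z}\inv{S}$. By Theorem \ref{2nd Structure Theorem},
\[
	\dual{M} \cong A' \times (\mathbb{Z}\inv{S})^k \times K,
\]
where $A'$ is a topological $\mathbb{A}_{\mathbb{Z}\inv{S}}$-module, $k \ge 0$, and $K$ is a compact $\mathbb{Z}_{\mathbb{P}\setminus\Sigma}$ extension by $K_0$. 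Dualizing, using that $F_R$ commutes with finite products and that $\mathbb{A}_{\mathbb{Z}\inv{S}}$-modules are self-dual by Corollary \ref{modules over Adele ring}, gives
\[
	M \cong A \times (\dual{\mathbb{Z}\inv{S}})^k \times D,
\]
with $A \cong A'$ a topological $\mathbb{A}_{\mathbb{Z}\inv{S}}$-module and $D = \dual{K}$ discrete.

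Two checks remain. First, $A$ is an $\mathbb{A}_{\mathbb{Z}\inv{S}}$-module and not merely a $\mathbb{Z}\inv{S}$-module isomorphic to one. This follows because the isomorphism $A' \cong \mathbb{R}^n\times\mathbb{Q}_\Sigma$-type restricted product is $\mathbb{Z}\inv{S}$-linear, and we can transport the $\mathbb{A}_{\mathbb{Z}\inv{S}}$-structure along it. Second, and this is the main point, we need the description of $D$. Since $K$ is compact, $D$ is discrete. By Corollary \ref{elliptic and t.d. duality}, $C(D) = A(K_0)$, and for a discrete group the compact elements are exactly the torsion elements, so $C(D) = T(D)$. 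By Proposition \ref{annihilator isomorphisms},
\[
	C(D) = A(K_0) \cong \dual{K/K_0}.
\]
Here $K/K_0$ is a compact topological $\mathbb{Z}_{\mathbb{P}\setminus\Sigma}$-module, so by Theorem \ref{pontryagin dual of a module} its dual is a topological $\mathbb{Z}_{\mathbb{P}\setminus\Sigma}$-module (commutativity removes the left/right issue). The submodule $C(D)$ of $D$ is closed by the corollary on $C(M)$, and $A(K_0)$ carries the module structure induced from $\dual{K}$. We must make sure this structure is compatible with the $\mathbb{Z}\inv{S}$-structure on $D$, but since $\mathbb{Z}$ is dense in the profinite ring, the structures agree on $\mathbb{Z}$ and hence on $\mathbb{Z}\inv{S}$ automatically.

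The step I expect to need most care is the identification $C(D) = A_D(K_0)$ with its module structure under the isomorphism $D \cong \dual{K}$. The approach is to mirror the computation in the proof of Corollary \ref{elliptic and compactly generated}, run in reverse: there $K_0 = A(C(D))$, and Proposition \ref{annihilator isomorphisms}(2) then gives $C(D) = \orth{A(C(D))} = A(K_0)$.
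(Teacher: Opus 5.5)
Your proposal is correct and follows the paper's proof essentially step for step. Like the paper, you get $\dual{M}$ compactly generated from Proposition \ref{no small submodules implies dual is compactly generated} plus duality, dualize the 2nd Structure Theorem, use the self-duality of the $\mathbb{A}_{\mathbb{Z}\inv{S}}$-part, and identify $C(D)=A(K_0)\cong\dual{K/K_0}$ as a topological $\mathbb{Z}_{\mathbb{P}\setminus\Sigma}$-module; your extra compatibility check is not needed for the statement, and the actual reason the two $\mathbb{Z}\inv{S}$-actions agree is the uniqueness of $\mathbb{Z}\inv{S}$-module structures rather than the density of $\mathbb{Z}$, but this does no harm.
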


\begin{proof}
	By Proposition \ref{no small submodules implies dual is compactly generated} and Pontryagin Duality, $\dual{M}$ is compactly generated, therefore isomorphic to $A \times (\mathbb{Z}\inv{S})^k \times K$, where $n, k \ge 0$,  $A$ is a topological $\mathbb{A}_{\mathbb{Z}\inv{S}}$-module, and $K$ is a compact $\mathbb{Z}_{\mathbb{P}\setminus \Sigma}$ extension by $(\dual{M})_0 \cap C(\dual{M}) \cong 0 \times 0 \times K_0$. \\
	By dualizing, \[
		M \cong A \times (\dual{\mathbb{Z}\inv{S}})^k \times D,
	\]
	where $D = \dual{K}$ contains $C(D) = A_K(K_0) \cong \dual{K/K_0}$, which is a topological $\mathbb{Z}_{\mathbb{P}\setminus \Sigma}$-module since $K/K_0$ is. $A \cong \dual{A}$ by Corollary \ref{modules over Adele ring}.
\end{proof}

\begin{corollary}
	\label{duality betweeen compactly generated and nssm}
	Let $M \in LC_{\mathbb{Z}\inv{S}}M$. Then $M$ is compactly generated if and only if $\dual{M}$ has no small submodules.
\end{corollary}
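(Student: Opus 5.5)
One direction is already in hand. If $\dual{M}$ has no small submodules, Proposition \ref{no small submodules implies dual is compactly generated}, applied with $R = \mathbb{Z}\inv{S}$, shows directly that $M$ is compactly generated. This uses that $\mathbb{Z}\inv{S}$ is locally compact with the discrete topology and that $\dual{M}$ is a topological $\mathbb{Z}\inv{S}$-module by Theorem \ref{pontryagin dual of a module}.

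For the converse, suppose $M$ is compactly generated. By the 2nd Structure Theorem (Theorem \ref{2nd Structure Theorem}),
\[
	M \cong A \times (\mathbb{Z}\inv{S})^k \times K,
\]
where $A$ is an $\mathbb{A}_{\mathbb{Z}\inv{S}}$-module and $K$ is compact. Duality commutes with finite products, so
\[
	\dual{M} \cong \dual{A} \times \dual{(\mathbb{Z}\inv{S})^k} \times \dual{K}.
\]
A finite product of modules each having no small submodules again has none: take the product of the witnessing neighbourhoods, and note that the projection of a submodule contained in it is a submodule of each factor. So it suffices to treat each factor separately.

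The factor $\dual{A}$ is isomorphic to $A$ by Corollary \ref{modules over Adele ring}, which also states that such modules have no small submodules. The factor $\dual{(\mathbb{Z}\inv{S})^k}$ has no small submodules by Proposition \ref{fin generated implies no small submodules on dual}, since $(\mathbb{Z}\inv{S})^k$ is finitely generated. The factor $\dual{K}$ is discrete, so $\{0\}$ is a neighbourhood containing no nontrivial submodule.

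The only point needing care is the product lemma. Let $V_i$ be a neighbourhood witnessing the property for the $i$-th factor, and let $N \subset V_1 \times V_2 \times V_3$ be a submodule. Each projection $\pi_i(N)$ is a submodule contained in $V_i$, hence zero, so $N = 0$. The lemma is routine, so I expect no real obstacle.
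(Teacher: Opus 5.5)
Your proposal is correct and follows the paper's proof. The reverse implication is exactly Proposition \ref{no small submodules implies dual is compactly generated}, and the forward one dualizes the decomposition from the 2nd Structure Theorem and applies Corollary \ref{modules over Adele ring} and Proposition \ref{fin generated implies no small submodules on dual} to each factor; you only make explicit two points the paper leaves implicit, namely that $\dual{K}$ is discrete and that the no small submodules property passes to finite products.
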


\begin{proof}
	One of the implications was already given in Proposition \ref{no small submodules implies dual is compactly generated}. If $M$ is compactly generated, then we can apply the 2nd Structure Theorem, along with Corollary \ref{modules over Adele ring} and Proposition \ref{fin generated implies no small submodules on dual} to conclude that $\dual{M}$ has no small submodules.
\end{proof}

This means that even though $LC$ modules over $\mathbb{Z}\inv{S}$ satisfying the no small submodules property are not equivalent to $\mathbb{Z}\inv{S}$-modules of Lie type given in Definition \ref{Definition of module of Lie type}, they satisfy the analogous property to the $LCA$ groups of Lie type, which is precisely being the Pontryagin Duals of compactly generated modules in $LC_{\mathbb{Z}\inv{S}}M$. \\

\begin{corollary}
	Let $M \in LC_{\mathbb{Z}\inv{S}}M$. \begin{itemize}
		\item $M$ is an inverse limit (in $LC_{\mathbb{Z}\inv{S}}M$) of $LC$ $\mathbb{Z}\inv{S}$-modules that have no small submodules .
		\item Any neighbourhood of the identity in $M$ contains a compact submodule $K$ such that $M/K$ contains no small submodules.
	\end{itemize}
	The first statement is a direct consequence of the equivalence of categories $LC_{\mathbb{Z}\inv{S}}M \cong (LC_{\mathbb{Z}\inv{S}}M)^{op}$ given by Pontryagin Duality, along with the fact that $M \in LC_{\mathbb{Z}\inv{S}}M$ is the direct limit of its open compactly generated submodules, and Corollary \ref{duality betweeen compactly generated and nssm}. \\
	The second statement is a non-categorical analogue of the first, and its proof is given below.
\end{corollary}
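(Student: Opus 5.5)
Let $U$ be a neighbourhood of $0$ in $M$. The plan is to dualize the first statement: find a compactly generated open submodule $H$ of $\dual{M}$ whose annihilator $K = A'(H) := \orth{H}$ lies in $U$. Then $M/K$ will have dual $A(K) = H$, which is compactly generated. By Corollary \ref{duality betweeen compactly generated and nssm}, applied to $M/K$ (using $\dual{\dual{M/K}} \cong M/K$), the quotient $M/K$ has no small submodules.

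\textbf{Step 1: reduce to a basic neighbourhood.} By Lemma \ref{neighbourhood basis on dual}, applied to $\dual{M}$ and transported to $M \cong \dual{\dual{M}}$ via $\eta_M$, we may shrink $U$ to a set of the form
\[
	W_{L, \epsilon} = \{m \in M: \abs{\chi(m) - 1} < \epsilon \ \text{for all} \ \chi \in L\},
\]
where $L \subset \dual{M}$ is compact. We may also take $L$ to be a compact neighbourhood of the identity, since enlarging $L$ only shrinks $W_{L,\epsilon}$.

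\textbf{Step 2: build $H$ and check $K \subset U$.} Let $H$ be the $\mathbb{Z}\inv{S}$-submodule of $\dual{M}$ generated by $L$. Because $L$ is a neighbourhood of the identity, $H$ is open, hence closed, and it is compactly generated by construction. Set $K = \orth{H}$. This is a closed submodule of $M$ (by Proposition \ref{annihilator of submodule}, transported through duality), and it is compact by Corollary \ref{subgroup - annihilator topological relations}, since $H$ is open. Every $\chi \in L$ satisfies $\chi(k) = 1$ for all $k \in K$, so $K \subset W_{L,\epsilon} \subset U$.

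\textbf{Step 3: identify the dual of $M/K$.} By Proposition \ref{annihilator isomorphisms}, and its module version stated after Proposition \ref{annihilator of submodule}, we have $\dual{M/K} \cong A(K) = A(\orth{H}) = H$ as topological $\mathbb{Z}\inv{S}$-modules. Now $M/K$ is locally compact, and its dual $H$ is compactly generated. Corollary \ref{duality betweeen compactly generated and nssm}, applied to the module $\dual{M/K} \cong H$, says that $\dual{H}$ has no small submodules. Since $\dual{H} \cong \dual{\dual{M/K}} \cong M/K$ by the module version of Pontryagin duality, $M/K$ has no small submodules.

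The main point needing care is Step 3. One must check that the isomorphism $\dual{M/K} \cong A(K)$ respects the module structures, i.e.\ that it is induced by dualizing the $\mathbb{Z}\inv{S}$-linear quotient map, so that Corollary \ref{duality betweeen compactly generated and nssm} really applies to $M/K$. One must also check that the identification $A(\orth{H}) = H$ uses only that $H$ is closed, which holds because $H$ is open.
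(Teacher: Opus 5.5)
Your proposal is correct and is essentially the paper's own proof. The paper likewise shrinks $U$ to a basic set $\{m \in M : \abs{\chi(m)-1}<\epsilon \text{ for all } \chi \in V\}$ with $V$ a compact identity neighbourhood of $\dual{M}$, takes $K = \orth{(\mathbb{Z}\inv{S}V)}$ (compact because $\mathbb{Z}\inv{S}V$ is open), and deduces from $\dual{M/K} \cong A(K) = \mathbb{Z}\inv{S}V$ being compactly generated, via Corollary \ref{duality betweeen compactly generated and nssm}, that $M/K$ has no small submodules; the linearity check you flag in Step 3 is automatic, since any additive map between $\mathbb{Z}\inv{S}$-modules is $\mathbb{Z}\inv{S}$-linear, as noted in the introduction.
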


\begin{proof}
	Let $U$ be an open neighbourhood of $0$ in $M$. We can suppose without loss of generality that \[
		U = \{m \in M: \ \abs{\chi(m) - 1} < \epsilon \ \forall \chi \in V\},
	\]
	where $V$ is a compact neighbourhood of the identity in $\dual{M}$ and $\epsilon > 0$, by Corollary \ref{immediate consequences}. \\
	Consider $K = \orth{(\mathbb{Z}\inv{S}V)}$. Then $K$ is a submodule of $M$ contained in $U$, and $K$ is compact since $V$ has non-empty interior. At the same time, \[
		\dual{M/K} \cong A(K) = A(\orth{(\mathbb{Z}\inv{S}V)}) = \mathbb{Z}\inv{S}V \ (\mathbb{Z}\inv{S}V\ \text{is open in} \ \dual{M}),
	\]
	which means that $\dual{M/K}$ is compactly generated. By Corollary \ref{duality betweeen compactly generated and nssm}, $M/K$ has no small submodules.
\end{proof}

\begin{corollary}[Classification of locally compact vector spaces over $\mathbb{Q}$]
	\label{LC vector spaces over the rationals}
	Let $V \in LC_{\mathbb{Q}}M$. Then $V \cong A \times  \mathbb{Q}^{(I)} \times \dual{\mathbb{Q}}^J$, where $n \ge 0$, $A \in LC_{\mathbb{A}_{\mathbb{Q}}}M$ and $I, J$ are arbitrary sets.
\end{corollary}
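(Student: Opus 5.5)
The plan is to specialize the 1st Structure Theorem to $S$ generated by $\Sigma = \mathbb{P}$. In that case $\mathbb{Z}\inv{S} = \mathbb{Q}$, $\mathbb{A}_{\mathbb{Z}\inv{S}} = \mathbb{A}_{\mathbb{Q}}$, and $\mathbb{Z}_{\mathbb{P}\setminus\Sigma}$ is the empty product, i.e.\ the zero ring. The rest of the argument splits the remaining factor $N$ using the injectivity results of Subsection \ref{Adaptation of results and some extentions of definitions}.

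First, by Theorem \ref{1st Structure Theorem}, $V \cong A \times N$ with $A \in LC_{\mathbb{A}_{\mathbb{Q}}}M$. Moreover $N$ contains a compact open $\mathbb{Q}$-submodule $K$ which is a $\mathbb{Z}_{\mathbb{P}\setminus\Sigma}$ extension by $N_0$. So $K/K_0$ is a topological module over the zero ring. In such a module $1 = 0$ acts as the identity, so $K/K_0 = 0$ and $K = K_0 = N_0$ is connected. The only point needing care here is that the statement of Theorem \ref{1st Structure Theorem} really applies with this degenerate ring, i.e.\ that the proof of Lemma \ref{compactly generated elliptic and totally disconnected module} goes through with $N'' = 0$. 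I expect this to be the main (if mild) obstacle, and I would check it by rereading that lemma with $\Sigma = \mathbb{P}$. There $\hat{\mathbb{Z}} = \mathbb{Z}_\Sigma$, so the $N''$ factor vanishes and $C = 0$.

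Next, $K$ is a compact $\mathbb{Q}$-vector space, hence divisible, hence an injective $\mathbb{Q}$-module, and it is open in $N$. By Corollary \ref{Open injective submodule}, $N \cong K \times D$ with $D$ a discrete submodule. Then $D$ is a discrete $\mathbb{Q}$-vector space. Choosing a basis indexed by some set $I$ gives an algebraic isomorphism $D \cong \mathbb{Q}^{(I)}$, and it is automatically a homeomorphism since both sides are discrete.

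Finally, the dual $\dual{K}$ is a discrete $\mathbb{Q}$-vector space by Theorem \ref{pontryagin dual of a module}, so $\dual{K} \cong \mathbb{Q}^{(J)}$ for some set $J$. Dualizing, and using that duality turns direct sums of discrete modules into products of compact modules (as remarked after the equivalence of categories in Subsection \ref{Pontryagin Dual of a module}), we get $K \cong \dual{\mathbb{Q}^{(J)}} \cong \dual{\mathbb{Q}}^J$. This isomorphism is automatically $\mathbb{Q}$-linear, since additive maps between $\mathbb{Q}$-modules are linear. Assembling the pieces gives $V \cong A \times \mathbb{Q}^{(I)} \times \dual{\mathbb{Q}}^J$.
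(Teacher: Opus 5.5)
Your proposal is correct and follows the paper's proof. You apply the 1st Structure Theorem, split off the compact open $\mathbb{Q}$-subspace $K$ of $N$ via Corollary \ref{Open injective submodule} (using that $\mathbb{Q}$-vector spaces are injective), take a basis of the discrete factor $D$, and dualize to get $K \cong \dual{\mathbb{Q}}^J$. Your extra check that $K/K_0 = 0$ in the degenerate case $\Sigma = \mathbb{P}$ is valid but not needed for the argument.
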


\begin{proof}
	We know that $V \cong A \times N$ as given by the 1st Structure Theorem. However, in this case, $N$ contains a compact open $\mathbb{Q}$-vector space K, which makes a big difference as any vector space over $\mathbb{Q}$ is injective. By Lemma \ref{Open injective submodule}, there exists a discrete $\mathbb{Q}$-subvectorspace D of $N$ such that $N \cong K \times D$. By Pontryagin Duality, it is easy to classify all compact $\mathbb{Q}$-vector spaces, (reminder that Pontryagin Duality swaps direct sums of discrete modules with products of compact modules) and we get that $K \cong \dual{\mathbb{Q}}^J$, $D \cong \mathbb{Q}^{(I)}$.
\end{proof}

\subsection{Modules of Lie type}
\label{Modules of Lie type}

We will take for granted here that $\dual{\mathbb{Z}\inv{S}}$ is isomorphic to $\frac{\mathbb{A}_{\mathbb{Z}\inv{S}}}{\mathbb{Z}\inv{S}}$, where $\mathbb{Z}\inv{S}$ embeds anti-diagonally through $x \mapsto (x, -x, -x, -x, \dots)$. A proof of this fact is given for the case $\mathbb{Z}\inv{S} = \mathbb{Q}$ in \cite{ConradCharacterQ}, and for the generalized case the proof is practically the same, but instead of working over all prime numbers, we work with prime numbers in $\Sigma$ (note that in the case where $\Sigma$ is empty, we get the quotient $\mathbb{R}/\mathbb{Z} \cong \unitcircle$, as expected). \\
The anti-diagonal embedding of $\mathbb{Z}\inv{S}$ has a discrete image, since its intersection with $]-1, 1[ \times \mathbb{Z}_{\Sigma}$ is trivial. This means that $\dual{\mathbb{Z}\inv{S}}$ is locally isomorphic to $\mathbb{A}_{\mathbb{Z}\inv{S}}$ and locally compact $\mathbb{Z}\inv{S}$ modules of Lie type are those locally isomorphic to finite powers of $\mathbb{A}_{\mathbb{Z}\inv{S}}$. \\

Let $M \in LC_{\mathbb{Z}\inv{S}}M$ be of Lie type and $\phi\colon U \to V \subset M$ a local isomorphism from $\mathbb{R}^l \times (\mathbb{A}_{\mathbb{Z}\inv{S}, f})^l \cong (\mathbb{A}_{\mathbb{Z}\inv{S}})^l$ to $M$, for some $l \ge 0$. We can suppose $U = U_1 \times U_2$, where $U_1$ and $U_2$ are open identity neighbourhoods in $\mathbb{R}^l$ and $(\mathbb{A}_{\mathbb{Z}\inv{S}, f})^l$, respectively. \\
Analogously to the trick done in Lemma 4.2.19 of \cite{DeitmarEchterhoff2014}, we can extend $\phi$ to a continuous homomorphism $\psi\colon \mathbb{R}^l \times (\mathbb{A}_{\mathbb{Z}\inv{S}, f})^l$ as follows: Given $(v, a) \in \mathbb{R}^l \times (\mathbb{A}_{\mathbb{Z}\inv{S}, f})^l$, choose $s \in S$ such that $\frac{1}{s}v \in U_1$ and $sa \in U_2$, and define \[
	\psi(v, a) := s\phi(\frac{1}{s}v) + \frac{1}{s}\phi(sa).
\]
This is well defined: If $s, t \in S$ such that $\frac{1}{s}v, \frac{1}{t}v \in U_1$ and $sa, ta \in U_2$, then \[
	s\phi(\frac{1}{s}v) + \frac{1}{s}\phi(sa) = st\phi(\frac{1}{st}v) + \frac{1}{st}\phi(sta) = t\phi(\frac{1}{t}v) + \frac{1}{t}\phi(ta),
\]
where the above manipulations make sense precisely because $\phi$ is a local isomorphism of $\mathbb{Z}\inv{S}$-modules. \\

$\psi$ is additive: If $(v, a), (w, b) \in \mathbb{R}^l \times (\mathbb{A}_{\mathbb{Z}\inv{S}, f})^l$, choose $s \in S$ such that $\frac{1}{s}(v + w), \frac{1}{s}v, \frac{1}{s}w \in U_1$ and $s(a+b), sa, sb \in U_2$. Then \begin{align*}
	 & \psi(v+w, a+b) = s\phi(\frac{1}{s}(v+w)) + \frac{1}{s}\phi(s(a + b)) =                                   \\
	 & s(\phi(\frac{1}{s}v) + \phi(\frac{1}{s}w)) + \frac{1}{s}(\phi(sa) + \phi(sb)) = \psi(v, a) + \psi(w, b),
\end{align*}
where the above manipulations make sense because $\phi$ is locally additive. \\
$\psi$ being additive also makes it a $\mathbb{Z}\inv{S}$ homomorphism, and continuity at the identity makes $\psi$ continuous. Because $\psi|_{U} = \phi$, $\psi$ is locally injective and therefore has discrete kernel. \\
At the same time, $Im \ \psi$ contains $\phi(U)$, which is open and as such, $\psi$ has open image in $M$. As $\mathbb{R}^l \times (\mathbb{A}_{\mathbb{Z}\inv{S}, f})^l$ is divisible over $\mathbb{Z}\inv{S}$, $Im \ \psi$ also is, therefore being injective over $\mathbb{Z}\inv{S}$.\\
So $Im \ \psi$ is an open injective submodule of $M$ and by Corollary \ref{Open injective submodule}, $M \cong Im \ \psi \times D$, where $D$ is discrete. At the same time, because $\mathbb{R}^l \times (\mathbb{A}_{\mathbb{Z}\inv{S}, f})^l$ is $\sigma$-compact, $Im \ \psi \cong \frac{\mathbb{R}^l \times (\mathbb{A}_{\mathbb{Z}\inv{S}, f})^l}{\ker \psi}$, by the Open Mapping Theorem. \\
This means that classifying locally compact $\mathbb{Z}\inv{S}$-modules of Lie type in a similar way to what is given in the 3rd Structure Theorem for $LCA$ groups amounts to classifying quotients of finite powers of $\mathbb{A}_{\mathbb{Z}\inv{S}}$ by discrete submodules. \\
We also know that given $M \in LC_{\mathbb{Z}\inv{S}}M$ of Lie type, $M$ has no small submodules by Proposition \ref{Lie type implies NSSM}, which means we can see $M$ under the guise of the the 3rd Structure Theorem. \\
This leads to the following questions to end this article: \begin{enumerate}
	\item What are the discrete $\mathbb{Z}\inv{S}$-submodules of $(\mathbb{A}_{\mathbb{Z}\inv{S}})^l$ for $l \ge 1$, and what are their associated quotients up to isomorphism?
	\item Given $M \in LC_{\mathbb{Z}\inv{S}}M$ having no small submodules and \\ $M \cong \mathbb{R}^n \times \sideset{}{'}\prod_{q \in \Sigma}\mathbb{Q}_q^{n_q} \times (\dual{\mathbb{Z}\inv{S}})^k \times D$ as given by the the 3rd Structure Theorem, are there conditions on the numbers $n, (n_q)_{q \in \Sigma}$ and $k$ that characterize whether $M$ is of Lie type?
\end{enumerate}

\section*{Acknowledgments}
This research was conducted in partial fullfilment of the requirements for the Master's degree in Mathematics at the University of Porto, under the supervision of  Carlos André, from University of Lisbon, and co-supervision of  Samuel Lopes from the University of Porto, whose guidance and support are greatly appreciated.

\appendix{}
\section{Some elementary facts about topological groups and modules}

\begin{lemma}
	\label{conn.component}
	If $G$ is a topological group and $G_0$ is its connected component at the identity, $G_0$ is contained in any open subgroup of $G$. \\
	Furthermore, $G_0$ is a closed normal subgroup of $G$ such that the quotient $G/G_0$ is totally disconnected, and given $x \in G$, $xG_0$ is the connected component of $x$.
\end{lemma}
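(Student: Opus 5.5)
We have to prove Lemma \ref{conn.component}: in a topological group $G$, the identity component $G_0$ lies in every open subgroup, $G_0$ is a closed normal subgroup, $G/G_0$ is totally disconnected, and $xG_0$ is the connected component of $x$. The plan is to use only elementary point-set topology and the fact that translations, inversion and conjugations are homeomorphisms.

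\textbf{Algebraic and topological properties of $G_0$.} The closure of a connected set is connected, and $G_0$ is maximal among connected sets containing $e$, so $G_0$ is closed. For $x \in G_0$, the set $x^{-1}G_0$ is connected and contains $e$, so $x^{-1}G_0 \subset G_0$. This gives closure under products and inverses, so $G_0$ is a subgroup. For normality, conjugation $c_g$ is a homeomorphism fixing $e$, so $c_g(G_0)$ is connected and contains $e$. Hence $c_g(G_0) \subset G_0$.

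\textbf{Components of points.} Left translation $L_x$ is a homeomorphism, so it maps connected components onto connected components. It sends $e$ to $x$, so $xG_0$ is the component of $x$.

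\textbf{Open subgroups.} An open subgroup $H$ is also closed, because its complement is a union of cosets, each of which is open. Then $G_0 \cap H$ is a nonempty clopen subset of the connected set $G_0$, so $G_0 \subset H$.

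\textbf{Total disconnectedness of $G/G_0$.} This is the step I expect to need the most care. Let $\pi\colon G \to G/G_0$ be the quotient map, which is continuous and open. Let $C \subset G/G_0$ be connected with $G_0 \in C$; the goal is $C = \{G_0\}$, and translating then handles components of other points. Set $D = \pi^{-1}(C)$ and suppose $D = A \cup B$ with $A, B$ disjoint and relatively open in $D$.
\begin{itemize}
\item Each fibre $xG_0$ is connected, so it lies entirely in $A$ or entirely in $B$. Thus $A$ and $B$ are saturated.
\item Since $\pi$ is open and $A, B$ are saturated, $\pi(A)$ and $\pi(B)$ are disjoint relatively open subsets of $C$ covering $C$.
\item Connectedness of $C$ forces one of them to be empty. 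Hence $D$ is connected.
\end{itemize}
Since $D$ is connected and contains $e$, we get $D \subset G_0$, and therefore $C = \pi(D) = \{G_0\}$.
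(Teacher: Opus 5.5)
Your proof is correct and self-contained. The paper gives no proof of this lemma and only cites Chapter 4 of Deitmar--Echterhoff; your argument is the standard one: components are closed and invariant under translation and conjugation, open subgroups are clopen, and $\pi^{-1}(C)$ is connected because its fibres are connected cosets and $\pi$ is open. The one step you leave implicit is the relative openness of $\pi(A)$ in $C$; it follows by writing $A = D\cap U$ with $U$ open in $G$ and noting that $\pi(D\cap U) = C\cap \pi(U)$, since $D=\pi^{-1}(C)$ is a full preimage.
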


\begin{theorem}
	\label{Locally compact t.d.}
	If a locally compact topological group $G$ is totally disconnected, then it admits a neighbourhood basis at the identity consisting of compact open subgroups. The converse of this statement is a direct consequence of the previous lemma (since in such case, $G_0 = 0$). \\
	Furthermore, if the group is compact, the compact open subgroups can be taken to be normal.
\end{theorem}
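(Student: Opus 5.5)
We need to prove Theorem \ref{Locally compact t.d.}, the statement just above: a totally disconnected locally compact group $G$ has a neighbourhood basis at the identity consisting of compact open subgroups, and in the compact case these can be taken normal. The plan is the classical van Dantzig argument, carried out in three steps.

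\textbf{Step 1: a compact open neighbourhood inside a given one.} Fix an open neighbourhood $W$ of $e$, and let $V$ be a compact neighbourhood of $e$. In the compact Hausdorff space $V$, the connected component of $e$ is $\{e\}$, since $G$ is totally disconnected. In a compact Hausdorff space, components coincide with quasi-components, so $\{e\}$ is the intersection of the clopen subsets of $V$ containing $e$. The set $\partial V \cup (V\setminus W)$ is compact and misses $e$ (we may shrink $V$ inside $W$ first). By compactness, finitely many of these clopen sets already have an intersection $C$ avoiding it. Then $C$ is compact, closed in $G$, and contained in the interior of $V$, hence open in $G$. So $C$ is a compact open neighbourhood of $e$ with $C\subset W$.

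\textbf{Step 2: a compact open subgroup inside $C$.} For each $x\in C$ we have $xC\subset G\setminus (G\setminus C)$, and $G\setminus C$ is closed. A standard compactness argument (tube lemma on $C\times\{e\}$ inside the open set $\{(x,y): xy\in C\}$) gives a symmetric open neighbourhood $U$ of $e$ with $CU\subset C$. Then $U^n\subset C$ for all $n$ by induction, so $H=\bigcup_n U^n$ is a subgroup of $G$ contained in $C$. It is open because it contains $U$, hence closed, hence compact as a closed subset of $C$. This proves the first claim. The converse is immediate from Lemma \ref{conn.component}, as noted in the statement.

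\textbf{Step 3: normality when $G$ is compact.} Let $H$ be compact open. Its normalizer is open, so $G/N_G(H)$ is finite and $H$ has only finitely many conjugates $g_iHg_i^{-1}$. Their intersection is a normal open subgroup, compact, and contained in $H$.

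The main obstacle is Step 1: the passage from total disconnectedness to the existence of small clopen sets, via components equalling quasi-components in compact Hausdorff spaces. I would cite that fact as standard, rather than reprove it, and then do the finite-intersection argument carefully so that $C$ lies in the interior of $V$ and is therefore open in $G$.
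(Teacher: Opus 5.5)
Your proof is correct and is the standard van Dantzig argument: clopen sets obtained from components $=$ quasi-components in the compact Hausdorff space $V$, then a symmetric $U$ with $CU\subset C$ and $H=\bigcup_n U^n$, then intersecting the finitely many conjugates; the paper gives no proof of its own and defers to the beginning of Chapter 4 of \cite{DeitmarEchterhoff2014}, where this standard argument is the intended one. One small slip is the sentence asserting $xC\subset G\setminus(G\setminus C)=C$ for $x\in C$, which is false in general (it would make $C$ closed under multiplication) and is not needed, since the tube-lemma step only uses $x\cdot e=x\in C$, i.e.\ $C\times\{e\}\subset\{(x,y): xy\in C\}$.
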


\noindent{}  Proof of both above results is found, for example, at the beginning of chapter 4 in \cite{DeitmarEchterhoff2014}.

\begin{corollary}
	\label{Connected component of open subgroup}
	Let $G$ be a locally compact group. Then \[
		G_0 = \bigcap_{\mathclap{\substack{O \le G \\ O \ open}}} O.
	\] Furthermore, if $H$ is an open subgroup of $G$, $H_0 = G_0$.
\end{corollary}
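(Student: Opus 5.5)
We prove the two assertions of Corollary \ref{Connected component of open subgroup} in order. First we show that $G_0$ is the intersection of all open subgroups, and then we deduce the statement about open subgroups $H$.

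\textbf{The inclusion $G_0 \subset \bigcap O$.} This is Lemma \ref{conn.component}: $G_0$ is contained in every open subgroup.

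\textbf{The reverse inclusion.} Pass to the quotient $\pi\colon G \to G/G_0$. Since $G_0$ is a closed normal subgroup, $G/G_0$ is a locally compact group, and by Lemma \ref{conn.component} it is totally disconnected. Theorem \ref{Locally compact t.d.} then gives a neighbourhood basis at the identity of $G/G_0$ consisting of compact open subgroups $X$. Because $G/G_0$ is Hausdorff, the intersection of this basis is trivial. Each preimage $\invimage{\pi}{X}$ is an open subgroup of $G$. Hence
\[
\bigcap_{O \le G,\ O \text{ open}} O \ \subset\ \bigcap_X \invimage{\pi}{X} = \invimage{\pi}{\{e\}} = G_0 .
\]
The only point needing care is that $G/G_0$ is locally compact and Hausdorff. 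This holds because $G_0$ is closed and $\pi$ is an open continuous map, so the image of a compact neighbourhood is a compact neighbourhood.

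\textbf{The statement about $H$.} Let $H \le G$ be open. Then $H$ is also closed, since its complement is a union of open cosets, and it is locally compact.
\begin{itemize}
\item Since $G_0$ is connected, contains $e$, and lies in $H$ (by Lemma \ref{conn.component}), we have $G_0 \subset H_0$.
\item Conversely, $H_0$ is a connected subset of $G$ containing $e$, so $H_0 \subset G_0$.
\end{itemize}
Therefore $H_0 = G_0$. This part does not even need the first assertion.

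The only potentially delicate step is invoking Theorem \ref{Locally compact t.d.} for the quotient, which requires checking that $G/G_0$ is locally compact. The rest of the argument is routine.
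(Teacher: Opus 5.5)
Your proof is correct. For the first assertion you follow the paper's argument: pass to the totally disconnected, locally compact quotient $G/G_0$ and apply Theorem \ref{Locally compact t.d.}. The paper phrases this elementwise: an $x$ lying in every open subgroup of $G$ has $\pi(x)$ in every open subgroup of $G/G_0$, since these are the $O/G_0$. You instead pull back a basis of compact open subgroups, which amounts to the same thing.

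For the second assertion your route is genuinely different. The paper applies the first assertion to the locally compact group $H$. It notes that the open subgroups of $H$ are exactly the open subgroups of $G$ contained in $H$, so
\[
H_0 = \bigl(\bigcap_{O \le G \text{ open}} O\bigr) \cap H = G_0 \cap H = G_0 .
\]
You use only two facts: connected components are maximal connected sets, and a subset's connectedness is the same whether it is viewed in $H$ or in $G$.

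Your argument is more elementary and more general. It needs neither local compactness nor van Dantzig's theorem. It gives $H_0 = G_0$ for every subgroup $H$ containing $G_0$ in an arbitrary topological group. Openness of $H$ enters only through Lemma \ref{conn.component}, to guarantee $G_0 \subset H$. The paper's version is a one-liner once the first part is in hand, but it makes the second statement depend on the first and on the local compactness of $H$.
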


\begin{proof}
	Let $\pi\colon G \to G/G_0$ be the quotient map and let $x \in G$ such that $x$ is in all open subgroups of $G$. Then $\pi(x)$ is in all open subgroups of $G/G_0$ (since those are all of the form $O/G_0$, where $O$ is an open subgroup of $G$) and therefore $\pi(x) = 1$ by Theorem \ref{Locally compact t.d.}, which means that $x \in G_0$. \\
	Let $H \le G$ be open. Then $H$ is also locally compact so by the first statement of this corollary, \[
		H_0 = \bigcap_{\mathclap{\substack{O \le H \\ O \ open}}} O = \bigcap_{\mathclap{\substack{O \le G \\ O \ open}}} O \cap H = G_0 \cap H = G_0,
	\]
	since $G_0 \subset H$.
\end{proof}

\begin{theorem}
	\label{quotient by compact subgroup is compact}
	Given $K$ a compact subgroup of $G$, $G/K$ is compact if and only if $G$ is compact.
\end{theorem}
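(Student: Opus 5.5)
The statement concerns a topological group $G$ (Hausdorff by standing convention) and a compact subgroup $K$. The direction ``$G$ compact $\Rightarrow$ $G/K$ compact'' is immediate: the quotient map $\pi\colon G \to G/K$ is continuous and surjective, and continuous images of compact sets are compact. All the work is in the converse, and the plan is to prove it with a standard filter/open-cover argument. It uses only that $\pi$ is open and that $K$ is compact.

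For the converse, assume $G/K$ is compact and let $\mathcal{U}$ be an open cover of $G$. The steps are as follows.

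First, fix $x \in G$. The coset $xK$ is compact, so it is covered by finitely many members of $\mathcal{U}$. Let $W_x$ be their union; it is an open set containing $xK$.

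Second, and this is the key step, find an open neighbourhood $V_x$ of the identity such that $V_x x K \subseteq W_x$. This is the tube-lemma-type argument. The map $G \times G \to G$, $(g,y) \mapsto gy$, is continuous and sends $\{e\} \times xK$ into $W_x$. Since $xK$ is compact, the tube lemma gives an open $V_x \ni e$ with $V_x \cdot xK \subseteq W_x$.

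Third, observe that $V_x x K$ is open, being a union of translates of the open set $V_x x$. It is also saturated with respect to $K$, so its image $O_x = \pi(V_x x K)$ is open in $G/K$, because $\pi$ is an open map. Moreover $\pi^{-1}(O_x) = V_x x K \subseteq W_x$.

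Fourth, the sets $O_x$, $x \in G$, cover $G/K$. By compactness of $G/K$ there are finitely many points $x_1,\dots,x_m$ with $G/K = O_{x_1}\cup\dots\cup O_{x_m}$. Pulling back gives
\[
G = \bigcup_{i=1}^m \pi^{-1}(O_{x_i}) \subseteq \bigcup_{i=1}^m W_{x_i}.
\]
Each $W_{x_i}$ is a finite union of members of $\mathcal{U}$, so $\mathcal{U}$ has a finite subcover and $G$ is compact.

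The only delicate point is the second step, the uniform neighbourhood $V_x$. It requires the tube lemma applied to the compact set $xK$, which is exactly where compactness of $K$ enters. An alternative I would keep in reserve is to argue that $\pi$ is a proper map (closed with compact fibres). For this I would use the fact that for $F$ closed, $FK$ is closed, as the product of a closed set and a compact set; preimages of compact sets under a proper map are compact, which again yields the result. Hausdorffness is not actually needed for the cover argument above.
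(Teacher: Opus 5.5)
Your proof is correct and complete. The paper itself gives no proof of this statement. It appears in the appendix as an elementary fact, with no argument and no reference; the pointer to Deitmar--Echterhoff covers only the two earlier appendix results. So there is no route in the paper to compare yours with.

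Your open-cover argument is the standard one. Compactness of $K$ enters exactly once, through the tube lemma applied to the compact coset $xK$. The rest uses only that $V_x x K$ is open and $K$-saturated, so its image is open in $G/K$ with preimage exactly $V_x x K$. You do not even need openness of $\pi$ as a separate fact, since an open saturated set has open image by definition of the quotient topology.

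Your side remarks are also accurate. The direction $G$ compact $\Rightarrow$ $G/K$ compact does not use compactness of $K$. Neither Hausdorffness nor local compactness is needed for the converse.

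The reserve argument via properness of $\pi$ is equally valid. It moves the same tube-lemma input into the lemma that $FK$ is closed for $F$ closed and $K$ compact. It then invokes the general fact that closed maps with compact fibres pull back compact sets to compact sets. That route is more conceptual but less self-contained than your direct cover argument.
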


\begin{theorem}
	\label{Guaranteed uniform continuity}
	Let $G$, $G'$ be topological abelian groups, $S \subset G$ and $f\colon S  \to G'$ a function.
	If $f$ is a continuous map and $S$ is compact, then $f$ is necessarily uniformly continuous.
\end{theorem}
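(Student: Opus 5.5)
The plan is to reduce the statement to compactness of $f(S)$ together with an $\epsilon$-net argument on $S$, in the same way one proves the classical Heine--Cantor theorem. Here uniform continuity is meant with respect to the group uniformities. That is, for every neighbourhood $U$ of the identity in $G'$ there should exist a neighbourhood $W$ of the identity in $G$ such that whenever $x,y\in S$ and $x-y\in W$, we get $f(x)-f(y)\in U$.

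First, given $U$, choose a neighbourhood $U'$ of $0$ in $G'$ with $U'-U'\subset U$; this uses only the continuity of the group operations. For each $x\in S$, continuity of $f$ at $x$ gives an open neighbourhood $W_x$ of $0$ in $G$ such that $f((x+W_x)\cap S)\subset f(x)+U'$. Next choose an open neighbourhood $W'_x$ of $0$ with $W'_x+W'_x\subset W_x$. The sets $x+W'_x$, $x\in S$, cover the compact set $S$. By compactness, finitely many of them suffice, say those for $x_1,\dots,x_m$.

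Put $W=\bigcap_{i=1}^m W'_{x_i}$, which is an open neighbourhood of $0$. Now take $x,y\in S$ with $x-y\in W$, and pick $i$ with $y\in x_i+W'_{x_i}$. Then
\[
	x-x_i=(x-y)+(y-x_i)\in W'_{x_i}+W'_{x_i}\subset W_{x_i}.
\]
So both $x$ and $y$ lie in $(x_i+W_{x_i})\cap S$, hence $f(x),f(y)\in f(x_i)+U'$. It follows that $f(x)-f(y)\in U'-U'\subset U$.

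No step is a genuine obstacle; the only point requiring care is the \emph{halving} of neighbourhoods via $W'_x+W'_x\subset W_x$, which replaces the $\delta/2$ trick of the metric case. This argument uses commutativity only for convenience; the left and right uniformities agree in the abelian setting. For the application in Theorem \ref{pontryagin dual of a module} one takes $G=R\times M$ and $S=(r+V)\times K$, and the statement produced is exactly the one used there.
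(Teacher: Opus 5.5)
Your proof is correct and is essentially the paper's argument. The paper likewise takes $U'$ (symmetric) with $U'+U'\subset U$ and halves each continuity neighbourhood $W_x$ to $V_x$ with $V_x+V_x\subset W_x$; it then extracts a finite subcover of $S$ and compares both $f(x)$ and $f(y)$ with $f(x_l)$, where $l$ is the index with $y\in x_l+V_l$. Your explicit intersection with $S$, and your use of $U'-U'\subset U$ instead of symmetry, are only cosmetic refinements.
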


\begin{proof}
	Let $U$ be a neighbourhood of the identity in $G'$ and $U'$ such that $U' + U' \subset U$ and $U'$ is symmetric. For  each $x \in S$, there exists a symmetric unit neighbourhood $W_x$ in $G$ such that $f(x+ W_x) \subset f(x) + U'$, by continuity of $f$. \\
	Let $V_x$ be an open symmetric neighbourhood of $0$ in $G$ such that $V_x + V_x \subset W_x$. $\{x + V_x: x \in S\}$ is an open cover of S, so by compactness, \[S \subset \bigcup_{i=1}^n x_i + V_i,\]
	with $V_i := V_{x_i}$, for a finite family $x_1, \dots, x_n \in G$.

	Define $V = \bigcap_{i=1}^n V_i$. V is a symmetric open neighbourhood of $0$ in $G$. Let $x, y \in S$ with $x = x_k + v_k$, $y = x_l + v_l$, for some $k, l \in \{1, \dots, n\}$, and $v_k \in V_k$, $v_l \in V_l$. \\
	Suppose $x - y \in V$. $x - x_l = x - (y - v_l) = (x-y) + v_l \in V + V_l \subset V_l + V_l \subset W_{x_l}$. Therefore, by definition of $W_{x_l}$, $f(x) = f(x_l + (x-x_l)) \subset f(x_l) + U'$. Analogously, $y - x_l = v_l \in V_l \subset W_{x_l}$, so $f(y) = f(x_l + (y-x_l)) \subset f(x_l) + U'$. As such, $f(x) - f(y) \in f(x_l) + U' - f(x_l) - U' = U' - U' = U' + U' \subset U$.
\end{proof}

\begin{prop}
	\label{product of sigma compact}
	If $G$ is a topological group and $A, B \subset G$ are $\sigma$-compact, then $AB$ also is.
\end{prop}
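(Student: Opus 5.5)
Write $A = \bigcup_{n \in \mathbb{N}} A_n$ and $B = \bigcup_{m \in \mathbb{N}} B_m$, where each $A_n$ and each $B_m$ is compact; such decompositions exist by the definition of $\sigma$-compactness. The goal is to exhibit $AB$ as a countable union of compact sets. The natural candidates are the sets $A_nB_m$, indexed by pairs $(n,m) \in \mathbb{N} \times \mathbb{N}$.

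The argument has three steps.

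First, check the set equality
\[
AB = \bigcup_{(n,m) \in \mathbb{N}\times\mathbb{N}} A_nB_m.
\]
If $ab \in AB$ with $a \in A$ and $b \in B$, then $a \in A_n$ and $b \in B_m$ for some $n$ and $m$, so $ab \in A_nB_m$. The reverse inclusion is immediate, since each $A_n \subset A$ and each $B_m \subset B$.

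Second, show that each $A_nB_m$ is compact. The multiplication map $\mu\colon G \times G \to G$, $(x,y) \mapsto xy$, is continuous because $G$ is a topological group. The product $A_n \times B_m$ is compact in $G \times G$ by Tychonoff's theorem for two factors. Hence $A_nB_m = \mu(A_n \times B_m)$ is compact, as the continuous image of a compact set.

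Third, observe that $\mathbb{N}\times\mathbb{N}$ is countable. So $AB$ is a countable union of compact sets, that is, $\sigma$-compact.

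There is no real obstacle here. The only point needing care is that compactness in this paper includes no separation axiom beyond the standing Hausdorff assumption, and the continuous-image argument does not need one anyway. The same reasoning gives $\sigma$-compactness of $A + B$ in additive notation, and of finite products $A_1\cdots A_k$ by induction on $k$.
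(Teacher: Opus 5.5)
Your proof is correct and takes essentially the same approach as the paper: decompose $A=\bigcup_n A_n$ and $B=\bigcup_k B_k$ into compact pieces, then write $AB=\bigcup_{n,k}A_nB_k$ as a countable union. You also spell out why each $A_nB_k$ is compact, namely as the continuous image of $A_n\times B_k$ under multiplication, which the paper leaves implicit.
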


\begin{proof}
	Write $A = \bigcup_{n \in \mathbb{N}} A_n$ and $B = \bigcup_{k \in \mathbb{N}} B_k$, where each $A_n$ and $B_k$ are compact. \\
	Then by definition, $AB = \bigcup_{n = 1}A_n B = \bigcup_{n \in \mathbb{N}}A_n(\bigcup_{k \in \mathbb{N}}B_k) = \bigcup_{n \in \mathbb{N}}\bigcup_{k \in \mathbb{N}}A_nB_k$, which is a countable union of compact sets.
\end{proof}

\begin{prop}
	\label{compactly generated over sigma compact ring}
	Let $R$ be a $\sigma$-compact topological ring and $M$ a compactly generated topological $R$-module. Then $M$ is also $\sigma$-compact.
\end{prop}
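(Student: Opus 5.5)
The plan is to write $M$ as $RK$ for a suitable compact $K$, exhaust $R$ by compact sets, and then use Proposition \ref{product of sigma compact} adapted from products in a group to images under the continuous scalar multiplication map.

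Since $M$ is compactly generated, fix a compact $K \subset M$ with $M = RK$. Because $R$ is $\sigma$-compact, write $R = \bigcup_{n \in \mathbb{N}} R_n$ with each $R_n$ compact.

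Here $RK$ must be read as the submodule generated by $K$. This is the set of finite sums $r_1k_1 + \dots + r_mk_m$ with $r_i \in R$ and $k_i \in K$, not merely the set of products $\{rk\}$. So I will set
\[
	P := \{rk : r \in R, \ k \in K\}.
\]
Then $M = \bigcup_{m \ge 1} mP$, where $mP$ denotes the $m$-fold sumset $P + \dots + P$. Here $0 \in P$ if $K$ is nonempty, since $0 \cdot k = 0$; if $K$ is empty, $M = 0$ and the claim is trivial.

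The key steps, in order, are the following.

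First, I show that $P$ is $\sigma$-compact. The scalar multiplication $f\colon R \times M \to M$ is continuous, and $P = f(R \times K) = \bigcup_n f(R_n \times K)$. Each $R_n \times K$ is compact, so each $f(R_n \times K)$ is compact. This is exactly the argument of Proposition \ref{product of sigma compact}, with the group product replaced by the module action.

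Second, I show that each sumset $mP$ is $\sigma$-compact. This follows by induction from Proposition \ref{product of sigma compact} applied in the additive group $(M,+)$: if $A$ and $B$ are $\sigma$-compact, so is $A + B$. Explicitly, $mP = \bigcup_{n_1,\dots,n_m} \bigl(f(R_{n_1}\times K) + \dots + f(R_{n_m}\times K)\bigr)$, a countable union of continuous images of compact sets under addition.

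Third, $M = \bigcup_{m} mP$ is a countable union of $\sigma$-compact sets, and is therefore $\sigma$-compact.

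There is no real obstacle here. The only point needing care is the interpretation of $RK$ as the generated submodule rather than the set of products, which is why the union over sumsets $mP$ is needed. Neither local compactness of $M$ nor Hausdorffness is used beyond the fact that continuous images of compact sets are compact.
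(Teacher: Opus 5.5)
Your proposal is correct and follows essentially the same route as the paper: the paper also writes $M=\bigcup_k M_k$, where $M_k$ is the $k$-fold sumset of $\underline{RC}=\bigcup_n \underline{K_nC}$. It uses continuity of scalar multiplication to get $\sigma$-compactness of $\underline{RC}$, and then applies Proposition \ref{product of sigma compact} to the finite sums, reading $RK$ as the generated submodule exactly as you do.
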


\begin{proof}
	Write $R = \bigcup_{n = 1}^{+\infty} K_n$, where each $K_n$ is compact and $M = RC$, for some $C \subset M$ compact. \\
	Letting $M_k = \{\sum_{i=1}^k r_ic_i: \ r_i \in R, \ c_i \in C\}$ for each $k \in \mathbb{N}$ we get that by definition of submodule generated by a set, $M = \bigcup_{k = 1} M_k$, so it suffices to show that each $M_k$ is $\sigma$-compact.  \\
	Defining $\underline{XC} := \{xc: \ x \in X, \ c \in C\}$ for $X \subset R$, we get that $M_k = \underbrace{\underline{RC} + \dots + \underline{RC}}_{\text{k \ times}}$, and each $\underline{RC}$ is equal to $\bigcup_{n = 1}^{+ \infty}\underline{K_nC}$, which is $\sigma$-compact by continuity of multiplication. \\
	Since, each $M_k$ is a finite sum of $\sigma$-compact sets, $M$ is $\sigma$-compact by Proposition \ref{product of sigma compact}.
\end{proof}

It is well known, algebraically, that given a ring $R$ and an ideal $I$, there is a correspondence between $R/I$-modules and $R$-modules annihilated by $I$. The following lemma shows that this correspondence also holds when talking about topological modules.

\begin{lemma}
	\label{modules over quotients}
	Let $R$ be a topological ring and $M$ a topological $R$-module. If $I$ is an ideal of $R$ such that $IM = 0$, then $M$ admits a topological $R/I$-module structure. Conversely, any topological $R/I$-module structure on $M$ induces a topological $R$-module structure on $M$ such that $IM = 0$.
\end{lemma}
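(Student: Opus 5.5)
The statement to prove is Lemma \ref{modules over quotients}: topological $R$-modules annihilated by an ideal $I$ correspond to topological $R/I$-modules. I will treat the two directions separately. In each, the algebra is the classical correspondence, so the only real content is continuity of the scalar multiplication. Write $\pi\colon R \to R/I$ for the quotient map and $f\colon R \times M \to M$ for the given action.

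\emph{First direction.} Suppose $IM = 0$. Define $\bar f\colon R/I \times M \to M$ by $\bar f(r+I, m) = rm$. This is well defined because $r - r' \in I$ implies $rm - r'm \in IM = 0$. The module axioms pass down from $R$ at once. For continuity, the key observation is that $\pi \times \mathrm{Id}_M\colon R \times M \to R/I \times M$ is an open continuous surjection. Indeed, $\pi$ is open because quotient maps of topological groups are open, and a product of open maps is open. Hence $\pi \times \mathrm{Id}_M$ is a quotient map. Since $\bar f \circ (\pi \times \mathrm{Id}_M) = f$ is continuous, $\bar f$ is continuous by the universal property of quotient maps.

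This is the one step needing care. In general, a product of a quotient map with an identity need not be a quotient map. Here openness of $\pi$ rescues it, and I should state that explicitly rather than invoke a general product-of-quotients fact.

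\emph{Converse direction.} Suppose $g\colon R/I \times M \to M$ is a continuous $R/I$-action. Define $f = g \circ (\pi \times \mathrm{Id}_M)$. This is a composition of continuous maps, hence continuous. It satisfies the $R$-module axioms because $\pi$ is a unital ring homomorphism, and $IM = 0$ because $\pi(I) = 0$.

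Finally, I will note that the two constructions are mutually inverse. This gives the claimed correspondence, and the proof is complete.
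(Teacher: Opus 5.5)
Your proof is correct and takes essentially the same route as the paper. The paper proves continuity by computing $\bar f^{-1}(U) = (\pi\times\mathrm{Id}_M)(f^{-1}(U))$ and using that $\pi\times\mathrm{Id}_M$ is open, which is exactly your observation that this open continuous surjection is a quotient map; the converse is the same composition argument in both.
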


\begin{proof}
	It is well known that the correspondence given in the lemma works on an algebraic level, so we only need to check that the respective maps are continuous. \\
	Let $f: R \times M \to M$ be the usual module structure and \begin{align*}
		f'\colon R/I \times M & \to M      \\
		(r+I, m)              & \mapsto rm
	\end{align*}
	the induced action. If $\pi: R\times M \to R/I \times M$ is the quotient map, then given $U \subset M$ open, \[
		\invimage{f'}{U} = \{(r + I, m): \ rm \in U\} = \pi(\invimage{f}{U}),
	\]
	which is open since $\pi$ is an open map and $f$ is continuous. \\
	If $M$ is a topological $R/I$-module, then \begin{align*}
		f\colon R \times M & \to M            \\
		(r, m)             & \mapsto (r + I)m
	\end{align*}
	is continuous since it is equal to $f' \circ \pi$ and $f'$ is continuous (the topology on $R/I \times M$ is naturally identified with the quotient topology on $\frac{R \times M}{I \times 0}$, which makes this argument work).
\end{proof}

\subsection{profinite integers and finite dimensional p-adic vector spaces}

\begin{definition}
	\label{definition of profinite integers}
	The ring of profinite integers $\hat{\mathbb{Z}}$ is the set of formal series of the form \[
		\hat{\mathbb{Z}} \ni x = \sum_{i=0}^{+ \infty} c_i i!,
	\]
	where $0 \le c_i \le i$ for all $i \ge 0$. The topology on $\hat{\mathbb{Z}}$ is induced by the neighbourhood basis at the identity consisting of the sets of the form $n\hat{\mathbb{Z}}$ for $n \in \mathbb{N}$. \\
	The integers embed into $\hat{\mathbb{Z}}$ since any integer can be represented in the above manner, where only a finite number of the components of the sum are non-zero. This also means that $\mathbb{Z}$ can be seen as a dense subring of $\hat{\mathbb{Z}}$.
\end{definition}

\begin{theorem}
	\label{product of all p-adic integers}
	The profinite integers are a profinite topological ring. Furthermore, there exists an isomorphism \[
		\hat{\mathbb{Z}} \cong \prod_{q \ prime} \mathbb{Z}_q
	\]
	of topological rings, where $\mathbb{Z}_q$ is the ring of q-adic integers.
\end{theorem}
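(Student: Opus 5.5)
The plan is to realize $\hat{\mathbb{Z}}$ as the inverse limit $\varprojlim \mathbb{Z}/n\mathbb{Z}$ and compare it with $\prod_q \mathbb{Z}_q = \prod_q \varprojlim_k \mathbb{Z}/q^k\mathbb{Z}$ through the Chinese Remainder Theorem.

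\textbf{Step 1: $\hat{\mathbb{Z}}$ is a profinite topological ring.} For each $n\in\mathbb{N}$, the partial sum $\sum_{i<n} c_i i!$ determines $x$ modulo $n!$, since every later term $c_i i!$ with $i\ge n$ is divisible by $n!$. This gives a map
\[
\hat{\mathbb{Z}} \to \varprojlim_n \mathbb{Z}/n!\mathbb{Z} \cong \varprojlim_{m} \mathbb{Z}/m\mathbb{Z}.
\]
The last isomorphism holds because the factorials are cofinal in $\mathbb{N}$ ordered by divisibility.

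The map is a bijection: every compatible system of residues has a unique factorial-base expansion, with the digits $c_i$ read off successively. Under this bijection the sets $n!\hat{\mathbb{Z}}$ correspond to the kernels of the projections to $\mathbb{Z}/n!\mathbb{Z}$. Hence the basis $\{n\hat{\mathbb{Z}}\}$ matches the inverse-limit (product-subspace) topology.

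I will take the ring operations on $\hat{\mathbb{Z}}$ to be those transported from the inverse limit; these extend the operations on $\mathbb{Z}$, which is dense. An inverse limit of finite discrete rings is a closed subring of a compact totally disconnected product, so it is a profinite topological ring.

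\textbf{Step 2: the isomorphism with $\prod_q \mathbb{Z}_q$.} For each prime $q$ and each $k$, reduction $\mathbb{Z}/q^k m'\to\mathbb{Z}/q^k$ gives compatible continuous ring maps $\hat{\mathbb{Z}}\to\mathbb{Z}/q^k\mathbb{Z}$. These assemble into continuous ring homomorphisms $\hat{\mathbb{Z}}\to\mathbb{Z}_q$, and hence into a continuous ring homomorphism
\[
\Phi\colon \hat{\mathbb{Z}}\to\prod_q\mathbb{Z}_q.
\]

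Injectivity: if $\Phi(x)=0$, then for every $m=\prod q^{k_q}$, the Chinese Remainder Theorem $\mathbb{Z}/m\cong\prod_q\mathbb{Z}/q^{k_q}$ shows that $x\equiv 0 \pmod m$. Thus $x$ is trivial in the inverse limit.

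Surjectivity: the image of $\Phi$ is compact, hence closed. It contains the image of $\mathbb{Z}$. By CRT, $\mathbb{Z}$ surjects onto every finite product $\prod_{q\in F}\mathbb{Z}/q^{k_q}$, and these quotients define the basic open sets of the product topology. So the image of $\mathbb{Z}$ is dense, and the closed image is everything.

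A continuous bijection from a compact space to a Hausdorff space is a homeomorphism. Therefore $\Phi$ is an isomorphism of topological rings.

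\textbf{Main obstacle.} The only delicate point is Step 1. The paper defines $\hat{\mathbb{Z}}$ via factorial-base formal series, so the ring structure (carries in addition and multiplication) is not given explicitly. I would resolve this by \emph{defining} the operations through the bijection with $\varprojlim\mathbb{Z}/m\mathbb{Z}$. It then remains to check that these operations are continuous for the basis $n\hat{\mathbb{Z}}$, which amounts to checking that each $n\hat{\mathbb{Z}}$ is an ideal of finite index. After that, everything reduces to CRT together with the compact–Hausdorff argument.
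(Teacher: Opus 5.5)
The paper records this theorem in the appendix as a standard fact and gives no proof, so there is no argument of the paper's to compare with. Your proof is correct and is the standard one. You identify the factorial-base series with $\varprojlim \mathbb{Z}/m\mathbb{Z}$ via cofinality of the factorials and transport the ring structure. The Chinese Remainder Theorem then gives injectivity and density of $\mathbb{Z}$, and the compact-to-Hausdorff argument gives bicontinuity.

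The one assertion you leave unjustified is that $n\hat{\mathbb{Z}}$ coincides with $\ker\pi_n$, where $\pi_n\colon\hat{\mathbb{Z}}\to\mathbb{Z}/n\mathbb{Z}$ is the projection. Only $n\hat{\mathbb{Z}}\subseteq\ker\pi_n$ is obvious. The reverse inclusion is what makes the paper's basis $\{n\hat{\mathbb{Z}}\}$ agree with the inverse-limit topology, and it is also what gives finite index. It is a short check. Suppose $x=(x_m)$ has $x_n=0$. Then $x_{nm}$ lies in $n\mathbb{Z}/nm\mathbb{Z}$, which multiplication by $n$ identifies with $\mathbb{Z}/m\mathbb{Z}$. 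So $x_{nm}=n\,y_m$ for a unique $y_m\in\mathbb{Z}/m\mathbb{Z}$. The $y_m$ are compatible, and $y=(y_m)$ satisfies $ny=x$.

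Defining the operations through the inverse limit also gets you around a real defect in the paper's definition. In your setup $\mathbb{Z}$ embeds diagonally, and negative integers have infinite factorial expansions. For instance, $-1=\sum_{i\ge 1} i\cdot i!$, since the partial sums are $n!-1$. This contradicts the paper's remark that every integer has only finitely many nonzero digits.
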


\begin{lemma}
	\label{proper closed subgroups of p-adic numbers}
	Any proper nontrivial closed subgroup of the p-adic numbers $\mathbb{Q}_p$ is of the form $p^n\mathbb{Z}_p$ for some $n \in \mathbb{Z}$.
\end{lemma}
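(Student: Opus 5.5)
Let $H$ be a proper nontrivial closed subgroup of $\mathbb{Q}_p$. The plan is to sandwich $H$ between two consecutive members of the chain $\dots \subset p\mathbb{Z}_p \subset \mathbb{Z}_p \subset p^{-1}\mathbb{Z}_p \subset \dots$. We use the valuation $v_p$ and the standard facts that the sets $p^n\mathbb{Z}_p=\{x : v_p(x)\ge n\}$, for $n\in\mathbb{Z}$, are the closed balls around $0$, are compact open subgroups, and exhaust $\mathbb{Q}_p$.

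The first key step is to show that $H$ is a $\mathbb{Z}_p$-submodule. Take $h\in H$. Since $\mathbb{Z}$ is dense in $\mathbb{Z}_p$ and multiplication $\mathbb{Z}_p\times\mathbb{Q}_p\to\mathbb{Q}_p$ is continuous, the closed subgroup $H \supset \mathbb{Z}h$ also contains $\closure{\mathbb{Z}h}=\mathbb{Z}_ph$. Now if $v_p(h)=n$, then $\mathbb{Z}_ph=p^n\mathbb{Z}_p$, because $h=p^nu$ with $u$ a unit. Hence
\[
H=\bigcup_{h\in H}p^{v_p(h)}\mathbb{Z}_p .
\]

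The second step is to pin down this union. Let $N=\{v_p(h): h\in H\setminus\{0\}\}\subset\mathbb{Z}$, which is nonempty because $H$ is nontrivial. If $N$ is bounded below, let $n=\min N$; the union is then $p^n\mathbb{Z}_p$, since the balls are nested. If $N$ is not bounded below, the union is all of $\bigcup_n p^n\mathbb{Z}_p=\mathbb{Q}_p$, contradicting that $H$ is proper. So $H=p^n\mathbb{Z}_p$.

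The main obstacle is only the first step, the passage from $\mathbb{Z}h$ to $\mathbb{Z}_ph$; closedness of $H$ is used exactly there. The remaining steps are bookkeeping with valuations. Note that the argument never needs $N$ to be bounded above: an unbounded $N$ simply contributes smaller balls, which are already contained in the larger ones.
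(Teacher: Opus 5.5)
Your proof is correct and complete. The paper gives no proof of this lemma: it is stated in the appendix as a standard fact and used only in Corollary~\ref{compact open subgroups of p-adic vector space}, so there is no argument in the paper to compare yours with.

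Your key step is that a closed subgroup of $\mathbb{Q}_p$ is automatically a $\mathbb{Z}_p$-submodule, since $\mathbb{Z}_p h$ lies in the closure of $\mathbb{Z}h$ by density of $\mathbb{Z}$ and continuity. This is the same density principle the paper uses for topological $\hat{\mathbb{Z}}$-modules (``any closed subgroup of a topological $\hat{\mathbb{Z}}$-module is a submodule''). As you say, it is the only place closedness is needed. The rest is valuation bookkeeping, which you handle correctly, including the case where the set of valuations is unbounded below and forces $H=\mathbb{Q}_p$.

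One cosmetic point: take the union $\bigcup_{h\in H}p^{v_p(h)}\mathbb{Z}_p$ over $h\in H\setminus\{0\}$, or adopt the convention $p^{v_p(0)}\mathbb{Z}_p=\{0\}$. Either way nothing changes, since $H\neq 0$.
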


\begin{corollary}
	\label{compact open subgroups of p-adic vector space}
	For $n \ge 1$, any compact open subgroup of $\mathbb{Q}_p^n$ is of the form $\prod_{i = 1}^n p^{n_i}\mathbb{Z}_p$.
\end{corollary}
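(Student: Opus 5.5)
Write $K = \prod_{i=1}^n p^{n_i}\mathbb{Z}_p$ as the target shape, and let $C$ be a compact open subgroup of $\mathbb{Q}_p^n$. The plan is to show that $C$ is a $\mathbb{Z}_p$-submodule trapped between two lattices of the form $p^{a}\mathbb{Z}_p^n \subset C \subset p^{-b}\mathbb{Z}_p^n$, and then to read off its shape.

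First, $C$ is a closed subgroup of $\mathbb{Q}_p^n$. Because $\mathbb{Z}$ is dense in $\mathbb{Z}_p$ and the action is continuous, $C$ is a $\mathbb{Z}_p$-submodule (the same density argument the paper uses for $\hat{\mathbb{Z}}$). The sets $p^m\mathbb{Z}_p^n$ form a neighbourhood basis of $0$, so openness gives some $a$ with $p^a\mathbb{Z}_p^n \subset C$. Compactness of $C$ means each coordinate projection is bounded, which gives $b$ with $C \subset p^{-b}\mathbb{Z}_p^n$. Hence $C/p^{a}\mathbb{Z}_p^n$ is a submodule of the finite module $p^{-b}\mathbb{Z}_p^n/p^a\mathbb{Z}_p^n$, and $C$ is a finitely generated $\mathbb{Z}_p$-module of rank $n$ (torsion-free, since it contains a full lattice).

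Second, $\mathbb{Z}_p$ is a PID, so $C$ is free of rank $n$. The honest conclusion is therefore that $C = g(\mathbb{Z}_p^n)$ for some $g \in GL_n(\mathbb{Q}_p)$. By elementary divisors (Smith normal form over the DVR $\mathbb{Z}_p$), one can choose a basis $e_1',\dots,e_n'$ of $\mathbb{Z}_p^n$ with $C = \bigoplus p^{n_i}\mathbb{Z}_p e_i'$.

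The main obstacle is the literal statement: as written, with coordinates fixed, it is false. For example, $\mathbb{Z}_p(1,1) + p\mathbb{Z}_p^2 \subset \mathbb{Q}_p^2$ is compact and open but is not a product of the form $\prod p^{n_i}\mathbb{Z}_p$. The correct assertion is that $C$ has this form \emph{after a $\mathbb{Q}_p$-linear automorphism of $\mathbb{Q}_p^n$}, equivalently in a suitable basis. I would state and prove this version. It is exactly what the proof of Theorem \ref{invariance of choice of compact open subgroup for p-adic vector spaces} needs, since there one may compose $\phi_p$ with any linear automorphism, including one sending $C$ to $\mathbb{Z}_p^n$ directly. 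Lemma \ref{proper closed subgroups of p-adic numbers} covers only the case $n=1$, where the literal statement does hold.
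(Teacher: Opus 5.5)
Your counterexample is valid, and the statement is false as written. The set $C=\mathbb{Z}_p(1,1)+p\mathbb{Z}_p^2=\{(x,y)\in\mathbb{Z}_p^2: x\equiv y \bmod p\}$ is a compact open subgroup with $\pi_1(C)=\pi_2(C)=\mathbb{Z}_p$. The only product of the required form it could be is therefore $\mathbb{Z}_p^2$, yet $(1,0)\notin C$.

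The paper's proof breaks at the sentence ``since each $l_i$ is minimal, this actually implies $k_i=l_i$''. Because the paper's $K$ is a group, $\prod_i p^{l_i}\mathbb{Z}_p\subset K$ just means that each $p^{l_i}\mathbb{Z}_p$ lies in the intersection of $K$ with the $i$-th coordinate axis. Minimality therefore only identifies $p^{l_i}\mathbb{Z}_p$ with that intersection, so $\prod_i p^{l_i}\mathbb{Z}_p$ is the largest box inside $K$. By contrast, $p^{k_i}\mathbb{Z}_p=\pi_i(K)$ describes the smallest box containing $K$. These two boxes coincide exactly when $K$ is already a box, so the sandwich argument is circular. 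In your example $l_1=l_2=1$ but $k_1=k_2=0$.

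Your replacement argument is sound. $C$ is closed, hence a $\mathbb{Z}_p$-submodule by density of $\mathbb{Z}$. The inclusions $p^a\mathbb{Z}_p^n\subset C\subset p^{-b}\mathbb{Z}_p^n$ make it finitely generated of rank $n$. Being torsion-free over the PID $\mathbb{Z}_p$, it is free, so $C=g(\mathbb{Z}_p^n)$ with $g\in GL_n(\mathbb{Q}_p)$. Two small points, neither of which affects the argument:
torsion-freeness comes from $C\subset\mathbb{Q}_p^n$, not from containing a lattice;
in the elementary-divisor form the $n_i$ may be negative, since $C$ need not lie in $\mathbb{Z}_p^n$ (apply Smith normal form to $p^bC\subset\mathbb{Z}_p^n$).

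You are also right that the corrected version is all the paper needs. The only place the corollary is invoked is the proof of Theorem \ref{invariance of choice of compact open subgroup for p-adic vector spaces}. There, composing $\phi_p$ with $g^{-1}$ instead of the diagonal rescaling by $(p^{-k_{i,p}})$ still gives $\phi_p(Z_p)=\mathbb{Z}_p^{n_p}$. The later choice of the $U_q$ as preimages of boxes is merely a choice of basic neighbourhoods and does not rely on the corollary.
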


\begin{proof}
	Let $K$ be a compact open subgroup of $\mathbb{Q}_p^n$. Since $K$ is open, it contains a set of the form $\prod_{i = 1}^n p^{l_i}\mathbb{Z}_p$, for $l_i \in \mathbb{Z}$. Furthermore, we can suppose that each $l_i$ is minimal, because of compactness of $K$. At the same time, letting $\pi_i : \mathbb{Q}_p^n \to \mathbb{Q}_p$ be the $i$-th coordinate projection for each $i \in \{1, \dots, n\}$, $\pi_i(K)$ is a compact non-trivial subgroup of $\mathbb{Q}_p$ and therefore equal to $p^{k_i}\mathbb{Z}_p$ for some $k_i \in \mathbb{Z}$ by Lemma \ref{proper closed subgroups of p-adic numbers}. Since each $l_i$ is minimal, this actually implies $k_i = l_i$. At the same time, $K \subset \pi_1(K) \times \dots \times \pi_n(K)$, therefore \[
		\prod_{i=1}^n p^{l_i} \mathbb{Z}_p \subset K \subset \prod_{i = 1}^n p^{l_i}\mathbb{Z}_p.
	\]
\end{proof}

\section{Pontryagin Dual}

Given a $LCA$ group $G$, its Pontryagin Dual is the set of continuous homomorphisms $G \to \unitcircle$, under the compact-open topology (a neighbourhood basis at the identity is given below). Its elements are sometimes referred to as characters of $G$.

\begin{lemma}
	\label{neighbourhood basis on dual}
	For $\chi_0 \in \dual{G}$, defining \[
		V_{K, \epsilon}(\chi_0) := \{\chi \in \dual{G}: \ \sup_{g \in K} \abs{\chi(g) - \chi_0(g)} < \epsilon\},
	\]
	for $K \subset G$ compact and $\epsilon > 0$, the sets $V_{K, \epsilon}(1)$ form a neighbourhood basis at the identity of $\dual{G}$ as $K$ and $\epsilon$ vary (reminder that the topology on $\dual{G}$ is generated by the sets of the form $L(K, O) = \{\chi \in \dual{G}: \ \chi(K) \subset O\}$, as $K \subset G$ compact and $O \subset \unitcircle$ open vary).
\end{lemma}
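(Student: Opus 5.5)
The claim is that the sets $V_{K,\epsilon}(1)$ form a neighbourhood basis at the identity of $\dual{G}$. I will show two things: each such set is open and contains $1$, and every basic open neighbourhood of $1$ in the compact-open topology contains one of them.

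\textbf{Openness.} Fix a compact $K \subset G$ and $\epsilon > 0$, and take $\chi_0 \in V_{K,\epsilon}(1)$. Continuity of $\chi_0$ and compactness of $K$ give $\delta := \epsilon - \sup_{g\in K}\abs{\chi_0(g)-1} > 0$, because the supremum is attained. Cover the compact set $\chi_0(K) \subset \unitcircle$ by finitely many open arcs $O_1,\dots,O_m$ of diameter less than $\delta/2$. Set $K_j := K \cap \invimage{\chi_0}{\overline{O_j'}}$, where the $O_j'$ are slightly smaller closed arcs that still cover $\chi_0(K)$; each $K_j$ is then compact. The finite intersection $\bigcap_j L(K_j, O_j)$ is open, contains $\chi_0$, and is contained in $V_{K,\epsilon}(1)$. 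Indeed, for $g \in K_j$ we have $\abs{\chi(g)-\chi_0(g)} < \delta/2$, so $\abs{\chi(g)-1} < \epsilon - \delta/2 < \epsilon$. This shows $V_{K,\epsilon}(1)$ is open, and it clearly contains $1$.

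\textbf{Cofinality.} Let $W = \bigcap_{i=1}^n L(K_i,O_i)$ be a basic open set containing $1$. Then $1 \in O_i$ for every $i$, so there is an $\epsilon > 0$ with $\{z : \abs{z-1}<\epsilon\} \subset O_i$ for all $i$. Put $K = \bigcup_i K_i$, which is compact. If $\chi \in V_{K,\epsilon}(1)$, then $\chi(K_i) \subset O_i$ for each $i$, so $V_{K,\epsilon}(1) \subset W$.

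The only step needing care is the openness argument. It relies on the supremum being a strict inequality achieved on a compact set, and on splitting $K$ into compact pieces; everything else is routine.
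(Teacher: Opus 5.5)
Your proof is correct, and its cofinality step is the paper's argument done in one pass: the paper first fits a $V_{K,\epsilon}(1)$ inside each subbasic $L(K,O)$, then combines finitely many of them via $K=\bigcup_i K_i$ and $\epsilon=\min_i\epsilon_i$. The paper simply asserts that $V_{K,\epsilon}(1)$ is an open neighbourhood of $1$; your splitting argument for openness is valid but more than needed, because the supremum of the continuous function $g\mapsto\abs{\chi(g)-1}$ over the compact set $K$ is attained, so $V_{K,\epsilon}(1)=L(K,\{z\in\unitcircle:\abs{z-1}<\epsilon\})$ is itself a subbasic open set.
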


\begin{proof}
	Let $U = L(K, O)$ be a sub-basic neighbourhood of the identity in $\dual{G}$, where $O \subset \unitcircle$ is open and $\emptyset \neq K \subset G$ is compact. Because $1_{\dual{G}} \in L(K, O)$, $1 \in O$ and therefore there exists $\epsilon > 0$ such that $\{z \in \unitcircle: \ \abs{z - 1} < \epsilon\} \subset O$, by $O$ being open. As such, $V_{K, \epsilon}(1) \subset L(K, O)$. \\
	Because $V_{K, \epsilon}(1)$ is also an open neighbourhood of the identity in $\dual{G}$ and is contained in $L(K, O)$, we can conclude that the identity in $\dual{G}$ admits a neighbourhood basis consisting of finite intersections of the sets of the form $V_{K, \epsilon}(1)$. \\
	Let $K_1, \dots, \cup K_n \subset G$ be compact and $\epsilon_1, \dots, \epsilon_n > 0$. \\
	Then if $K = K_1 \cup \dots \cup K_n$ and $\epsilon = \min\{\epsilon_1, \dots, \epsilon_n\}$, \[
		V_{K, \epsilon}(1) \subset V_{K_1, \epsilon_1}(1) \cap \dots \cap V_{K_n, \epsilon_n}(1).
	\]
	Therefore, sets of this form form a neighbourhood basis at $1_{\dual{G}}$ instead of just a subbasis.
	We can take $K$ to vary over compact neighbourhoods of the identity in $G$ because any compact subset of $G$ is contained in a neighbourhood of such type.
\end{proof}

\begin{theorem}{Pontryagin Duality}
	\label{Duality thm}
	\\ The map \begin{align*}
		\eta_G\colon \  & G \to \ \dual{\dual{G}} \\
		                & g \ \mapsto \ ev_G(g)
	\end{align*}
	is an isomorphism of topological groups for any LCA group G. \\
\end{theorem}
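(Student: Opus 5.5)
We need to prove Pontryagin Duality: $\eta_G$ is an isomorphism of topological groups for any LCA group $G$. This is the classical theorem and the paper cites Deitmar--Echterhoff, so the plan follows the standard route rather than anything specific to modules.

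\emph{Continuity and algebraic properties.} First I check that $\eta_G$ is well defined and continuous. For each $g$, evaluation $\chi\mapsto\chi(g)$ is a continuous character of $\dual{G}$. Continuity of $g\mapsto ev_G(g)$ follows from the joint continuity of the evaluation map $G\times\dual{G}\to\unitcircle$ on compact subsets of $\dual{G}$. Concretely, for $K\subset\dual{G}$ compact and $\epsilon>0$, I use the equicontinuity of compact sets of characters (Arzel\`a--Ascoli, or directly the neighbourhood basis of Lemma \ref{neighbourhood basis on dual}). This gives a neighbourhood $U$ of $0$ with $\abs{\chi(u)-1}<\epsilon$ for all $u\in U$ and $\chi\in K$, hence $\eta_G(U)\subset V_{K,\epsilon}(1)$. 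Injectivity of $\eta_G$ is exactly the statement that characters separate points, which I take from the Gelfand--Raikov/Peter--Weyl theory for LCA groups; this is also the fact invoked in Proposition \ref{annihilator isomorphisms}.

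\emph{Reduction to elementary cases.} Next I reduce surjectivity and openness to building blocks. By the 1st Structure Theorem for LCA groups, $G\cong\mathbb{R}^n\times H$ with $H$ containing a compact open subgroup $K$. Duality commutes with finite products, so it suffices to treat $\mathbb{R}$ and $H$. For $\mathbb{R}$ the statement is a direct computation. For $H$, I use the exact sequence $0\to K\to H\to H/K\to0$, where $K$ is compact and $H/K$ is discrete. For these two classes duality is classical: the discrete case is proved via the Peter--Weyl/Plancherel argument or by reduction to finitely generated groups, and the compact case follows by dualizing it. The remaining step is a five-lemma-type argument for topological groups. Here $\dual{H}$ sits in $0\to A(K)\to\dual{H}\to\dual{K}\to0$, with $A(K)\cong\dual{H/K}$ compact and open, using Proposition \ref{annihilator isomorphisms} and Corollary \ref{subgroup - annihilator topological relations}. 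The maps $\eta_K,\eta_{H/K}$ are isomorphisms, so $\eta_H$ is bijective. It is also open, because it restricts to a homeomorphism of the open subgroup $K$ onto the open subgroup $\eta_H(K)=A(A(K))$.

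\emph{Expected obstacle.} The main difficulty is the compact and discrete cases, and in particular surjectivity: showing that every character of $\dual{G}$ is an evaluation. I would handle this through the Plancherel theorem, arguing by contradiction. If $\eta_G(G)$ were a proper closed subgroup, a nontrivial character of $\dual{\dual{G}}$ would annihilate it. That would produce a nonzero $L^2$ function on $\dual{G}$ orthogonal to all Fourier transforms, contradicting Plancherel. Since this is exactly the content cited from Chapter 4 of \cite{DeitmarEchterhoff2014}, in the paper I would simply invoke that reference for these steps.
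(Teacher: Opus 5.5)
The paper gives no proof of this theorem; it quotes it as the classical result. Judged on its own, your argument has a genuine gap: it is circular. The decisive reduction uses the 1st Structure Theorem $G\cong\mathbb{R}^n\times H$. In \cite{DeitmarEchterhoff2014}, which is the paper's source for the structure theorems, and in the usual treatments, that structure theory is \emph{deduced from} Pontryagin duality. One determines a compactly generated $G$ by analysing $\dual{G}$ (it is of Lie type, hence $\cong\mathbb{R}^n\times(\unitcircle)^k\times D$) and transports the answer back through $G\cong\dual{\dual{G}}$. The general case then follows via an open compactly generated subgroup and injectivity of $\mathbb{R}^n$ (cf.\ Lemma~\ref{lemma 4.2.14}), which is exactly the pattern the paper imitates for modules. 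So you are assuming $\eta_G$ is an isomorphism in order to prove it. Avoiding this would need an independent proof of the splitting: Peter--Weyl approximation by Lie groups, a proof that abelian NSS groups are of Lie type, and splitting of extensions of $\mathbb{R}^n$ by compact groups. You supply none of this, and it is harder than duality itself. A milder instance of the same issue is your appeal to Proposition~\ref{annihilator isomorphisms}, whose parts 2 and 3 rest on extending characters from closed subgroups, normally a consequence of duality. Since your $K$ is \emph{open}, you only need that characters of $K$ extend to $H$ by divisibility of $\unitcircle$, with continuity automatic. With that fix, your argument for $H$ (the two exact sequences, the five lemma, openness via $\eta_H(K)=A(A(K))$) is sound.

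To repair the proof, drop the structural reduction and argue directly for arbitrary $G$; this is the standard proof, which in \cite{DeitmarEchterhoff2014} precedes the structure theory. First, apply Fourier inversion to $f=1_V*1_{-V}$, with $V-V$ inside a prescribed neighbourhood of $0$. This shows that the sets $\{x\in G:\ \abs{\chi(x)-1}<\delta\ \text{for all}\ \chi\in C\}$, with $C\subset\dual{G}$ compact, form a neighbourhood basis of $0$. Hence $\eta_G$ is a homeomorphism onto its image, which is then locally compact and so closed. Second, obtain density of the image from the Plancherel and inversion theorems. Your last paragraph points at this, but as written it is muddled. A nontrivial character of $\dual{\dual{G}}$ vanishing on $\eta_G(G)$ lives in the triple dual, and it yields a function on $\dual{G}$ only if you already know $\eta_{\dual{G}}$ is onto. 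Also, the discrete case is not where the difficulty lies. For discrete $D$, the evaluations $ev_d$ separate the points of the compact group $\dual{D}$. By Stone--Weierstrass their span is dense in $C(\dual{D})$, hence in $L^2(\dual{D})$. A character of $\dual{D}$ different from every $ev_d$ would be orthogonal to all of them, which is impossible. Your dualization argument then gives the compact case from Peter--Weyl separation.
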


\begin{corollary}
	\label{immediate consequences}
	\item $G \in LCA$ admits a neighbourhood basis at the identity consisting of sets of the form \[
		\{g \in G: \ \abs{\chi(g) - 1} < \epsilon \ \text{for all} \ \chi \in C\},
	\]
	as $\epsilon > 0$ and $C \subset \dual{G}$ compact vary. This is obtained by combining Lemma \ref{neighbourhood basis on dual} and Theorem \ref{Duality thm}.
\end{corollary}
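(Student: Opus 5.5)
The plan is to obtain the statement by transporting the neighbourhood basis of Lemma \ref{neighbourhood basis on dual} through the Pontryagin map $\eta_G$ of Theorem \ref{Duality thm}.

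First, apply Lemma \ref{neighbourhood basis on dual} to the $LCA$ group $\dual{G}$ instead of $G$. This gives that the sets
\[
V_{C,\epsilon}(1) = \{\Phi \in \dual{\dual{G}} : \ \sup_{\chi \in C} \abs{\Phi(\chi) - 1} < \epsilon\},
\]
with $C \subset \dual{G}$ compact and $\epsilon > 0$, form a neighbourhood basis at the identity of $\dual{\dual{G}}$.

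Next, by Theorem \ref{Duality thm}, $\eta_G$ is an isomorphism of topological groups. In particular it is a homeomorphism sending $0$ to the trivial character. A homeomorphism carries a neighbourhood basis at a point to a neighbourhood basis at the image point, so the sets $\invimage{\eta_G}{V_{C,\epsilon}(1)}$ form a neighbourhood basis at $0$ in $G$. Since $\eta_G(g)(\chi) = \chi(g)$, these preimages are
\[
\invimage{\eta_G}{V_{C,\epsilon}(1)} = \{g \in G : \ \sup_{\chi \in C}\abs{\chi(g) - 1} < \epsilon\}.
\]

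The only point needing care is replacing ``$\sup < \epsilon$'' by ``$\abs{\chi(g) - 1} < \epsilon$ for all $\chi \in C$''. One inclusion is trivial. For the other, fix $g$. The map $\chi \mapsto \abs{\chi(g) - 1}$ is continuous on $\dual{G}$, being the composition of $\eta_G(g)$ with a continuous function. Hence on the compact set $C$ its supremum is attained (for $C \neq \emptyset$; the case $C = \emptyset$ gives all of $G$ under either description). So if every value is $< \epsilon$, the supremum is $< \epsilon$ too, and the two sets coincide. This yields exactly the stated basis.

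The main (mild) obstacle is this attainment-of-supremum step. Everything else is a formal transfer along the isomorphism $\eta_G$.
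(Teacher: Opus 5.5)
Your proposal is correct and is exactly the argument the paper intends: apply Lemma \ref{neighbourhood basis on dual} to $\dual{G}$, then pull the resulting basis back along the topological isomorphism $\eta_G$ of Theorem \ref{Duality thm}. Your extra step, replacing the supremum by the pointwise condition because the supremum is attained on the compact set $C$, is also correct, and it fills in a detail the paper leaves implicit.
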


\subsection{Functoriality for LCA groups}

\begin{definition}
	Let G, G' be locally compact abelian groups and $f\colon G' \to G$ a continuous homomorphism. Its dual morphism is defined as \begin{align*}
		\dual{f}\colon\  & \dual{G} \to \ \dual{G'}   \\
		                 & \chi \mapsto \chi \circ  f
	\end{align*}
	It is well defined since f is a continuous homomorphism.
\end{definition}

\begin{prop}
	\label{dual morphism}
	\hfill \begin{enumerate}
		\item $\dual{f}$ is a continuous homomorphism.
		\item $\ker(\dual{f}) = A(Im f)$ and $\orth{(Im \widehat{f})} = ker f$.
		\item If $f\colon G' \to G$ and $g\colon G \to G''$ are both continuous homomorphisms, then $\widehat{g\circ f} = \widehat{f}\circ \widehat{g}$.
		\item The dual of the identity map in G is the identity on $\dual{G}$.
	\end{enumerate}
\end{prop}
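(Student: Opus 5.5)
We verify the four properties of the dual morphism $\dual{f}$ in turn. The substantive one is continuity in (1); everything else is algebra plus Pontryagin duality.

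\textbf{(1) Continuity.} That $\dual{f}$ is a homomorphism is immediate, since $(\chi\psi)\circ f = (\chi\circ f)(\psi\circ f)$. For continuity, it suffices to check at the identity because $\dual{f}$ is a homomorphism. We use the neighbourhood basis of Lemma \ref{neighbourhood basis on dual}. Given a basic neighbourhood $V_{K,\epsilon}(1)\subset \dual{G'}$ with $K\subset G'$ compact, the set $f(K)\subset G$ is compact by continuity of $f$. One then checks directly that $\dual{f}\bigl(V_{f(K),\epsilon}(1)\bigr)\subset V_{K,\epsilon}(1)$, since
\[
\sup_{g\in K}\abs{\chi(f(g))-1}=\sup_{h\in f(K)}\abs{\chi(h)-1}.
\]

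\textbf{(2) Kernel and orthogonal complement.} For the first identity, $\chi\in\ker\dual{f}$ if and only if $\chi\circ f = 1$, if and only if $\chi$ is trivial on $\operatorname{Im} f$, i.e.\ $\chi\in A(\operatorname{Im} f)$. For the second, $g\in\orth{(\operatorname{Im}\dual{f})}$ if and only if $\chi(f(g))=1$ for all $\chi\in\dual{G}$. Characters of an $LCA$ group separate points (Pontryagin Duality, Theorem \ref{Duality thm}: $\eta_G$ is injective), so this holds if and only if $f(g)=0$, i.e.\ $g\in\ker f$.

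\textbf{(3) and (4) Functoriality.} Both are direct computations. For (3), $\widehat{g\circ f}(\chi)=\chi\circ g\circ f=\dual{f}(\dual{g}(\chi))$. For (4), $\dual{\mathrm{Id}}(\chi)=\chi\circ\mathrm{Id}=\chi$.
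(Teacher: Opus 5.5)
Your proof is correct and follows the paper's argument. For (1), the key step in both is comparing $\sup_{g\in K}\abs{\chi(f(g))-1}$ with the supremum over the compact set $f(K)$; you phrase it through the basic neighbourhoods $V_{f(K),\epsilon}(1)$, whereas the paper uses nets converging uniformly on compacta. Parts (2)--(4) are the same direct computations, except that you state explicitly the point-separation by characters that the paper uses silently.
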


\begin{proof}
	It is easy to see that $\dual{f}$ is a homomorphism. Therefore, we only need to check for continuity at the trivial character.
	\begin{enumerate}
		\item[1.] Let $\seq{\chi}{\alpha}{\Lambda}$ be a generalized sequence of characters of G that converges uniformly in compact sets to the trivial character. If $K \subset G'$ is compact, then \[
			      \sup_{k \in K} \ \abs{\chi_\alpha\circ f(h) - 1} \le \sup_{g \in f(K)} \ \abs{\chi_\alpha(g) - 1}.
		      \]
		      Since $f(K)$ is a compact subset of G, the right-hand side converges to 0. Therefore, $\seq{\dual{f}(\chi_\alpha)}{\alpha}{\Lambda}$ converges uniformly on compact sets to the trivial character and $\dual{f}$ is continuous. \\
		\item[2.] Note that
		      \[\dual{f}(\chi) = 1 \ \iff \chi\circ f = 1 \ \iff \chi|_{Im f} = 1,\]
		      therefore $Ker(\dual{f}) = A(Im f)$. \\
		      Analogously, \[
			      g \in \orth{(Im \widehat{f})} \ \iff \chi (f(g)) = 1 \ \text{for all} \ \chi \in \dual{G} \ \iff f(g) = 0.
		      \]
		\item[3.] For $\psi \in \dual{G''}$, $\widehat{g\circ f}(\psi) = \psi\circ (g \circ f) = (\psi \circ g) \circ f = \widehat{f}(\psi \circ g) = \widehat{f}(\widehat{g}(\psi))$.
	\end{enumerate}
\end{proof}

\begin{corollary}
	\label{relations of dual morphism}
	\hfill \begin{enumerate}
		\item If $f$ is invertible with inverse $\inv{f}$, then $\dual{f}$ is also invertible with inverse $\dual{\inv{f}}$. In particular, if $G, G' \in LCA$ such that $\dual{G} \cong \dual{G'}$, then $G \cong G'$.
		\item If f has dense image, then $\widehat{f}$ is injective.
		\item If f is injective, then $\widehat{f}$ has dense image.
	\end{enumerate}
\end{corollary}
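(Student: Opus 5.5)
The statement to prove is the last Corollary: if $f\colon G' \to G$ is a continuous homomorphism of $LCA$ groups, then (1) $f$ invertible implies $\dual{f}$ invertible with inverse $\dual{\inv{f}}$; (2) $f$ with dense image implies $\dual{f}$ injective; (3) $f$ injective implies $\dual{f}$ has dense image. All three follow from Proposition \ref{dual morphism} together with the annihilator calculus of Subsection \ref{Annihilators and closed subgroups}, so the plan treats the three items in order.

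For item 1, apply parts 3 and 4 of Proposition \ref{dual morphism} to the identities $\inv{f}\circ f = Id_{G'}$ and $f \circ \inv{f} = Id_G$. Contravariance gives $\dual{f}\circ\dual{\inv{f}} = \dual{Id_{G'}} = Id_{\dual{G'}}$ and $\dual{\inv{f}}\circ \dual{f} = Id_{\dual{G}}$. Both duals are continuous by part 1, so $\dual{f}$ is an isomorphism of topological groups. For the \emph{in particular}, an isomorphism $\dual{G}\cong\dual{G'}$ dualizes to $\dual{\dual{G'}} \cong \dual{\dual{G}}$. Composing with the natural isomorphisms $\eta_G$ and $\eta_{G'}$ of Theorem \ref{Duality thm} then gives $G \cong G'$.

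For item 2, part 2 of Proposition \ref{dual morphism} gives $\ker \dual{f} = A(Im f)$. The definition of annihilator notes that $A(S) = A(\closure{<S>})$, so $A(Im f) = A(\closure{Im f}) = A(G)$, which is trivial. Hence $\dual{f}$ is injective.

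For item 3, part 2 of Proposition \ref{dual morphism} gives $\orth{(Im\dual{f})} = \ker f = 0$. Let $H = \closure{Im\dual{f}}$, a closed subgroup of $\dual{G'}$. Since $\orth{X} = \invimage{\eta_{G'}}{A(X)}$, the orthogonal complement of a set equals that of its closed span, so $\orth{H} = 0$. Applying Proposition \ref{annihilator isomorphisms}(2) in $\dual{G'}$, transported through Pontryagin duality, gives $H = A(\orth{H}) = A(0) = \dual{G'}$, so $\dual{f}$ has dense image. This transport is the only step needing care: one must check that $A(\orth{H}) = H$ for a closed $H \le \dual{G'}$. Identifying $\dual{\dual{G'}}$ with $G'$ via $\eta_{G'}$, this is exactly part 2 of Proposition \ref{annihilator isomorphisms} applied to the $LCA$ group $\dual{G'}$ and its closed subgroup $H$. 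Everything else is bookkeeping.
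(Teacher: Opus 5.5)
Your proof is correct and follows the route the paper intends. The paper states this corollary without proof right after Proposition \ref{dual morphism}, and it follows exactly as you argue from parts 2--4 of that proposition, the identity $A(S)=A(\closure{<S>})$, and Proposition \ref{annihilator isomorphisms}(2) applied to the closed subgroup $\closure{Im \dual{f}}$ of $\dual{G'}$ via $\eta_{G'}$. Your reading of ``invertible'' as ``isomorphism of topological groups'' is the right one, since the statement presupposes that $\inv{f}$ is continuous when it forms $\dual{\inv{f}}$.
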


\section{Restricted products}

Arbitrary products of locally compact groups need not remain locally compact, so restricted products are introduced to deal with this issue.

\begin{definition}
	\label{Restricted product definition}
	Let $(G_i)_{i \in I}$ be a family of locally compact groups such that the set $J \subset I$ of indices for which $G_i$ contains a compact open subgroup has finite complement. For $i \in J$, we choose a compact open subgroup $H_i$, and for $i \in I\setminus J$, we take $H_i = G_i$. \\
	The restricted product $\prod_{i \in I}^{'} (G_i, H_i)$ is defined as \[
		\{(g_i)_{i \in I} \in \prod_{i \in I}G_i: \ g_i \in H_i \ \text{for all but a finite number of indices}\}.
	\]
	It is easy to see that this is a subgroup of the usual cartesian product.
	The topology in the restricted product is generated by a neighbourhood basis at the identity given by sets of the form\[
		\prod_{i \in I} U_i,
	\]
	where each $U_i$ is an identity neighbourhood in $G_i$ and  $U_i = H_i$ for all but a finite number of indices. \\
\end{definition}

\begin{prop}
	\label{properties of restricted product}
	Given $(G_i, H_i)_{i \in I}$ as above, let $G$ be the aforementioned restricted product. \begin{itemize}
		\item $G$ is a locally compact topological group.
		\item A subset $Y \subset G$ has compact closure if and only if $Y \subset \prod_{i \in I}K_i$, where each $K_i$ is compact in $G_i$ and $K_i = H_i$ for all but finitely many indices.
		\item The canonical maps $\iota_i\colon G_i \to \prod_{i \in I}^{'}(G_i, H_i)$ are isomorphisms under their images.
		\item If the $G_i$'s are locally compact abelian, then \[
			      \dual{G} \cong \prod_{i \in I}^{'}(\dual{G_i}, K(H_i)),
		      \]
		      where $K(H_i) = \begin{cases*}
				      A(H_i), \ \text{if} \ H_i \ \text{is compact and open} \\
				      \dual{G_i} \ \text{otherwise}.
			      \end{cases*}$
	\end{itemize}
\end{prop}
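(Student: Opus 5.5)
The statement to prove is Proposition \ref{properties of restricted product}, and I will take its four items in order.

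For local compactness, take the basic identity neighbourhood $U_0 = \prod_{i \in I} V_i$, where $V_i = H_i$ for $i \in J$ and $V_i$ is a compact identity neighbourhood of $G_i$ for the finitely many $i \in I\setminus J$. By Tychonoff, $U_0$ is compact in the product topology. The restricted-product topology agrees with the product topology on $\prod_i H_i$, because a basic set $\prod U_i$ intersected with $\prod H_i$ differs from $\prod H_i$ in only finitely many coordinates. Hence $U_0$ is a compact neighbourhood of the identity. Continuity of the group operations follows by checking basic neighbourhoods: translates of a basic set $\prod U_i$ by an element $(g_i)$ are again basic up to finitely many coordinates, since $g_i \in H_i$ for almost all $i$. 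Hausdorffness is inherited from the product.

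For the compact-closure criterion, one direction is immediate: $\prod K_i$ with $K_i = H_i$ for almost all $i$ is compact (again by Tychonoff and the agreement of topologies), so any subset has compact closure. For the converse, cover $\closure{Y}$ by the open sets $y + U_0$ with $y \in \closure{Y}$ and extract a finite subcover $y_1 + U_0, \dots, y_m + U_0$. Each $y_k$ lies in $H_i$ outside a finite set of indices, so for all but finitely many $i$ the $i$-th coordinates of $y_k + U_0$ lie in $H_i$. On the remaining finitely many coordinates, take $K_i = \bigcup_k (y_{k,i} + V_i)$, which is compact.

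For the embeddings, $\iota_i$ sends $g$ to the tuple with $g$ in position $i$ and the identity elsewhere. It is continuous, and the intersection of a basic set $\prod U_j$ with $\iota_i(G_i)$ is $\iota_i(U_i)$. This shows that $\iota_i$ is a homeomorphism onto its image, and the image is closed.

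The duality statement is the main work. Define $\Phi \colon \prod'(\dual{G_i}, K(H_i)) \to \dual{G}$ by $\Phi((\chi_i))(g) = \prod_i \chi_i(g_i)$. This product is finite: for almost all $i$ we have both $g_i \in H_i$ and $\chi_i \in A(H_i)$, so $\chi_i(g_i) = 1$. I would verify the following in order.
\begin{enumerate}
\item $\Phi((\chi_i))$ is continuous, since on $\prod_i H_i$ it is a finite product of continuous characters.
\item $\Phi$ is injective, by restricting to the images $\iota_i(G_i)$.
\item $\Phi$ is surjective. Given $\chi \in \dual{G}$, set $\chi_i = \chi \circ \iota_i$. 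Choose a neighbourhood $W$ of $1 \in \unitcircle$ containing no nontrivial subgroup; then $\invimage{\chi}{W}$ contains a basic set $\prod U_i$ with $U_i = H_i$ for almost all $i$. Hence $\chi(\iota_i(H_i))$ is a subgroup inside $W$, so it is trivial for almost all $i$, giving $(\chi_i) \in \prod'(\dual{G_i}, K(H_i))$. The characters $\chi$ and $\Phi((\chi_i))$ agree on the dense subgroup $\bigoplus_i \iota_i(G_i)$, and therefore they are equal.
\item $\Phi$ is bicontinuous, by comparing neighbourhood bases. Use Lemma \ref{neighbourhood basis on dual} with compact sets of the form $\prod K_i$, as provided by the second item. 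The set $\prod A(H_i)$ corresponds to characters trivial on $\prod H_i$, and the finitely many remaining coordinates are handled by the sets $V_{K_i,\epsilon}$.
\end{enumerate}

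I expect step 4 to be the main obstacle. The key point there is that a character which is close to $1$ on the compact subgroup $\prod_{i \notin F} H_i$ must in fact be trivial on it, by the same no-small-subgroups argument used in step 3.
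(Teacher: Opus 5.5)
Your proposal is correct and follows essentially the standard argument of Proposition 5-1 and Theorem 5-4 of \cite{RamakrishnanValenza1999}, which is all the paper itself gives for this statement. That argument likewise proceeds by local compactness via the open subgroup $\prod_i H_i$ carrying the product topology, compact sets via finitely many translates of $U_0$, and for the dual, the no-small-subgroups argument forcing characters to be trivial on almost all $H_i$, followed by a comparison of compact-open neighbourhoods. One small point to tidy: in the easy direction of the second item, if some $K_i \not\subset H_i$ you need the topologies to agree on $\prod_{i \in F} G_i \times \prod_{i \notin F} H_i$ for a finite $F$, not just on $\prod_i H_i$; this follows by the same argument.
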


\noindent Proofs can be found in Proposition 5-1 and Theorem 5-4 of \cite{RamakrishnanValenza1999}. \\

It is worth noting that if each $G_i$ also admits a topological ring structure, then the restricted product is also a topological ring, by an analogous argument to the one that proves the restricted product is a topological group.

\bibliographystyle{unsrt}
\bibliography{references}

\end{document}